\providecommand{\keywords}[1]
{
  \small	
  \textbf{\textit{Keywords---}} #1
}
\newtheorem{theorem}{Theorem}
\newtheorem{corollary}{Corollary}
\newtheorem{proposition}[theorem]{Proposition}
\newtheorem{lemma}[theorem]{Lemma}
\newtheorem{assumption}{Assumption}
\renewcommand{\le}{\leqslant}
\renewcommand{\ge}{\geqslant}
\renewcommand{\leq}{\leqslant}
\renewcommand{\geq}{\geqslant}
\renewcommand{\tilde}{\widetilde}
\DeclareMathOperator{\var}{Var}
\DeclareMathOperator{\argmin}{argmin}
\DeclareMathOperator{\tr}{tr}
\title{High dimensional regression for regenerative time-series: an application to road traffic modeling} 
\author{Mohammed Bouchouia and Fran\c cois Portier  \\\\
T\'el\'ecom Paris, Institut Polytechnique de Paris}
\date{}
\begin{document}
\maketitle
    \begin{abstract}

      A statistical predictive model in which a high-dimensional time-series regenerates at the end of each day is used to model road traffic.
      Due to the regeneration, prediction is based on a daily modeling using a vector autoregressive model that combines linearly the past observations of the day. Due to the high-dimension, the learning algorithm follows from an $\ell_1$-penalization of the regression coefficients. Excess risk bounds are established under the high-dimensional framework in which the number of road sections goes to infinity with the number of observed days.  Considering floating car data observed in an urban area, the approach is compared to state-of-the-art methods including neural networks. In addition of being highly competitive in terms of prediction, it enables the identification of the most determinant sections of the road network.

    \end{abstract}
    
    \keywords{MSC 2010 subject classifications: 
    Primary 62J05,
    62J07,
    ; secondary 62P30.

    vector autoregressive model,
    Lasso,
    regenerative process,
    road traffic prediction
    }

\section{Introduction}


As the world witnesses the negative effects of traffic congestion, including pollution and economically ineffective transportation \citep{bull2003,pellicer2013global,harrison2011theory}, achieving smart mobility has become one of the leading challenges of emerging cities \citep{washburn2009helping}. In order to set up effective solutions, such as developing intelligent transportation management systems for urban planners, or extending the road network efficiently, smart cities first need to accurately understand road traffic. 
This paper investigates interpretable predictive models estimated from \textit{floating car data}, allowing for the identification of road traffic determinants.


The proposed methodology addresses two important issues regarding \textit{floating car data}. First, in contrast with a traditional \textit{time series} analysis \citep{brockwell+d+f:1991}, the number of vehicles using the network almost vanishes during the night. Hence, in terms of probabilistic dependency, the road traffic between the different days is assumed independent. Such a phenomenon is referred to as ``\textit{regeneration}'' in the Markov chains literature \citep{meyn+t:2012} where the road network ``regenerates'' at the beginning of each new day. Second, the size of the road network, especially in urban areas, can be relatively large compared to the number of observed days. This implies that the algorithms employed must be robust to the well-known \textit{high-dimensional regression} \citep{giraud2014introduction} setting in which the features are numerous.  

For each time $t\in \{0,\ldots,  T\}$ of day $ i \in \{1,\ldots,  n\}$, we denote the vector of speeds registered in the road network by $W_{t}^{(i)}\in \mathbb R ^{p}$. 

 Hence $p$, $n$ and $T+1$ stand for the number of sections in the network, the number of days in the study, and the number of time instants within each day.  
 Inspired by the \textit{time series analysis}, the proposed model is similar to the popular \textit{vector auto-regressive} (VAR) model, as described in the econometric literature \citep{hamilton:1994}. The one difference is that it only applies within each new day due to the regeneration property.

We therefore consider the following linear regression model, called regenerative VAR,
\begin{align}\label{modeling}
L(W_{t}^{(i)}) =   b_t + A W_t^{(i)} ,\qquad   \text{for each }    t\in \{0,\ldots, T-1\},\ i \in \{1,\ldots , n\},
\end{align}
with $A\in \mathbb R^{p\times p}$ and $b_t\in \mathbb R^p$. This model is used to predict the next value $W_{t+1}^{(i)} $. The parameter $A$ encodes for the influence between different road sections, and the parameters $(b_t )_{t=0,\ldots ,T-1}$ account for the daily (seasonal) variations.

The approach taken for estimating the parameters of the regenerative VAR, $(b_t )_{t=0,\ldots ,T-1}$ and $A$, follows from applying ordinary least-squares (OLS) while penalizing the coefficients of $A$ using the $\ell_1$-norm just as in the popular LASSO procedure \citep{tibshirani:1996}; see also \cite{buhlmann2011statistics,giraud2014introduction,hastie+t+w:2015} for reference textbooks.
The estimator is computed by minimizing over $(b_t )_{t=0,\ldots ,T-1}\in \mathbb R ^{p\times T}$ and $A\in \mathbb R ^{p\times p}$ the following objective function
\begin{align}\label{objective}
\sum_{i=1}^n\sum_{t=0}^{T-1}  \| W^{(i)}_{t+1} - b_t - A W^{(i)}_{t} \|_F^2 + 2\lambda \|A\|_1 ,
\end{align}
where $\|A\|_1 = \sum_{  1\leq k,\ell \leq p }   |A_{k,\ell} |$ and $\|\cdot \|_F$ stands for the Frobenius norm.
While the estimation of standard VAR models (without regeneration) has been well-documented for decades \citep{brockwell+d+f:1991}, only recently has penalization been introduced to estimate the model coefficients; see, among others, \citep{valdes+p+s+j+l+a+v+b+m+l+c:2005,wang+l+t:2007,haufe+m+k+n+k:2010,song+b:2011,michailidis2013autoregressive,kock+c:2015,basu+m:2015,baek+d+p:2017}. The aforementioned references advocate for the use of the LASSO or some of its variants in time-series prediction when the dimension of the time series is relatively large. 
Other variable selection approaches in VAR models, but without using the LASSO, are proposed in \cite{davis+z+z:2016}.  

The regeneration property differentiates the mathematical tools used in this paper from the time-series literature mentioned above. The independence between the days allows us to rely on standard results dealing with sums of independent random variables and therefore to build upon results dealing with the LASSO based on independent data \citep{bickel+r+t:2009}. 
From an estimation point of view, two key aspects are related to the regenerative VAR model: (i) the regression output is a high-dimensional vector of size $p$ and (ii) the model is linear with random covariates. This last point makes our study related to random design analyses of the LASSO \citep{bunea+t+w:2007,van+b:2009}. Another issue with regards to the regenerative VAR model is that of model switching, which occurs when the use of two different matrices $A$, each within two different time periods, improve the prediction. 
This kind of variation in the predictive model might simply be caused by changes in the road traffic intensity. Therefore, a key objective with regards to the regenerative VAR model is detecting such change points based on observed data.

From a theoretical standpoint, we adopt an asymptotic framework which captures the nature of usual road traffic data in which $p$ and $n$ are growing to infinity ($p$ might be larger than $n$) whereas $T$ is considered to be fixed.
We first establish a bound on the predictive risk, defined as the normalized prediction error, for the case $\lambda = 0$. This situation corresponds to \textit{ordinary least-squares}. The order is in $p / n$ and therefore deteriorates when the value of $p$ increases. 
Moreover, the conditions for its validity are rather strong because they require the smallest eigenvalue of the Gram matrix to be large enough. Then, under a sparsity assumption on the matrix $A$, claiming that each line of $A$ has a small number of a non-zero coefficient (each section is predicted based on a small number of sections), we study the regularized case when $\lambda >0$. In this case, even when $p$ is much larger than $n$, we obtain a bound of order $1/n$ (up to a logarithmic factor), and the eigenvalue condition is alleviated as it concerns only the eigenvalues restricted to the \textit{active} variables. Finally, these results are used to demonstrate the consistency of a cross-validation change point detection procedure under regime switching.

From a practical perspective, the regenerative VAR, by virtue of its simplicity, contrasts with past approaches, mostly based on \textit{deep neural networks}, that have been used to handle road traffic data. For instance, in \citep{lv2014traffic}, \textit{autoencoders} are used to extract spatial and temporal information from the data before predictions. In \citep{dai2017deeptrend}, \textit{multilayer perceptrons} (MLP) and \textit{long short-term memory} (LSTM)  are combined together to analyze time-variant trends in the data. Finally, LSTM with spatial graph convolution are employed in \citep{lv2018lc,li2017diffusion}.  Three advantages of the proposed approach are the following:

\begin{enumerate}[(i)]
\item It allows for the consideration of very large road networks which include not only main roads (major highways), which is typically the case, but also primary roads. 
The previous neural network methods either use data collected from fixed sensors on the main roads \citep{dai2017deeptrend,lv2014traffic} or, when using floating car data, restrict the network to the main roads ignoring primary roads \citep{epelbaum+g+l+m:2017}. 
Even if this avoids overfitting, as it leaves out a large number of features compared to the sample size, there might be some loss of information in reducing the data to an arbitrary subset. 
\item  The estimated coefficients are easily interpretable thanks to the linear model and the LASSO selection procedure, which shrinks irrelevant sections to zero. This provides data-driven graphical representations of the dependency within the network that could be useful for road maintenance. Once again, this is in contrast with complex deep learning models in which interpretation is known to be difficult. 
\item  Changes in the distribution of road traffic during the day can be handled easily using a regime switching approach; this consists of a simple extension of the initial regenerative VAR proposed in \eqref{modeling} in which the matrix $A$ is allowed to change over time. 
\end{enumerate}

To demonstrate the practical interest of the proposal, the data used is concerned with the urban area of a French city, Rennes, made of $n = 144$ days, $p = 556$ road sections and $T +1 = 20$ time instants (from $3$pm to $8$pm) within each day (see Figure \ref{fig:dataset_snap_shot}). Among all the considered methods, including the classical baseline from the time series analysis as well as the most recent neural network architecture, this is the regime switching model that yields the best performance.

The outline is as follows. In Section \ref{sec:framework}, the probabilistic framework is introduced. The optimal linear predictor is characterized, and the main assumptions are discussed. In Section \ref{sec:results}, we present the main theoretical results of the paper that are bounds on the prediction error of \eqref{objective}. Section \ref{sec:model_switch} investigates the regime switching variant. A comparative study including different methods applied to the real data presented before is proposed in Section \ref{sec:simu}. A simulation study is conducted in Section \ref{sec:simu2}. All the proofs of the stated results are gathered in the Appendix.

\begin{figure}
  \centering
  \begin{subfigure}[b]{0.49\textwidth}\centering
  \includegraphics[width=\textwidth]{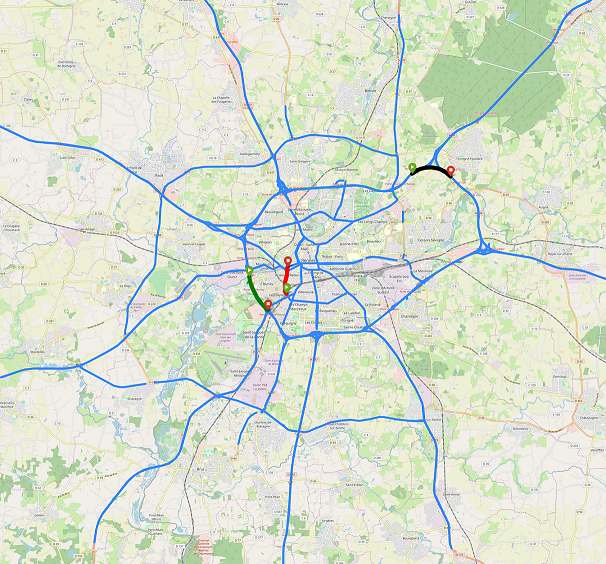}
  \caption{}
  \end{subfigure}
  \begin{subfigure}[b]{0.49\textwidth}\centering\vfill
    \begin{subfigure}[b]{0.43\textwidth}
      \includegraphics[width=1\textwidth]{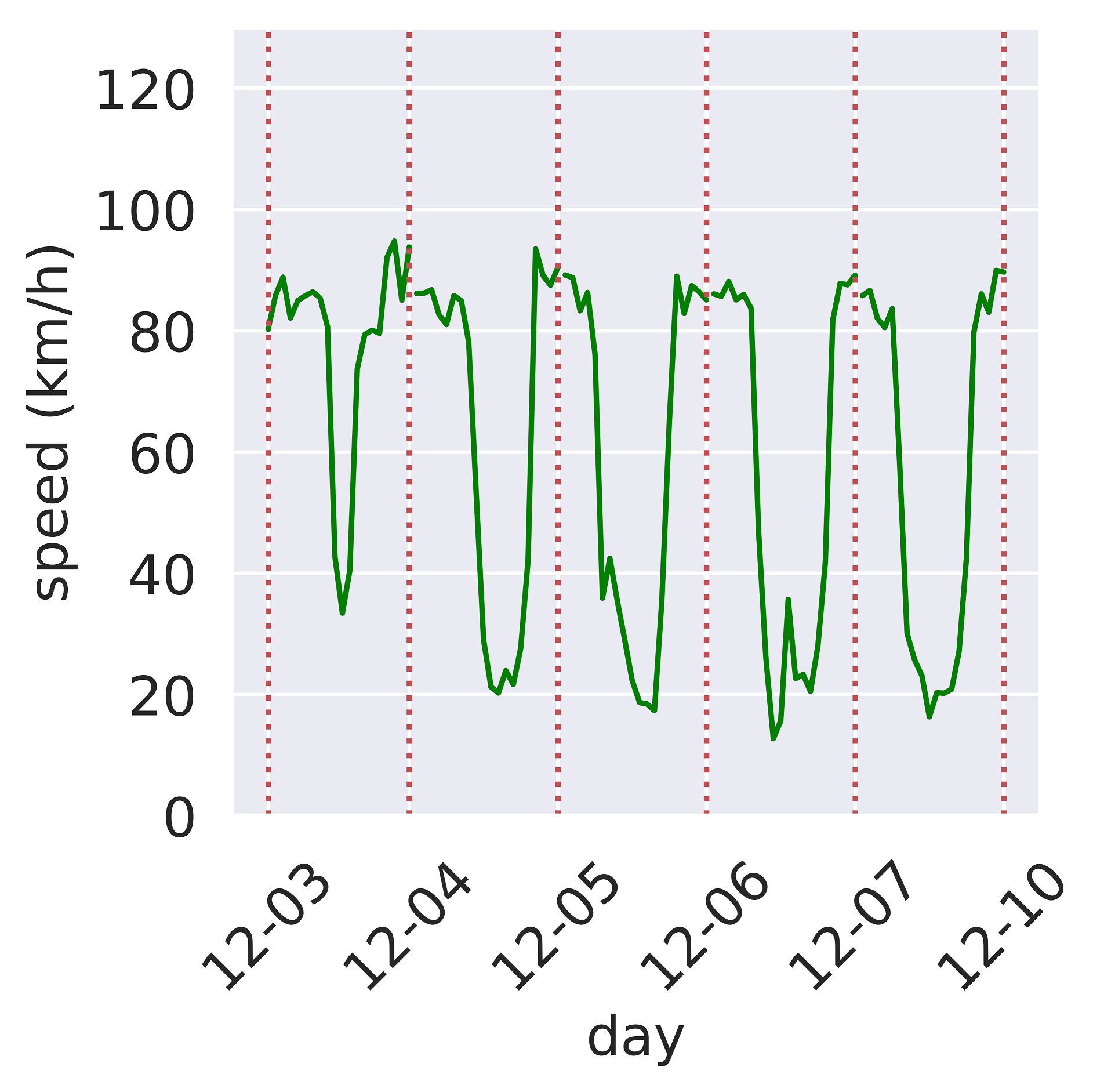}
      \caption{}
    \end{subfigure}
    \hfill
    \begin{subfigure}[b]{0.43\textwidth}
      \includegraphics[width=1\textwidth]{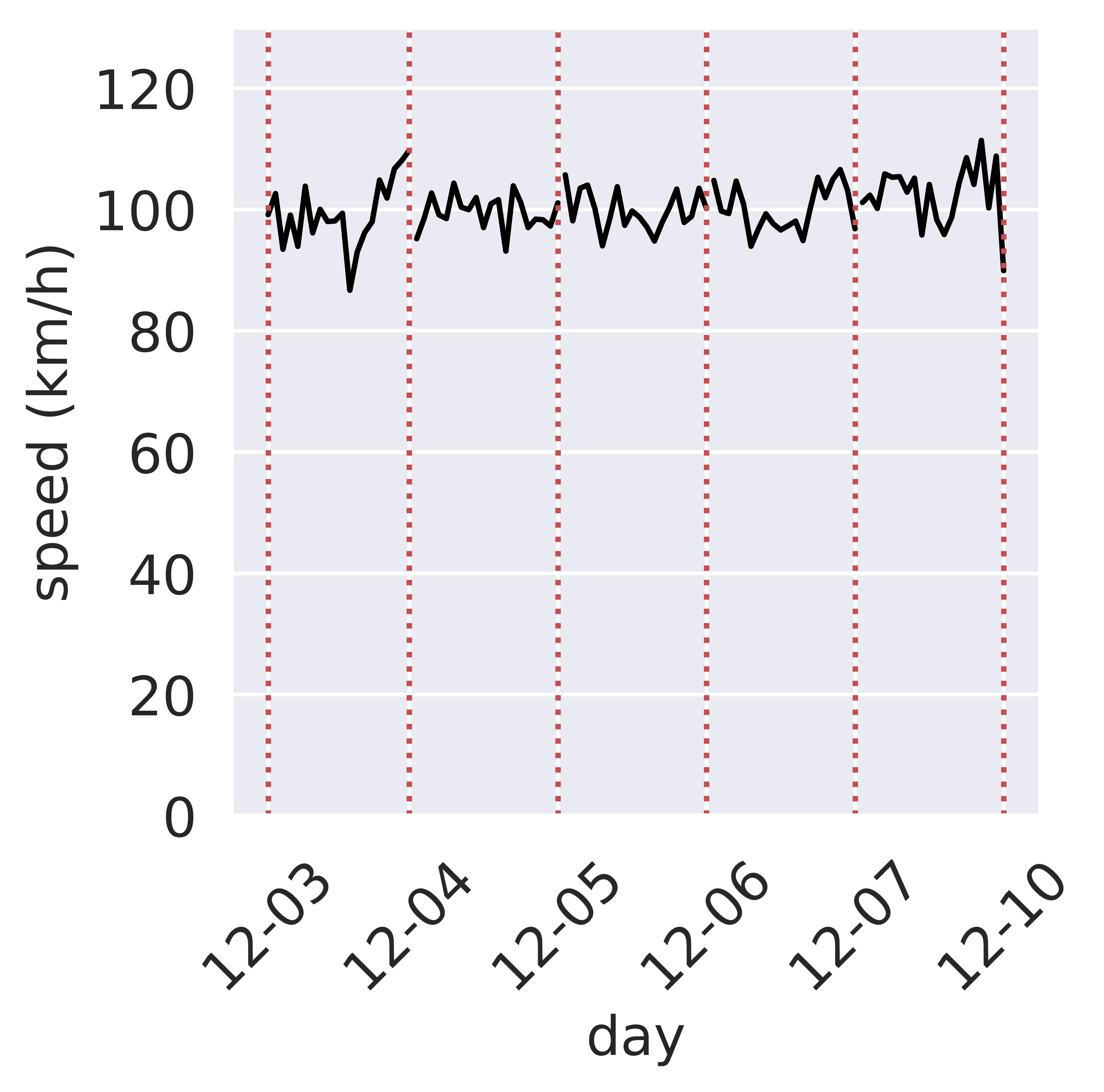}
      \caption{}
    \end{subfigure}
    \vfill
    \begin{subfigure}[b]{0.43\textwidth}
      \includegraphics[width=1\textwidth]{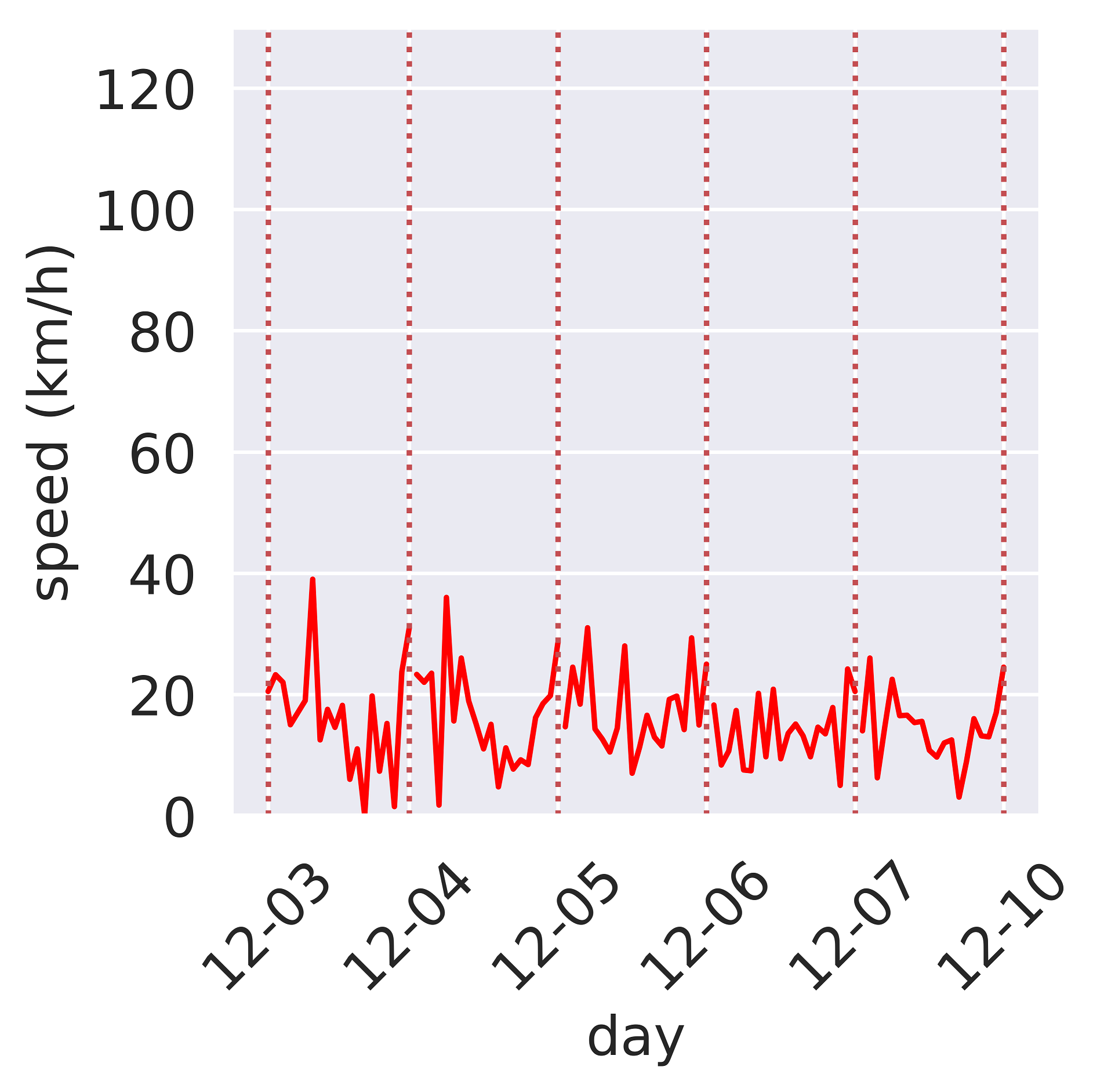}
      \caption{}
    \end{subfigure}
    \hfill
    \begin{subfigure}[b]{0.43\textwidth}
      \includegraphics[width=1\textwidth]{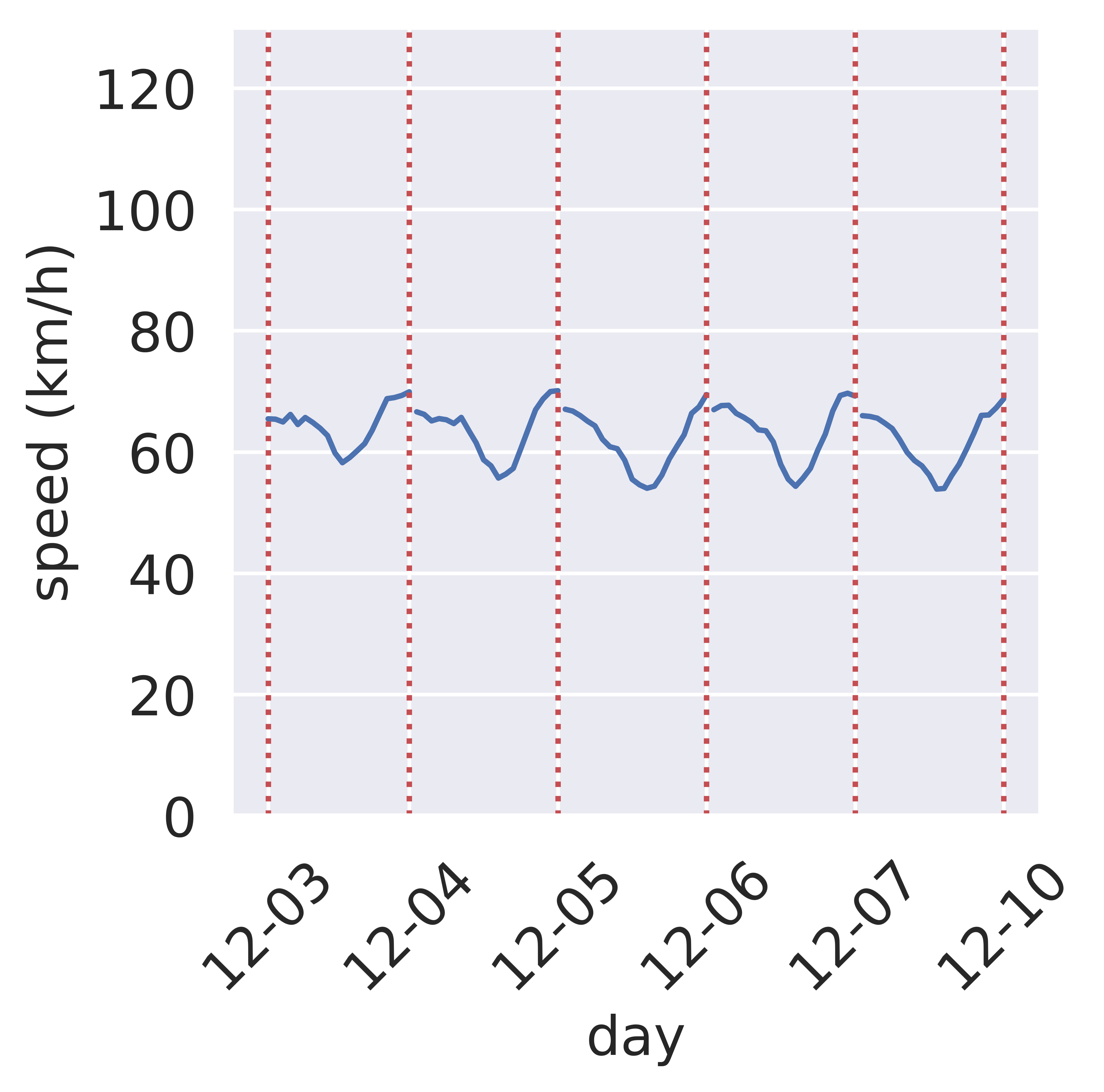}
      \caption{}
    \end{subfigure}
  \end{subfigure}
\caption{(a) Rennes road network is colored in blue and $3$ representative sections are colored in green, black and red; (b) speeds observed during $6$ days of December $2018$ in the green section (cyclic behavior); (c) in the black section (high speed section); (d) in the red section (low speed section); (e) average speed over all sections in the road network during the same $6$ days of December $2018$.}
\label{fig:dataset_snap_shot}
\end{figure}




%
%
%
%

\section{Probabilistic framework}\label{sec:framework}

Let $ \mathcal S = \{1,\ldots, p\} $ denote an index set of $p\geq 1$ real-valued variables that evolve during a time period $\mathcal T=\{0,\ldots, T\}$. These variables are gathered in a matrix of size $p\times (T+1)$ denoted by $W = (W_{k,t})_{k\in \mathcal S,\, t\in \mathcal T }  $. In road traffic data, $k\in \mathcal S$ stands for the section index of the road network $\mathcal S$, $t\in \mathcal T$ for the time instants within the day, and $W_{k,t}$ is the speed recorded at section $k$ and time $t$. We denote by $W_{\{\cdot,t\}} \in \mathbb R^{p}$ the vector of speed at time $t\in \mathcal T$. The following probabilistic framework will be adopted throughout the paper.

\begin{assumption}[probabilistic framework]\label{ass_1}
Let $(\Omega,\mathcal F,P)$ be a probability space. The matrix $W = (W_{\{\cdot,0\}},\ldots, W_{\{\cdot,T\}}) \in \mathbb R^{p\times (T+1)}$ is a random element valued in $\mathbb R^{p\times (T+1)} $ with distribution $P$. For all $k\in \mathcal S  $ and $t \in \mathcal T $, we have $E [W_{k,t}^2]<+\infty$  ($E$ stands for the expectation with respect to $P$). 
\end{assumption}

The task of interest is to predict the state variable at time $t$, $W_{\{\cdot,t\}}$, using the information available at time $t-1$, $W_{\{\cdot,t-1\}}$.  The model at use to predict $W_{\{\cdot,t\}}$ is given by the mapping $ L(W_{\{\cdot,t-1\}} ) = b_t + A W_{\{\cdot,t-1\}}$, where $A\in \mathbb R^{p\times p}$ and $b_t \in \mathbb R^{p}$ are parameters that shall be estimated. Define
\begin{align*}
&Y =  (W_{\{\cdot,1\}},\ldots, W_{\{\cdot,T\}}) \qquad \text{and} \qquad X = (W_{\{\cdot,0\}},\ldots, W_{\{\cdot,T-1\}}).
\end{align*}
 Equipped with this notation, the problem can be expressed as a simple matrix-regression problem with covariate $X\in \mathbb R^{p\times T} $ and output $Y\in \mathbb R^{p\times T} $.
In matrix notation, the model simply writes
\begin{align}\label{eq_linearmodel_matrix}
\mathcal L = \{ L( X ) = b + A X \,:\, b \in \mathbb R^{p\times T},\, A\in \mathbb R^{p\times p} \},
\end{align}
with $ L(X_{\{\cdot,1\}},\ldots, X_{\{\cdot,T\}}) = (L(X_{\{\cdot,1\}}),\ldots, L(X_{\{\cdot,T\}}))$. The number of parameters needed to describe a single element in $\mathcal L$ is $p (p+T)$. Because the matrix $A$ is fixed over the period $\mathcal T$, this model reflects a certain structural belief on the dependency along time between the state variables $W_{\{\cdot,t\}}$. Each element in $\mathcal L$ is defined by a baseline matrix $b\in \mathbb R^{p\times T}$ which carries out the average behavior in the whole network and a matrix  $A \in \mathbb R^{p\times p}$ which encodes for the influence between the different road sections. Alternative models are discussed at the end of the section. The accuracy of a given model $L\in {\mathcal {L}}$ is measured through the (normalized) $L_2(P)$-risk given by
\begin{align*}
R(L) =   p^{-1}  E [\| Y- L(X) \|_F^2 ]  .
\end{align*}
Note that, because of the factor $1/p$, $R(L)$ is just the averaged squared error of $L$. Among the class $\mathcal L$, there is an optimal predictor characterized by the usual normal equations. This is the statement of the next proposition.


\begin{proposition}\label{prop:optimal_matrix}
\label{prop:uniq_min_homo}
Suppose that Assumption \ref{ass_1} is fulfilled. There is a unique minimizer $L^\star \in \argmin_{L\in\mathcal L} R(L)$. Moreover $L = L^\star$ if and only if
$   E [Y - L  (X) ]= 0 $ and $  E[ (  Y-  L  (X) ) X^T ] = 0$.
 \end{proposition}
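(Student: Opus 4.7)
My plan is to cast the problem as an orthogonal projection in the Hilbert space $H = L^2(P;\mathbb{R}^{p\times T})$ equipped with the inner product $\langle U,V\rangle_H = E[\tr(U^T V)]$. Assumption \ref{ass_1} ensures $Y \in H$, and the subset $\mathcal M := \{b+AX : b \in \mathbb{R}^{p\times T},\, A\in \mathbb{R}^{p\times p}\}\subset H$ is a finite-dimensional, hence closed, linear subspace, being the image of the finite-dimensional parameter space under a linear map. Minimizing $R(L)=p^{-1}\|Y-L(X)\|_H^2$ over $\mathcal L$ is then equivalent to projecting $Y$ onto $\mathcal M$, so the Hilbert projection theorem delivers the existence and the $L^2$-uniqueness of the optimal random element $L^\star(X)$. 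Equivalently, $R$ is a convex quadratic form on a finite-dimensional Euclidean space bounded below by zero, so its set of minimizers is the (non-empty) zero locus of its gradient.

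For the characterization, the projection theorem says that $L^\star$ is optimal if and only if $Y - L^\star(X)$ is orthogonal in $H$ to every element of $\mathcal M$. Testing with $b$ arbitrary and $A=0$, the orthogonality condition reduces to $\langle E[Y-L^\star(X)],b\rangle_F = 0$ for every $b\in\mathbb R^{p\times T}$, hence $E[Y-L^\star(X)]=0$. Testing with $b=0$ and $A$ arbitrary, the identity $\langle M,AX\rangle_F = \langle A,MX^T\rangle_F$ rewrites the condition as $\langle A,E[(Y-L^\star(X))X^T]\rangle_F = 0$ for every $A\in\mathbb R^{p\times p}$, hence $E[(Y-L^\star(X))X^T]=0$. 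Conversely, any $L\in\mathcal L$ satisfying both identities is orthogonal by linearity to all of $\mathcal M$, so $L=L^\star$ in $H$. The same two equations come out directly from setting $\partial R/\partial b$ and $\partial R/\partial A$ to zero, which gives an alternative, purely calculus-based derivation.

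The main obstacle I anticipate lies in the uniqueness statement at the level of the parameters $(b,A)$: the projection theorem alone yields only $L^\star(X) = L(X)$ almost surely, which upgrades to uniqueness in $\mathcal L$ exactly when the parameterization $(b,A)\mapsto b+AX$ is injective, equivalently when the centered Gram matrix $\Sigma_{XX}=E[(X-E[X])(X-E[X])^T]$ is non-singular. I would resolve this either by reading ``unique'' as ``unique up to $L^2$-equivalence'' or, in line with the high-dimensional framework adopted later in the paper, by invoking non-degeneracy of $\Sigma_{XX}$, in which case the normal equations admit the closed-form solution $A^\star = \Sigma_{YX}\Sigma_{XX}^{-1}$ and $b^\star = E[Y] - A^\star E[X]$ with $\Sigma_{YX} = E[(Y-E[Y])(X-E[X])^T]$.
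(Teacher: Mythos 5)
Your proof is correct and follows essentially the same route as the paper: both identify the minimization with an orthogonal projection onto the finite-dimensional (hence closed) subspace $\{L(X): L\in\mathcal L\}$ of $L_2(P)$ under the inner product $\tr(E[U^TV])$, invoke the Hilbert projection theorem for existence and uniqueness, and derive the two normal equations by testing the orthogonality condition with $A=0$ and with $b=0$ respectively. Your closing remark on parameter-level identifiability is a sound observation but not needed here, since the paper (like you) reads uniqueness at the level of the predictor $L^\star(X)$ in $L_2(P)$ and defers identification of $A^\star$ to the later invertibility assumption on $\Sigma$.
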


%
%

The following decomposition of the risk underlines the prediction loss associated to the predictor $L$ by comparing it with the best predictor $L^\star  $ in $\mathcal L$. Define the excess risk by
\begin{align*}
\mathcal E (L) =  R(L) - R(L^\star) , \qquad  L\in  {\mathcal L}.
\end{align*}


\begin{proposition}\label{prop:risk_decomp}
Suppose that Assumption \ref{ass_1} is fulfilled. It holds:
\begin{align*}
\forall L\in  {\mathcal L},\qquad & \mathcal E (L) = p^{-1}  E [\| L(X) - L^\star(X)  \|_F^2].
\end{align*}
\end{proposition}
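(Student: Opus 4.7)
The plan is the standard bias--variance style decomposition, adapted to the matrix-regression setting and leveraging the normal equations characterizing $L^\star$ stated in Proposition~\ref{prop:optimal_matrix}.

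First, I would write $Y - L(X) = (Y - L^\star(X)) + (L^\star(X) - L(X))$ and expand the squared Frobenius norm. Setting $R := Y - L^\star(X)$ and $\Delta := L^\star(X) - L(X)$, this gives
\begin{align*}
\|Y - L(X)\|_F^2 = \|R\|_F^2 + 2 \langle R, \Delta \rangle_F + \|\Delta\|_F^2,
\end{align*}
where $\langle M,N\rangle_F = \tr(M^T N)$. Dividing by $p$ and taking expectation, the left-hand side is $R(L)$, the first term on the right is $R(L^\star)$, and the last term is $p^{-1} E[\|L(X) - L^\star(X)\|_F^2]$, which is the target. So everything reduces to showing that the cross term $E[\langle R, \Delta \rangle_F]$ is zero.

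The key step is then to exploit the fact that $\Delta$ is itself an element of the linear model class $\mathcal L$: since both $L$ and $L^\star$ are in $\mathcal L$, one has $\Delta = (b^\star - b) + (A^\star - A)X$ for some $b^\star - b \in \mathbb R^{p\times T}$ and $A^\star - A \in \mathbb R^{p\times p}$. Splitting the inner product accordingly,
\begin{align*}
E[\langle R,\Delta\rangle_F] = E[\tr(R^T(b^\star - b))] + E[\tr(R^T(A^\star - A)X)].
\end{align*}
For the first piece, $\tr(R^T(b^\star - b)) = \sum_{k,t} R_{k,t}(b^\star - b)_{k,t}$, whose expectation vanishes because the normal equation from Proposition~\ref{prop:optimal_matrix} gives $E[R] = 0$. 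For the second piece, the cyclic property of the trace yields $\tr(R^T(A^\star - A)X) = \tr((A^\star - A) X R^T)$, so its expectation is $\tr((A^\star - A)\, E[XR^T])$; but $E[XR^T] = (E[RX^T])^T = 0$, again by the second normal equation. Hence both pieces are zero, and the decomposition reduces exactly to the claimed identity.

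There is no real obstacle here: the statement is essentially an orthogonality-of-residuals argument in Hilbert-space language, and the only minor care needed is to make sure the matrix dimensions line up when invoking the trace cyclicity (so that we really do use $E[RX^T] = 0$ rather than something else). The whole argument should fit in a short displayed computation.
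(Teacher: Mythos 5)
Your proof is correct and follows essentially the same route as the paper: expand the squared Frobenius norm of $(Y-L^\star(X)) + (L^\star(X)-L(X))$ and kill the cross term via the normal equations of Proposition \ref{prop:optimal_matrix}. The only difference is that you spell out the vanishing of the cross term (splitting $\Delta$ into its intercept and $X$-dependent parts and using $E[R]=0$ and $E[RX^T]=0$), whereas the paper leaves that step implicit.
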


In what follows, bounds are established on the excess risk under a certain asymptotic regime where both $n $ and $p := p_n$ go to infinity whereas $T$ remains fixed. Though theoretic, this regime is of practical interest as it permits to analyze cases where the number of sections  $p$ is relatively large (possibly greater than $n$). This implies that the sequence of random variables of interest must be introduced as a triangular array. For each $n\geq 1$, we observe $n$ realizations of $W$: $W^{(1)},\ldots, W^{(n)}$ where for each $i$, $W^{(i)}\in \mathbb R^{p\times (T+1)} $. The following modeling assumption will be the basis to derive theoretical guarantees  on  $\mathcal E (L) $.

\begin{assumption}[daily regeneration] \label{ass_2}
For each $n\geq 1$, $(W^{(i)})_{1\le i\le n}$ is an independent and identically distributed collection of random variables defined on $(\Omega, \mathcal F, P)$.
\end{assumption}

The daily regeneration assumption has two components: the independence between the days and the probability distribution of each day which stays the same. 
The independence is in accordance with the practical use of the road network where at night only a few people use the network so it can regenerate and hence ``forgets'' its past. 
The fact that each day has the same distribution means essentially that only days of similar types are gathered in the data.
For instance, this assumption might not hold when mixing weekend days and workdays. This assumption also implies that the network structure remains unchanged during data collection.

\paragraph{Empirical check of the  daily regeneration assumption}

 

To check the daily regeneration assumption on the real data presented in the introduction, we extract a set of $p(T+1) = 11120$ one-dimensional time series. Each of these is associated to a specific instant in the day $t$ and a specific section $k$, i.e., each time series is given by $(W_{k,t}^{(i)})_{i=1,\ldots, n}$ using the introduced notation. For each series, we run the test presented in \cite{broock1996test} where the null hypothesis is that the time series is independent and identically distributed. 
For most sections, $88\%$, the null hypothesis is not rejected at a nominal level of $95\%$. This clearly validates the daily regeneration assumption.
The share of sections, $12\%$, for which the hypothesis has been rejected, can be explained by the presence of missing values in the data which have been replaced by the average value (see Section \ref{sec:simu} for details). It is also likely that external events such as roadworks and constructions have impacted the result of the test.

\paragraph{Alternative models}

Two alternative modeling approaches might have been considered at the price of additional notation and minor changes in the proofs of the results presented in the next section.
 The first alternative is obtained by imposing the matrix $A$ to be diagonal. In this case, each $1$-dimensional vector coordinates is fitted using an auto-regressive model based on one single lag. The second alternative is to use more than one lag, say $H\geq 1$, to predict the next coming instance. This is done by enlarging the matrix $A$ to $(A_1,\ldots, A_H)$ where each $A_h \in \mathbb R ^{p\times p}$ and by stacking in $X$ the $H$ previous lags. These two variations, though interesting, are not presented in the paper for the sake of readability. Another approach which will be addressed in Section \ref{sec:model_switch} is when the matrix $A$ is allowed to change across time. 

\section{Empirical risk minimization}\label{sec:results}

\subsection{Definitions and first results}\label{common_decomposition}

Given a sequence $(W^{(i)})_{1\leq i\leq n}\subset \mathbb R^{p\times (T+1)}$ and a regularization parameter $\lambda\geq 0$, define the estimate 
\begin{align}\label{initial_pb}
(\hat b, \hat A) \in \argmin _{b \in \mathbb R^{p\times T}, \, A\in \mathbb R^{p\times p}} 
\left(\sum_{i=1}^n  \| Y^{(i)} - b - A  X^{(i)} \|_F^2\right) + 2\lambda \|A\|_1 ,
\end{align}
with 
\begin{align*}
 Y^{(i)}  = (W_{\{\cdot,1\}}^{(i)},\ldots, W_{\{\cdot,T\}}^{(i)})  \qquad \text{and} \qquad  X^{(i)}  = (W_{\{\cdot,0\}}^{(i)},\ldots, W_{\{\cdot,T-1\}}^{(i)}).
\end{align*}
For any vector $u \in \mathbb R^{p}  $ and any integer $q\geq 1$, the $\ell_q$-norm is defined as $\|u\|_q^q  = \sum_ {k=1}^p |u_{k} |^q $. 
The prediction at point $X\in \mathbb R^{p\times T}$ is given by
$$\hat L (X) = \hat b  + \hat A X.$$
As the intercept in classical regression, the matrix parameter $\hat b$ is only a centering term. Indeed, when minimizing \eqref{initial_pb} with respect to $b$ only, we find $ b =n^{-1} \sum_{i=1}^n \{Y^{(i)} - A  X^{(i)} \}$. The value of $\hat A$ can thus be obtained by solving the least-squares problem  \eqref{initial_pb} without intercept and with empirically centered variables.
Given $\hat A$, the matrix $\hat b$ can be recovered using the simple formula $\hat b = n^{-1} \sum_{i=1}^n \{Y^{(i)} - \hat A   X^{(i)} \}$. Consequently, the prediction at point $X$ may be written as
$\hat L  (X) = \overline{Y}^n  + \hat A  (X - \overline{X}^n) $,
where, generically, $ \overline{M}^n = n^{-1} \sum_{i=1}^n  M^{(i)} $.
Similarly, one has $ L^\star (X) = E[Y]  + A^\star (X - E (X) ) $. The previous two expressions emphasize that the excess risk might be decomposed according to $2$ terms: one is dealing with the estimation error on $A^\star$ and one is relative to the error on the averages $E[Y]$ and $E[X]$. We now state this decomposition.

\begin{proposition}\label{first_decomp_ols}
Suppose that Assumption \ref{ass_1} is fulfilled. It holds that
\begin{align*}
\mathcal E (\hat L)  &\leq p^{-1} \{  \| (A^\star -\hat A ) \Sigma^{1/2} \|_F^2 +  2\| E(Y) - \overline Y^n \|_F^2 + 2\| \hat A (E(X) - \overline X^n) \|_F^2\}. 
\end{align*}
where $ \Sigma =  E [ (X-E ( X) )  (X-E ( X) ) ^{T} ]$.
\end{proposition}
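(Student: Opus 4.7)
The plan is to apply Proposition~\ref{prop:risk_decomp}, which reduces everything to controlling $p^{-1}E[\|\hat L(X)-L^\star(X)\|_F^2]$, where the expectation is taken over a fresh test observation independent of the training sample. So the bound to prove is pointwise in the training data, and the strategy is a bias-variance style decomposition of this quantity.

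First, I would use the two representations that follow the proposition statement. For the estimator, the normal equation in $b$ gives $\hat L(X) = \overline{Y}^n + \hat A(X-\overline{X}^n)$. By Proposition~\ref{prop:optimal_matrix}, the condition $E[Y-L^\star(X)]=0$ forces $b^\star = E[Y] - A^\star E[X]$, hence $L^\star(X) = E[Y] + A^\star(X-E[X])$. Subtracting and adding $\hat A E[X]$ to reorganize the cross terms yields the key algebraic identity
\begin{align*}
\hat L(X) - L^\star(X) &= \underbrace{\bigl(\overline Y^n - E[Y]\bigr) + \hat A\bigl(E[X]-\overline X^n\bigr)}_{=:\,U} + \underbrace{(\hat A - A^\star)(X - E[X])}_{=:\,V(X)}.
\end{align*}
Here $U$ depends only on the training sample (so it is deterministic with respect to the test-point expectation), while $V(X)$ has zero mean in $X$ since $E[X-E(X)]=0$.

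Next I would expand the Frobenius norm of $U+V(X)$ and take the test-point expectation. The cross term $\langle U, E[V(X)]\rangle_F$ vanishes by the mean-zero property, so
\begin{align*}
E\bigl[\|\hat L(X) - L^\star(X)\|_F^2\bigr] = \|U\|_F^2 + E\bigl[\|V(X)\|_F^2\bigr].
\end{align*}
For the second term, the trace identity $\|MZ\|_F^2 = \tr(M Z Z^T M^T)$ with $M = \hat A - A^\star$ and $Z = X - E[X]$, combined with $E[ZZ^T] = \Sigma$, gives $E[\|V(X)\|_F^2] = \tr\bigl((\hat A-A^\star)\Sigma(\hat A-A^\star)^T\bigr) = \|(A^\star-\hat A)\Sigma^{1/2}\|_F^2$.

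Finally I would apply the elementary inequality $\|a+b\|_F^2 \leq 2\|a\|_F^2 + 2\|b\|_F^2$ to split $\|U\|_F^2$ into the two training-sample fluctuation terms $2\|E(Y)-\overline Y^n\|_F^2$ and $2\|\hat A(E(X)-\overline X^n)\|_F^2$. Multiplying by $p^{-1}$ gives the stated bound. The proof is essentially algebraic; the only subtle point is being careful that the expectation in Proposition~\ref{prop:risk_decomp} is with respect to an independent copy of $X$, so that $U$ can be pulled out and the cross term indeed vanishes — everything else is routine.
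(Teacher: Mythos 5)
Your proof is correct and follows essentially the same route as the paper: reduce via Proposition \ref{prop:risk_decomp}, split $\hat L(X)-L^\star(X)$ into a training-sample term plus $(\hat A-A^\star)(X-E[X])$, kill the cross term using the mean-zero property of $X-E[X]$, identify the $\Sigma^{1/2}$ term, and finish with $\|a+b\|_F^2\le 2\|a\|_F^2+2\|b\|_F^2$. The only cosmetic difference is that the paper keeps $\hat b$ explicit and substitutes $\hat b=\overline Y^n-\hat A\overline X^n$ at the last step, whereas you substitute it from the start; the argument is the same.
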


The previous proposition will be the key to control the excess risk of both the OLS and LASSO predictors. The OLS (resp. LASSO) estimate, which results from \eqref{initial_pb} when $\lambda = 0$ (resp. $\lambda >0$), is denoted by $(\hat b^{(ols)} ,\, \hat A ^{(ols)} )$ (resp. $(\hat b^{(lasso)} ,\, \hat A ^{(lasso)} )$) and its associated predictor is given by $\hat L^{(ols)}$ (resp. $\hat L^{(lasso)}$).

\subsection{Ordinary least-squares}

In this section, we study the case of the OLS (i.e., when $\lambda = 0$). This will allow to put into perspective the main results of the paper, concerning the LASSO (i.e., when $\lambda > 0$), that will be given in the next section. We now introduce certain assumptions dealing with the distribution of $W^{(i)}$, which depends on $n$ trough $p=p_n$. 
The following one claims that the covariates are uniformly bounded. 
 
\begin{assumption}[bounded variables]\label{ass_4}
With probability $1$, $$\limsup_{n\to \infty} \max _{k\in \mathcal S , \, t\in \mathcal T} | X_{k,t}   | <~\infty. $$
 \end{assumption}


The following invertibility condition on $\Sigma$ can be seen as an identification condition since the matrix $A^\star $ is unique under this hypothesis. 
Denote by $\gamma>0$ the smallest eigenvalue of $\Sigma$. 

\begin{assumption}\label{ass_3}
$\liminf_{n\to \infty}  \gamma >0 $ .
\end{assumption}

 Finally, introduce the noise level $\sigma^2>0$ which consists in a bound on the conditional variance of the residual matrix 
\begin{align*}
\epsilon =\{ Y - L^{\star} (X)\} ,
\end{align*} 
 given the covariates $X$. Formally, $\sigma^2$ is the smallest positive real number such that, with probability $1$,
$ \max _{k\in \mathcal S, \, t\in \mathcal T} E [\epsilon_{k,t}^2 |  X_{\{\cdot,t\}} ]  \leq \sigma^2$. Note that $\sigma^2$ might depend on $p$. The fact that it does not depend on $X$ stresses the homoscedasticity of the regression model. 

\begin{assumption}\label{ass_noise}
$\liminf_{n\to \infty} \sigma >0$.
\end{assumption}

We stress that both previous assumptions on $\gamma$ and $\sigma$ might be alleviated at the price of additional technical assumptions which might be deduced from our proofs.
The following result provides a bound on the excess risk associated to the OLS procedure. The given bound depends explicitly on the quantities of interest $n$ and $p$ as well as on the underlying probabilistic model through  $A^\star$. The asymptotic framework we consider is with respect to $n\to \infty$ and we allow the dimension  $p$ to go to infinity (with a certain restriction). 
A discussion is provided below the proposition.

\begin{proposition}\label{prop:ols}
Suppose that Assumptions \ref{ass_1}, \ref{ass_2}, \ref{ass_4}, \ref{ass_3} and \ref{ass_noise} are fulfilled. Suppose that $n\to \infty$ and $p = p_n \to \infty$ such that $ p\log(p) / n  \to 0$, we have
\begin{align*}
\mathcal E (\hat L^{(ols)}) 
=   O_P \left(  {\frac{ p\sigma^2 +  \| A^\star \|_F^2/p }{n} }  \right) .
\end{align*}

\end{proposition}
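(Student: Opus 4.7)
The plan is to start from the decomposition supplied by Proposition~\ref{first_decomp_ols} and bound its three terms separately. Setting $\tilde X^{(i)} = X^{(i)} - \overline{X}^n$, $\epsilon^{(i)} = Y^{(i)} - L^\star(X^{(i)})$, and $\tilde\epsilon^{(i)} = \epsilon^{(i)} - \overline{\epsilon}^n$, the normal equations that characterize $\hat A^{(ols)}$ yield the closed form $\hat A - A^\star = \hat C_n \hat\Sigma^{-1}$, with $\hat\Sigma = n^{-1}\sum_{i=1}^n \tilde X^{(i)}(\tilde X^{(i)})^T$ and $\hat C_n = n^{-1}\sum_{i=1}^n \tilde\epsilon^{(i)}(\tilde X^{(i)})^T$; these two matrices will be the workhorses of the argument.

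\textbf{Gram matrix and leading term.} I would first control $\|\hat\Sigma - \Sigma\|$. Under Assumption~\ref{ass_4}, each entry of $\hat\Sigma - \Sigma$ is a normalized sum of bounded i.i.d.\ zero-mean random variables, so Hoeffding's inequality combined with a union bound over the $p^2$ entries gives $\|\hat\Sigma - \Sigma\|_F = O_P\bigl(p\sqrt{\log(p)/n}\bigr) = o_P(1)$ under the scaling hypothesis $p\log(p)/n \to 0$; in view of Assumption~\ref{ass_3} this produces $\|\hat\Sigma^{-1}\Sigma^{1/2}\|_{\mathrm{op}} = O_P(1)$. Next, $E[\epsilon X^T] = 0$ by Proposition~\ref{prop:optimal_matrix}, and a direct second-moment calculation using Assumptions~\ref{ass_2}, \ref{ass_4}, and \ref{ass_noise} gives $E\|\hat C_n\|_F^2 \leq C p^2 \sigma^2/n$. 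Combining through $\|(A^\star - \hat A)\Sigma^{1/2}\|_F^2 \leq \|\hat C_n\|_F^2\,\|\hat\Sigma^{-1}\Sigma^{1/2}\|_{\mathrm{op}}^2$ yields $O_P(p^2\sigma^2/n)$, which after dividing by $p$ gives the $p\sigma^2/n$ term of the claim.

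\textbf{Centering terms.} For $\|E(Y) - \overline{Y}^n\|_F^2$, an i.i.d.\ variance computation together with Assumption~\ref{ass_4} gives $O_P(p/n)$, hence $O_P(1/n)$ after normalization by $p$. For $\|\hat A(E(X) - \overline{X}^n)\|_F^2$, I would split $\hat A = A^\star + (\hat A - A^\star)$: the residual piece is bounded using the estimation-error rate from the previous step together with $\|E(X) - \overline{X}^n\|_F^2 = O_P(p/n)$ and is of lower order under the scaling hypothesis, while the leading piece satisfies $E\|A^\star(E(X) - \overline{X}^n)\|_F^2 = n^{-1}\tr\bigl(A^\star \Sigma (A^\star)^T\bigr) \leq C\|A^\star\|_F^2/n$, absorbing the spectral size of $\Sigma$ into the constant. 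After dividing by $p$ this produces the $\|A^\star\|_F^2/(pn)$ term.

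\textbf{Main obstacle.} The principal obstacle is the coupled control of $\hat\Sigma^{-1}$ and $\hat C_n$, which share the randomness of the design: the operator-norm concentration of $\hat\Sigma$ around $\Sigma$ is exactly what forces the scaling regime $p\log(p)/n\to 0$ and relies crucially on Assumption~\ref{ass_3}. Once this inversion step is under control, the remaining work reduces to variance estimates and standard Frobenius/operator-norm inequalities, and the three contributions above combine into the stated rate.
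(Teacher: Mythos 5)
Your overall architecture matches the paper's: both start from Proposition~\ref{first_decomp_ols}, both reduce the problem to controlling the cross term $n^{-1}\sum_i \tilde\epsilon^{(i)}\tilde X^{(i)T}$ (which is $O_P(p\sigma/\sqrt n)$ in Frobenius norm, exactly the paper's Lemma~\ref{lemma:residuals_frob}) and to showing that the empirical Gram matrix is uniformly nondegenerate. The paper works from the basic inequality satisfied by the minimizer rather than the closed form $\hat A - A^\star = \hat C_n\hat\Sigma^{-1}$, but that difference is cosmetic. The centering terms and the splitting of $\hat A = A^\star + (\hat A - A^\star)$ are handled the same way in both arguments (the residual piece picks up a factor $\|\overline Z^n\|_F^2 = O_P(p/n) = o_P(1)$, so it is indeed lower order).

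The genuine gap is in your Gram-matrix step, which you correctly identify as the main obstacle but then resolve with an argument that does not close. Entrywise Hoeffding plus a union bound over the $p^2$ entries gives $\|\hat\Sigma - \Sigma\|_\infty = O_P(\sqrt{\log(p)/n})$, and passing to the Frobenius (or row-sum) norm yields $\|\hat\Sigma-\Sigma\|_F = O_P\bigl(\sqrt{p^2\log(p)/n}\bigr)$. This is $o_P(1)$ only if $p^2\log(p)/n \to 0$, which is strictly stronger than the stated hypothesis $p\log(p)/n\to 0$ (take $p=\sqrt n$: then $p\log(p)/n\to 0$ but $p^2\log(p)/n\to\infty$), so your route proves the proposition only under a more restrictive scaling. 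The paper gets the correct rate by applying a matrix Bernstein inequality (Lemmas~\ref{lemma:eigenvalues} and~\ref{lemma:matrix_bernstein}) to the standardized design $Z^{(i)}=\Sigma^{-1/2}(X^{(i)}-E(X))$, exploiting the spectral bound $\|Z^{(i)}Z^{(i)T}\| \le \tr(Z^{(i)T}Z^{(i)}) \le TpU^2/\gamma =: B$ together with a variance proxy of order $nB$; this gives operator-norm concentration $\|\hat\Pi - I\| = O_P\bigl(\sqrt{p\log(p)/n}\bigr)$, which is exactly what the hypothesis $p\log(p)/n\to 0$ is calibrated to. You need this operator-norm (rather than entrywise-to-Frobenius) concentration to legitimize $\|\hat\Sigma^{-1}\Sigma^{1/2}\|_{\mathrm{op}}=O_P(1)$ under the stated regime; the rest of your plan then goes through.
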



The previous bound on the excess risk is badly affected by the parameter $p$. First of all,  the condition $p\log(p) / n \to 0$ implies that $p\ll n$.
Second, the number of nonzero coefficients in $A$ poorly influences the bound.
When $A^\star  = I$, the second term equals $1/n$ and becomes negligible with respect to $p/n$. In contrast, when $A^\star =(1)_{k,\ell}$, i.e., all the covariates are used to predict each output, it equals $p /n$ and becomes a leading term. In between, we have the situation where each line of $A^\star$ possesses only a few non-zero coefficients. Then the magnitude of this term becomes $ 1 / n$.
In this last case, some benefits are obtained when using the LASSO ($\lambda >0$) instead of the OLS ($\lambda = 0$) as detailed in the next section.


\subsection{Regularized least-squares}


The LASSO approach is introduced to overcome the ``large $p$ small $n$'' difficulties of the OLS previously discussed. In contrast with the OLS, the additional $\|\cdot \|_1$-penalty term shall enforce the estimated matrix $\hat A^{(lasso)} $ to be ``sparse'', i.e., to have only a few non-zero coefficients. 
From a theoretical perspective, this will permit 
to take advantage of any sparsity structure in the matrix $A^\star$ associated to $L^\star$.


Introduce the active set $ S^\star_k$ as the set of non-zero coefficients of the $k$-th line of $A^\star$, i.e., for each $k\in \mathcal S $, 
\begin{align*}
 S^\star_k = \left\{\ell \in \mathcal S  \quad : \quad A_{k,\ell}^\star \neq 0\right\}.
\end{align*}
As stated in the following assumption, the sparsity level of each line of $A^\star $ is assumed to be bounded uniformly over $n$. 

\begin{assumption}\label{sparsity_level}
 $\limsup_{n\to \infty } \max_{k\in \mathcal S}  | S_k^\star| <\infty  $.
\end{assumption}

 The following assumption is a relaxation of Assumption \ref{ass_3} which was needed in the study of the OLS approach. For any set $S\subset \mathcal S$, denote by $ S^c$ its complement in $\mathcal S$, and introduce the cone
\begin{align*}
\mathcal C ( S,\alpha ) =\{ u\in \mathbb R^p\, : \, \|u _ {S^c}\|_1 \leq \alpha \|u_{S} \|_1 \}.
\end{align*} 
Denote by $\gamma^\star $ the smallest nonnegative number such that for all $k\in \mathcal S $, we have
$ \|\Sigma^{1/2} u\|_2^2  \geq \gamma^\star \|u\|_2^2$, $\forall u\in \mathcal C (S_k^\star,3)  $.


\begin{assumption}\label{ass:RE}
 $\liminf_{n\to \infty} \gamma^\star >0$.
\end{assumption}

Similar to Assumption \ref{ass_3}, the previous assumption can be alleviated by allowing the value $\gamma^\star $ to go to $0$ at a certain rate which can be deduced from our proof. We are now in position to give an upper bound on the excess risk for the LASSO model.

\begin{proposition}\label{prop:Lasso}
Suppose that Assumptions \ref{ass_1}, \ref{ass_2}, \ref{ass_4}, \ref{ass_noise}, \ref{sparsity_level} and \ref{ass:RE} are fulfilled. Suppose that $n\to \infty$ and $p = p_n \to \infty$ such that $ \log(p) /n \to 0$, we have
\begin{align*}
\mathcal E (\hat L^{(lasso)}) =   O_P \left(  {\frac{   \log(p)    }{n} }  \right) .
\end{align*}
provided that $\lambda = C\sqrt { n\sigma^2 \log(p)}$, for some constant $C>0$.
\end{proposition}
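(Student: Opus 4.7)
The plan is to combine Proposition~\ref{first_decomp_ols} with a classical Bickel--Ritov--Tsybakov style LASSO oracle inequality, applied row by row. Proposition~\ref{first_decomp_ols} reduces the control of $\mathcal E(\hat L^{(lasso)})$ to three pieces: a ``LASSO prediction'' term $p^{-1}\|(A^\star-\hat A^{(lasso)})\Sigma^{1/2}\|_F^2$, a centering error $p^{-1}\|E(Y)-\overline Y^n\|_F^2$, and a residual term $p^{-1}\|\hat A^{(lasso)}(E(X)-\overline X^n)\|_F^2$. The centering error is $O_P(1/n)$: each of its $pT$ entries has variance of order $1/n$ under Assumptions~\ref{ass_1}--\ref{ass_4}, and the sum of $pT$ such entries divided by $p$ stays $O_P(1/n)$. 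The third term is also $O_P(1/n)$ once a crude spectral-norm control of $\hat A^{(lasso)}$ is obtained from the basic inequality for the LASSO together with the oracle bound established for the first term. The bulk of the work therefore concerns the first term.

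\textbf{Row-wise LASSO analysis.} Because both $\|A\|_1$ and $\|Y^{(i)}-b-AX^{(i)}\|_F^2$ decompose additively across rows of $A$, problem~\eqref{initial_pb} splits into $p$ independent LASSO regressions, one for each row $k\in \mathcal S$, with $n$ i.i.d.\ blocks of length $T$ after empirical centering. Introduce the event
\[
\mathcal A_n = \Bigl\{ \max_{k,\ell \in \mathcal S} \Bigl| \sum_{i=1}^n \sum_{t=0}^{T-1} \widetilde X^{(i)}_{\ell,t}\, \epsilon^{(i)}_{k,t} \Bigr| \le \lambda/2 \Bigr\},
\]
with $\widetilde X^{(i)}_{\ell,t} = X^{(i)}_{\ell,t}-\overline X^n_{\ell,t}$. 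Using the regeneration Assumption~\ref{ass_2} to rewrite each coordinate as a sum of $n$ i.i.d.\ bounded random variables (Assumption~\ref{ass_4}) with conditional variance controlled by $\sigma^2$ (Assumption~\ref{ass_noise}), Bernstein's inequality together with a union bound over the $p^2$ pairs yields $P(\mathcal A_n)\to 1$ for $\lambda = C\sqrt{n\sigma^2\log p}$ with $C$ large enough. In parallel, an analogous entrywise concentration of the empirical Gram matrix transfers the restricted eigenvalue condition of Assumption~\ref{ass:RE} from $\Sigma$ to its empirical counterpart, simultaneously over every cone $\mathcal C(S_k^\star,3)$, with a constant arbitrarily close to $\gamma^\star$. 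On the intersection of these events the basic inequality for the LASSO forces $\hat A^{(lasso)}_{k,\cdot}-A^\star_{k,\cdot}\in \mathcal C(S_k^\star,3)$ and delivers the per-row prediction bound
\[
\bigl\|\Sigma^{1/2}(\hat A^{(lasso)}_{k,\cdot}-A^\star_{k,\cdot})^\top\bigr\|_2^2 \;\lesssim\; \frac{\lambda^2 |S_k^\star|}{n^2 \gamma^\star}.
\]
Summing over $k\in\mathcal S$, invoking Assumption~\ref{sparsity_level} to cap $|S_k^\star|$ uniformly by some $s$, and dividing by $p$ gives $p^{-1}\sum_k \|\Sigma^{1/2}(\hat A^{(lasso)}_{k,\cdot}-A^\star_{k,\cdot})^\top\|_2^2 \lesssim C^2 \sigma^2 s \log(p)/(n\gamma^\star)$, which is the announced $O_P(\log(p)/n)$.

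\textbf{Main obstacle.} The delicate point is not any single inequality but the interplay between the two sources of structure in the data: dependence within a day (the $T$ consecutive time points are correlated) versus independence across days. One cannot directly invoke off-the-shelf LASSO concentration since the $nT$ effective per-row observations are not i.i.d. The clean way out is to systematically group the summands across $t$ within each day so that both the cross-term $\sum_{i,t}\widetilde X^{(i)}_{\ell,t}\epsilon^{(i)}_{k,t}$ and the Gram-matrix deviations become sums of $n$ i.i.d.\ bounded blocks, where Bernstein actually applies. A second subtlety is that the restricted eigenvalue must hold \emph{simultaneously} for all $p$ row-specific cones $\mathcal C(S_k^\star,3)$; this requires a uniform entrywise control of the deviation $\hat\Sigma_n-\Sigma$ and is what pins down the condition $\log(p)/n\to 0$ in the statement. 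Once these two uniform controls are in place, the remainder of the argument is a routine assembly of the standard LASSO template.
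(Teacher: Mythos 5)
Your proposal is correct and follows essentially the same route as the paper: the decomposition of Proposition \ref{first_decomp_ols}, a row-by-row LASSO analysis with the event $\lambda/2 \geq \|\sum_i(\epsilon^{(i)}-\overline\epsilon^n)(X^{(i)}-\overline X^n)^T\|_\infty$ controlled by Bernstein's inequality applied to $n$ i.i.d.\ daily blocks with a union bound over the $p^2$ entries, and a transfer of the restricted eigenvalue condition from $\Sigma$ to $\hat\Sigma$ via entrywise concentration of the Gram matrix combined with the cone condition and Assumption \ref{sparsity_level}. The two subtleties you flag (grouping the $T$ within-day summands into i.i.d.\ blocks, and the uniform control needed for all $p$ cones) are exactly the ones the paper resolves in its Lemmas on $\|\sum_i(\epsilon^{(i)}-\overline\epsilon^n)(X^{(i)}-\overline X^n)^T\|_\infty$ and $\|\hat\Sigma-\Sigma\|_\infty$.
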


The parameter $p$ which influenced badly the bound obtained on the OLS excess risk is here replaced by $\log (p)$. This shows that without any knowledge on the active variables, the LASSO approach enables to recover the accuracy (at the price of a logarithmic factor) of an ``oracle'' OLS estimator that would use only the active variables. Another notable advantage is that the assumptions for the validity of the bound have been reduced to $\log(p) /n \to 0$ compared to the OLS which needed $p \log(p) / n \to 0$.

The LASSO requires to choose the regularization parameter $\lambda$ which controls the number of selected covariates. In the proof,  we follow the classic approach (as presented in \cite{hastie+t+w:2015}) by choosing $\lambda $ as small as possible but larger than a certain empirical sum involving the residuals of the model; see (11) in the Appendix. This explains our choice $\lambda = C\sqrt { n\sigma^2 \log(p)}$. Note that we could have done differently. Since the Frobenius norm writes as the sum of the $\ell_2$-norms of the matrix lines, problem (4) can be expressed trough $p$ standard LASSO sub-problems each having $T$ outputs. This allows to select different values of $\lambda$ in each sub-problem.  In practice, $\lambda$ will be chosen using cross-validation as explained in the simulation section.


%
%

%

\section{Regime switching}\label{sec:model_switch}

Regime switching occurs when the distribution of road traffic changes after a certain time $t^*\in \{ 1,\ldots,  T \} $. For instance, it might happen that a certain matrix $A^\star$ is suitable to model the morning traffic while another matrix is needed to fit conveniently the afternoon behavior.
%
To account for these potential changes in the distribution, we consider a wider predictive model than the one presented in Section \ref{sec:framework}.
The set of predictors to be considered here writes as  $\cup_{t= 1,\ldots, T} \mathcal L_t $ where each $\mathcal L_t$ is given by
\begin{align*}
\{  b +  (A X_{1},\ldots,A X_{t} ,   A' X_{t+1}  \ldots,   A' X_{T}) \, :\, b  \in \mathbb R^{p\times T},  A \in \mathbb R^{p\times p} , \,  A' \in \mathbb R^{p\times p}  \}.
\end{align*}
Each submodel $\mathcal L_t$ corresponds to a regime switching occurring at time $t$.  Note that the case $t = T$ 
corresponds to the model without regime switching, i.e.,  a single matrix $A$ is used for the whole day.
Within each submodel, the optimal predictor is defined as 
\begin{align*}
L^\star_t = \argmin _{L\in {\mathcal {L}_t}} R(L), 
\end{align*}
and satisfies some normal equations (involving $A$ and $A'$) that can be recovered by applying Proposition \ref{prop:optimal_matrix} with a specific range for $\mathcal T$.
The risk associated to a submodel $t$ is then given by $R( L^\star_t)$. 

If the change point $t^\star$ where given, then the situation would be very similar to what has been studied in the previous section except that two predictors would need to be estimated, one for each time range $U^\star= \{1,\ldots, t^\star\}$ and $V^* = \{t^\star+1,\ldots, T\}$. As each predictor would be computed as
\begin{align*}
&  (\hat b_{t^\star} , \hat A_{t^\star})   \in \argmin_{ b\in  \mathbb R^{p\times t^\star },\, A\in \mathbb R^{p\times p} }   \sum_{i=1}^n  \| Y_{\{\cdot,U^{\star} \}}^{(i)}  - b- A  X_{\{\cdot,U^\star \}}^{(i)} \|_F^2 + 2\lambda \|A\| _1 ,\\
&  (\hat b_{t^\star}', \hat A_{t^\star}')  \in  \argmin_{   b\in  \mathbb R^{p\times (T- t^\star )},\, A\in \mathbb R^{p\times p} }   \sum_{i = 1}^n  \| Y_{\{\cdot, V^{\star} \}}^{(i)}   -b-   A  X_{\{\cdot, {V^{\star }} \}}^{(i)} \|_F^2 + 2\lambda \|A\| _1 ,
\end{align*} 
the results obtained in the previous section would apply to both estimates separately and one would easily derive an excess risk bound when the change point $t^\star$ is known. The key point here is thus to recover the change point $t^*$ by some procedure. Next, we study a model selection approach based on cross-validation to estimate $t^\star$. 


Suppose that $T\geq 2$. For each $t\in \{1,\ldots, T\}$, define $U_t= \{1,\ldots, t\}$, ${V_t} = \{t+1,\ldots, T\}$ and let $\mathcal F= \{I_1,\ldots, I_K\}$ be a partition of $\{1,\ldots, n \}$ whose elements $I_k$ are called folds. We suppose further that each fold contains the same number of observation $n/K$. For each fold $I\in \mathcal F$, define
\begin{align*}
&  (\hat b _{J,t}, \hat A _{J,t} )    \in \argmin_{  b\in  \mathbb R^{p\times t },\,  A\in \mathbb R^{p\times p} }   \sum_{i\in J}  \| Y_{\{\cdot,U_t\}}^{(i)}  - b-  A  X_{\{\cdot,U_t\}}^{(i)} \|_F^2 + 2\lambda \|A\| _1 ,\\
&  (\hat b _{J,t}', \hat A _{J,t}' )   \in  \argmin_{ b\in  \mathbb R^{p\times (T- t)},\,   A\in \mathbb R^{p\times p} }   \sum_{i\in J}  \| Y_{\{\cdot, {V_t} \}}^{(i)}  -b  -  A  X_{\{\cdot, {V_t} \}}^{(i)} \|_F^2 + 2\lambda \|A\| _1 ,
\end{align*} 
where $J$ is the complement of $ I $ in $\{1,\ldots, n\}$. To lighten the notations, the superscript $(Lasso)$ will be from now on avoided. The estimate of the risk based on the fold $I$ is defined as
\begin{align*}
&\hat R _{I,t} = \frac{K}{ np  }   \sum_{i\in  I }  \{ \| Y_{\{\cdot,U_t\}}^{(i)}   -  \hat b_{J,t} -  \hat A_{J,t}   X_{\{\cdot,U_t\}}^{(i)} \|_F^2 + \| Y_{\{\cdot,V_t\}}^{(i)}   -  \hat b_{J,t}' - \hat A_{J,t}'   X_{\{\cdot,V_t\}}^{(i)} \|_F^2 \} .
\end{align*}
  The resulting cross-validation estimate of  $R_t$ is then the average over the folds $I\in \mathcal F$ of the risks $\hat R _{I,t}$ and the estimated  change point $\hat t$ is the one having the smallest estimated risk, i.e.,
\begin{align*}
&\hat t \in \argmin _ {t = 1,\ldots,  T }  \hat R _{t} :=  \frac{1}{ K }   \sum_{I \in \mathcal  F }  \hat R _{I,t}    .
\end{align*}
This $\hat t$ represents the best instant from which a different (linear) model shall be used.
The following proposition provides a rate of convergence for the cross-validation risk estimate.

\begin{proposition}\label{prop:regime_switch1}
Under the assumptions of Proposition \ref{prop:Lasso}, we have for all $t\in \{1,\ldots T-1\}$,
\begin{align*}
  |\hat R _{t} - R( L^\star_t) |  = O_P \left(  \sqrt {  \frac{ \log(p) }  {n }  }   \right).
\end{align*}
\end{proposition}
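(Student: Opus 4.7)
Fix $t\in\{1,\ldots,T-1\}$ and a fold $I\in\mathcal F$ with complement $J$. My strategy is a standard cross-validation decomposition: the cross-validated risk on fold $I$ differs from $R(L^\star_t)$ through (i) an \emph{estimation} gap (excess risk of the LASSO fit built on $J$) and (ii) an \emph{evaluation} gap (deviation of an empirical average on the held-out fold from its conditional mean). Introduce the out-of-sample (conditional) risk
\begin{align*}
\tilde R_{J,t} = p^{-1} E\bigl[\|Y_{\{\cdot,U_t\}}-\hat b_{J,t}-\hat A_{J,t}X_{\{\cdot,U_t\}}\|_F^2 + \|Y_{\{\cdot,V_t\}}-\hat b'_{J,t}-\hat A'_{J,t}X_{\{\cdot,V_t\}}\|_F^2 \,\big|\, \mathcal G_J\bigr],
\end{align*}
where $\mathcal G_J$ is the $\sigma$-algebra generated by $(W^{(i)})_{i\in J}$. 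Then write $\hat R_{I,t}-R(L^\star_t)=[\hat R_{I,t}-\tilde R_{J,t}]+[\tilde R_{J,t}-R(L^\star_t)]$ and bound both pieces.

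\textbf{Step 1 (bias term).} The plug-in predictor associated with $(\hat b_{J,t},\hat A_{J,t})$ on $U_t$ and $(\hat b'_{J,t},\hat A'_{J,t})$ on $V_t$ belongs to $\mathcal L_t$, so by Proposition \ref{prop:risk_decomp} specialized to $\mathcal L_t$, $\tilde R_{J,t}-R(L^\star_t)$ equals the expected squared prediction deviation from $L^\star_t$. The sub-problems on $U_t$ and on $V_t$ are each instances of the regression problem analyzed in Section \ref{sec:results}: Assumptions \ref{ass_1}--\ref{ass_2}, \ref{ass_4}, \ref{ass_noise}, \ref{sparsity_level}, \ref{ass:RE} transfer without change (time ranges are shorter but fixed), and $|J|=n(K-1)/K$ is of order $n$, so Proposition \ref{prop:Lasso} applied to each sub-problem yields $\tilde R_{J,t}-R(L^\star_t)=O_P(\log(p)/n)$.

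\textbf{Step 2 (variance term).} Conditionally on $\mathcal G_J$, the fitted predictors are deterministic and $\hat R_{I,t}$ is an average of $|I|=n/K$ i.i.d.\ terms whose conditional mean is $\tilde R_{J,t}$. By Assumption \ref{ass_4}, the covariates $X^{(i)}$ and outputs $Y^{(i)}$ are uniformly bounded; and the basic LASSO inequality combined with Assumptions \ref{ass_4} and \ref{sparsity_level} gives $\|\hat A_{J,t}\|_1,\|\hat A'_{J,t}\|_1=O_P(1)$ (bounding $\|A^\star\|_1$ via sparsity plus boundedness, and absorbing the residual term through the choice of $\lambda$). Thus the quadratic summands defining $\hat R_{I,t}$ are bounded on an event of probability tending to one, and conditional Hoeffding yields $\hat R_{I,t}-\tilde R_{J,t}=O_P(1/\sqrt n)$.

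\textbf{Step 3 (aggregation).} Summing both contributions per fold and averaging over the $K$ folds preserves the rates (a fixed finite sum), giving $|\hat R_t-R(L^\star_t)| = O_P(1/\sqrt n)+O_P(\log(p)/n)$. Under the regime $\log(p)/n\to 0$ and $p\to\infty$, both of these are dominated by $\sqrt{\log(p)/n}$ (since $\log(p)\geq 1$ and $\log(p)/n\leq\sqrt{\log(p)/n}$), which is the announced rate.

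\textbf{Main obstacle.} The delicate part is Step 2: one needs the LASSO estimator to be bounded (not merely close to $A^\star$ in prediction norm) so that the conditional concentration step controls unbounded prediction errors. I would handle this by extracting $\|\hat A_{J,t}\|_1\leq\|A^\star\|_1+o_P(1)$ from the basic inequality, using the event on which the LASSO bound of Proposition \ref{prop:Lasso} holds, and working on this high-probability event throughout; the residual probability feeds into the $O_P$ notation without affecting the rate.
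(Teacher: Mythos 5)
Your proof is correct, and its top-level structure coincides with the paper's: the fold-wise split into an out-of-sample excess-risk term (your $\tilde R_{J,t}-R(L^\star_t)$, the paper's $\hat\delta_2$), controlled by Proposition \ref{prop:Lasso} applied to the training fold, plus a held-out evaluation term (your $\hat R_{I,t}-\tilde R_{J,t}$, the paper's $\hat\delta_1$). Where you genuinely diverge is in the treatment of that second term, which the paper identifies as the main difficulty. The paper expands $\hat\delta_1$ algebraically into an $\epsilon$-concentration term plus four cross terms involving $\hat\Delta_J$ and various empirical means, and controls these via a dedicated $\ell_1$-error bound on the LASSO estimate (Proposition \ref{prop:l1_bound}, rate $\sqrt{\log(p)/n}$) together with sup-norm concentration of the empirical means (Proposition \ref{prop:other_bound}) and the inequality $|\tr(AB)|\le\|A\|_\infty\|B\|_1$; even the leading $\epsilon$ term is handled by a union bound over the $p$ coordinates, yielding $\sqrt{\log(p)/n}$. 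You instead observe that the per-day normalized loss $p^{-1}\|Y^{(i)}-\hat b_J-\hat A_JX^{(i)}\|_F^2$ is uniformly bounded on a $\mathcal G_J$-measurable high-probability event, and apply a single conditional Hoeffding inequality, getting the sharper $O_P(1/\sqrt n)$ for the whole evaluation term without ever needing the $\ell_1$ rate on $\hat\Delta_J$ — a crude $\max_k\|\hat A_{J,\{k,\cdot\}}\|_1=O_P(1)$ suffices. This is more elementary and arguably cleaner; what the paper's finer expansion buys is explicit information on which contributions dominate and the reusable $\ell_1$ bound of Proposition \ref{prop:l1_bound}. The one point you should flesh out is the claim $\|\hat A_{J,\{k,\cdot\}}\|_1=O_P(1)$: sparsity alone does not bound $\|A^\star_{\{k,\cdot\}}\|_1$, since Assumption \ref{sparsity_level} controls only the support size; you additionally need $\|A^\star_{\{k,\cdot\}}\|_2=O(1)$, which follows from Assumption \ref{ass:RE} together with $A^\star_{\{k,\cdot\}}\Sigma A^{\star T}_{\{k,\cdot\}}\le\sum_t\var(Y_{k,t})=O(1)$ (a fact the paper also uses implicitly when it treats $\max_k\|\hat A_{J,k}\|_1$ as $O_P(1)$). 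With that supplied, your argument closes and delivers the stated rate, since $1/\sqrt n$ and $\log(p)/n$ are both $O(\sqrt{\log(p)/n})$.
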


The difficulty in proving the previous result is to suitably control for the large dimension in the decomposition of the error. This is done by relying on a bound on the $\ell_1$-error associated to the matrix estimate $\hat A_n ^{(lasso)}$; see Proposition \ref{prop:l1_bound} in the Appendix. 

Applying the previous proposition allows to establish the consistency of the cross-validation detection procedure.

\begin{corollary}\label{cor:regime_switch2}
Under the assumptions of Proposition \ref{prop:Lasso}, suppose there exists $t^*\in \{1,\ldots T-1\}$ such that $\limsup_{n\to \infty} R( L^\star_{t^\star} )  < \liminf_{n\to \infty} R( L^\star_t) $ for all $t\neq t^*$, then  $$P (\hat t = t^* ) \to 1 .$$
\end{corollary}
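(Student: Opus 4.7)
The plan is a direct deduction from Proposition \ref{prop:regime_switch1} together with the separation condition in the hypothesis. Since $T$ is fixed (it does not grow with $n$), it suffices to work termwise over the finite index set $\{1,\ldots,T\}$ and then take a union bound.

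First, I would use the hypothesis $\limsup_{n\to\infty} R(L^\star_{t^\star}) < \liminf_{n\to\infty} R(L^\star_t)$ for each $t\ne t^\star$ to extract, for $n$ large enough, a strictly positive gap. Concretely, for each $t\ne t^\star$ there exists $\delta_t>0$ and $n_t$ such that for all $n\ge n_t$,
\begin{align*}
R(L^\star_t) - R(L^\star_{t^\star}) \ge 2\delta_t.
\end{align*}
Because $\{1,\ldots,T\}\setminus\{t^\star\}$ is finite, setting $\delta=\min_{t\ne t^\star}\delta_t>0$ and $n_0=\max_{t\ne t^\star} n_t$, I get a uniform gap $R(L^\star_t) - R(L^\star_{t^\star}) \ge 2\delta$ valid for all $n\ge n_0$ and all $t\ne t^\star$.

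Next I would decompose, for each $t\ne t^\star$,
\begin{align*}
\hat R_t - \hat R_{t^\star} = \bigl(R(L^\star_t) - R(L^\star_{t^\star})\bigr) + \bigl(\hat R_t - R(L^\star_t)\bigr) - \bigl(\hat R_{t^\star} - R(L^\star_{t^\star})\bigr).
\end{align*}
By Proposition \ref{prop:regime_switch1}, both centered terms on the right are $O_P(\sqrt{\log(p)/n})$, hence go to $0$ in probability as $n\to\infty$ (since $\log(p)/n\to 0$ is part of the assumptions of Proposition \ref{prop:Lasso}). Applying the union bound over the finite set $\{1,\ldots,T\}$, for every $\eta>0$,
\begin{align*}
P\Bigl(\max_{t=1,\ldots,T}|\hat R_t - R(L^\star_t)| > \eta\Bigr) \to 0.
\end{align*}
On the event $\{\max_t |\hat R_t - R(L^\star_t)|\le \delta/2\}$ and for $n\ge n_0$, the display above yields $\hat R_t - \hat R_{t^\star} \ge 2\delta - \delta = \delta > 0$ simultaneously for every $t\ne t^\star$, which forces $\hat t = t^\star$. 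Hence $P(\hat t=t^\star)\to 1$.

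There is essentially no obstacle here beyond bookkeeping: everything is driven by Proposition \ref{prop:regime_switch1} and the finiteness of $T$, which jointly turn consistency into a deterministic argument on a high-probability event. The only mild subtlety is that the hypothesis is stated with $\limsup/\liminf$ (to accommodate the fact that $R(L^\star_t)$ itself depends on $n$ via $p=p_n$); this is handled cleanly by passing to a uniform positive gap for $n\ge n_0$, as above.
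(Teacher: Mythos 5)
Your proposal is correct and follows essentially the same route as the paper: both reduce the claim to the uniform convergence $\max_{t}|\hat R_t - R(L^\star_t)| = o_P(1)$ supplied by Proposition \ref{prop:regime_switch1} (via a union bound over the finite index set) and then conclude by a deterministic gap argument on the high-probability event. Your version merely makes the extraction of the uniform gap $2\delta$ from the $\limsup/\liminf$ hypothesis more explicit, which is a harmless elaboration of the paper's one-line statement that $R_{t^\star} - \min_{t\neq t^\star} R_t < -\epsilon < 0$ for $n$ large.
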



Interestingly, the previous regime switching detection can be iterated in a greedy procedure in which, at each step,  we decide (based on cross-validation) if a regime switching is beneficial. If it does, we continue. If it does not, we stop. The study of such an iterative algorithm represents an interesting avenue for further research. 



%

\section{Real data analysis}\label{sec:simu}
This section provides a comparative study of different methods. First, real world data is presented followed by the different methods in competition and an analysis of the obtained results. Finally, graphical interpretations of the proposed LASSO approach are provided. A GitHub repository is available\footnote{\url{https://github.com/mohammedLamine/RS-Lasso}}. 


\subsection{Dataset}
\label{Dataset}

The initial dataset contains the speed (km/h) and the location of each car using the Coyote navigation system (Floating car data). Data was collected in the Rennes road network from December 1, 2018 until July 9, $2019$ every $30$ seconds. Signals were received from $113577$ vehicle.

Some pre-processing is carried out to correct sensor errors and structure the data in a convenient format. First, the received locations are GPS coordinates whereas the model introduced here is based on the road section. Accordingly, we map-match the locations to the OpenStreetMap road network dataset \citep{greenfeld2002matching} in order to obtain the section of each car location data. Second, to obtain data representing the flow in the sections of the network, we aggregate and average the observed speed values of cars over $15$-minute intervals for each section. Third, for some sections and time intervals, no value is observed. We therefore impute these values using the historical average associated to the time intervals and sections. To lower the number of missing values, we focus on a particularly busy time period from $3$pm to $8$pm (local time) of each workday. It is worth noting that this dataset comes from a low-frequency collection, meaning that for a given time and section, the average number of observations on the selected sections of the network is about $6$ logs per $15$-minutes per section. This is due to the fact that we only observe a portion of the vehicles in the network (those equipped with Coyote GPS devices), and that suburban sections have low traffic. Thus, aiming for even higher precision (less than $15$ minutes) would increase the number of missing values. 

In summary, we consider only the $15$-minute time intervals in the period from $3$pm to $8$pm (local time) of each workday only. The resulting dataset corresponds to our $W^{(i)}$, $i=1,\ldots, n$, matrices where $p=556$, $n=144$ and $T + 1=20$ (Figure \ref{fig:dataset_snap_shot}). 
We divide the data into $3$ subsets: train $63$\% ($1710$ examples), validation $27$\% ($741$ examples), test $10$\% ($285$ examples), splits are made such that days are not cut in the middle and all subsets contains sequential full days data. 

\subsection{Comparative study}

\subsubsection{Methods}

The methods used in this study can be divided into two groups reflecting the information that is used to predict future values. 
In the first group, baselines are used to predict the speed using only data from the same section. 
The second group uses data collected from the entire network. This includes the linear predictors that have been previously studied and also certain neural network predictors that we introduce for the sake of completeness.

\subsubsection*{Baselines}
\begin{itemize}
  \item \textit{Historical average} (HA): for each section $k$ and time interval $t$, the prediction is given by the average speed at time interval $t$ and section $k$. The averaged speed is computed on the training dataset.
  \item \textit{Previous observation} (PO): for each section $k$ and time interval $t$, the prediction is given by the observed speed at $t-1$ and section $k$.
  \item \textit{Autoregressive} (AR): 
 for each section $k$ and time interval $t$, the prediction is given by a linear combination of the $t_0$ previous observed time speed $t-1,\ldots, t-t_0$ in section $k$. The coefficients are estimated using \textit{ordinary least-squares} on the training dataset. In our approach, we use the implementation from the "Statsmodels" package and vary the order $t_0$ from $1$ to $5$.
\end{itemize}

\subsubsection*{Linear predictors}
\begin{itemize}
  \item \textit{Ordinary least-squares} (OLS): we first compute the linear predictor for each section separately. This corresponds to independently solving $p$ sub-problems using OLS. Then we stack the solutions $(\beta_k)_{1\leq k\leq p} \subset \mathbb R^{p}$ to form the solution $A^T = (\beta_1,\ldots \beta_p)  \in \mathbb R^{p\times p}$ of the main problem.
  \item \textit{LASSO}: We compute the LASSO predictor in the same way as the OLS by independently solving $p$ sub-problems. For each sub-problem, we select the regularization coefficient $\lambda$ using the $5$-fold cross-validation from the \texttt{sklearn} package on our training data. Taking the same value for $\lambda$ in all sub-problems (as is case in the theoretical analysis) does not have an impact on the results. 
  \item \textit{Time-specific LASSO} (TS-LASSO): We subdivide the problem into smaller time-specific problems that we solve separately. The algorithm finds a different coefficients matrix $A$ for each time $t \in \{1,\ldots, T\}$. We compute each of these predictors in the same way as the LASSO.  
  \item \textit{Regime switching LASSO} (RS-LASSO): 
This approach is introduced in Section \ref{sec:model_switch} and consists of searching for the best time instant $\hat t$ from which we should use another matrix $A$ for predictions. For each $t \in\{1,\ldots, T\}$, we run a first LASSO procedure over the time frame $\{1, \dots, t\}$ and another over $\{t+1, \dots, T\}$. For each $t$, the resulting risk is estimated using cross-validation and finally, we find $\hat t$ as the time change having the smallest estimated risk. 
  \item \textit{Group LASSO} (Grp-LASSO): The Grp-LASSO is similar to the LASSO but uses a different penalization function which involves the norm of each column of the coefficients matrix $A$. This approach filters out sections that are found to be irrelevant to predict the complete network and hence is useful when the same sparsity structure is shared among the different sections.
\end{itemize}

\subsubsection*{Neural networks predictors}
  \begin{itemize}
  \item \textit{Multilayer perceptron} (MLP) made of $2$ fully connected hidden layers. Data is normalized at the entry point of each hidden layer.  All model parameters are regularized using an $\ell_1$ penalty. 

  \item Fully connected \textit{long short-term memory} (FC-LSTM) composed of one LSTM layer followed by a fully connected layer (output layer) with $2$-time steps. The model parameters are regularized using an $\ell_1$ penalty. (note that we do not apply it to the recurrent step in the LSTM). 
\end{itemize}


These neural networks are built using a hyperbolic tangent as the activation function except for the output layer for which the identity function is used. The input data values are scaled so that they belong to $[-1,1]$. We train our models using an ADAM optimizer and \textit{mean squared error loss}. Because of the high dimensionality of the model ($p = 556$), all non-regularized approaches overfit the training dataset. To asses this problem, we rely on a combination of $2$ regularization techniques:
\begin{itemize}
\item Earlystopping \cite{caruana2001overfitting} is used to stop model training when the model shows signs of overfitting. 
\item $\ell_1$-regularization is conducted by adding a penalization term to the loss function that enforces a sparse structure of the parameters.
\end{itemize}
We stress that other configurations have been tested without improving the results: (a) using the sigmoid,  the ReLU and its variants as activation functions; (b) Log-scaling the input data and scaling to the interval $[0,1]$ rather than $[-1,1]$, (c) varying the number of layers, neurons, and lags, and (d) other regularization techniques. The models are built using the package Keras with TensorFlow backend in python.

\subsubsection{Results}

The methods are compared using the mean-squared error (MSE) and the mean absolute error (MAE). Both are computed on the test set of the data. We examine these errors, covering all of the sections as well as pin-pointing specific segments; we do the same over the entire time period, and focus on more specific periods, such as particular days, as well. 
\subsubsection*{Performance on the whole road network}
\begin{table}[h!]
  \centering
  \resizebox{\textwidth}{!}{%
  \begin{tabular}{lrrrrrrrrrrrll} 
    \hline
     \textbf{Model}  & \textbf{AR 1}  & \textbf{AR 3}  & \textbf{AR 5}  & \textbf{FC-2LSTM}  & \textbf{HA}  & \textbf{LASSO}   & \textbf{MLP}  & \textbf{OLS}  & \textbf{PO}  & \textbf{RS-LASSO}  & \textbf{TS-LASSO}  & \textbf{Grp-LASSO}   \\ 
    \hline
     \textbf{MAE}    & 7.25           & 7.23           & 7.23           & 7.21               & 7.72         & \textit{7.14}    & 7.26          & 9.20          & 9.38         & \textbf{7.13}      & 7.34               & 7.30                            \\ 
    \hline
     \textbf{MSE}    & 113.85         & 113.22         & 113.22         & 111.44             & 133.41       & \textit{109.24}  & 113.57        & 162.79        & 183.38       & \textbf{109.04}    & 115.47             & 113.81                      \\
    \hline
    \end{tabular}%
  }

\caption{MSE and MAE computed over the entire network and the entire time period.}
\label{tab:experiments_results}
\end{table}

\begin{figure}[h!]
  
  \begin{subfigure}[b]{0.45\textwidth}
    \includegraphics[width=1\textwidth]{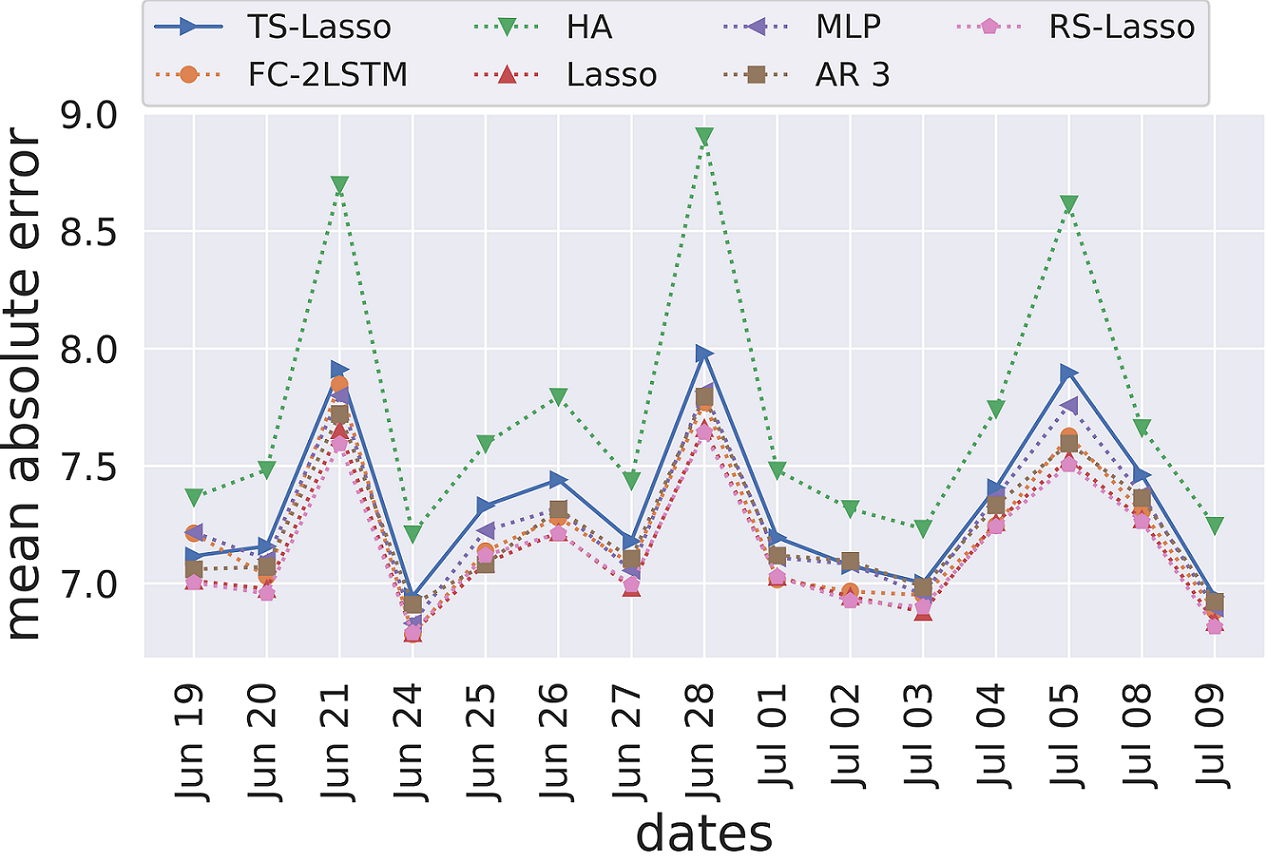}
    \caption{}
  \end{subfigure}
  \hfill
  \begin{subfigure}[b]{0.5\textwidth}
    \includegraphics[width=1\textwidth,keepaspectratio]{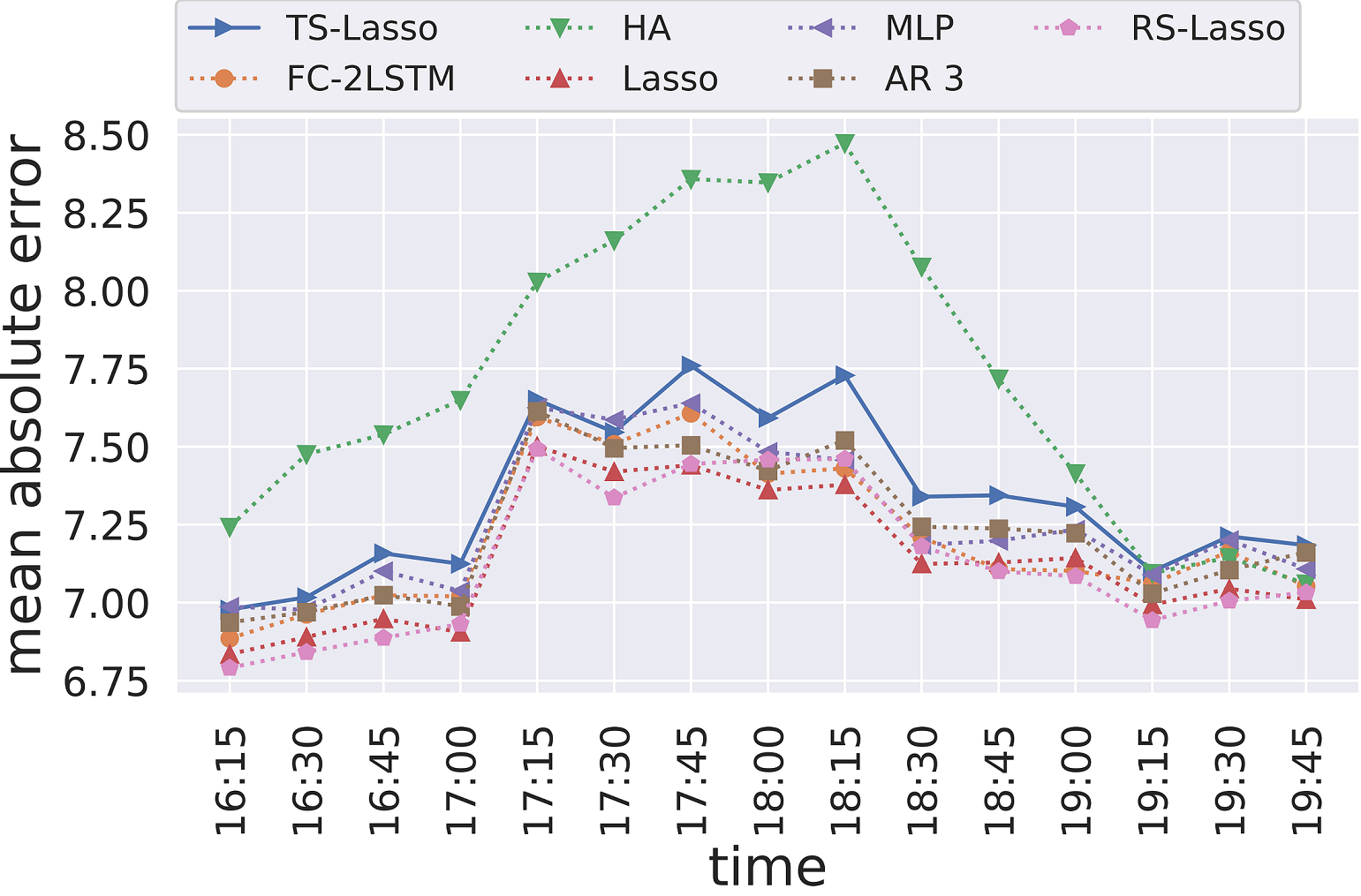}
    \caption{} 

  \end{subfigure}

\caption{MSE and MAE computed (a) for each day and (b) for each specific hour.}
\label{fig:experiments_results}
\end{figure}

Table \ref{tab:experiments_results} shows the performance of all models over the entire network. The simple OLS model does not generalize well together with PO prediction; they yield the worst performances. The OLS is subjected to an over-fitting problem as the MAE is equal to $9.20$ (resp. $5.69$) on the test set (resp. train set). Other baseline performances are similar with AR, yielding a slightly better performance than the HA. Note that increasing the number of lags does not lead to better results. The neural networks offer close results to those of the AR with the LSTM performing slightly better than the MLP model. The different LASSO approaches also yielded variable results; the model with regime-switching (RS-LASSO with a time switch at $6$:$15$pm) performs better than all other models ($7.13$ MAE) and is just slightly better than the LASSO ($-0.01$ MAE). Using a different LASSO predictor for each time (TS-LASSO) does not improve the error and yields a worse performance than the LASSO ($+0.20$ MAE). The same observation on Grp-LASSO suggests that entirely filtering some sections causes a certain loss. 

In Figure \ref{fig:experiments_results}, we present (a) the error computed for each specific day  (large errors correspond to fridays) and (b) the error for each specific hour (averaged over the days). For readability, we only present a subset of the models. The figure shows the LASSO and the RS-LASSO results over days and time and confirms that they outperform all other models. The two approaches alternate performance over time, we notice that the RS-LASSO performs better at the first and the last $1.5$ hours and is worse than the LASSO in the middle hours. As this cut-off is happening around the switch to the RS-LASSO ($6$:$15$pm), it might be possible that the RS-LASSO parts (before and after switching) focus on modeling the regime of the corresponding periods and overlooks the middle period that is separated by the time switch. 

\subsubsection*{Performance on highly variable sections }
To further understand the results, we compare the models on highly variable sections. Variable sections are selected based on a clustering algorithm applied to descriptive statistics (Min, Max, Mean, standard deviation, Quartiles) and computed for each section at each time. The cluster represents about $30\%$ of the sections.
\begin{table}[h!]
  \centering
  \resizebox{\textwidth}{!}{%
  \begin{tabular}{lrrrrrrrrrrrl} 
    \hline
     \textbf{Model}  & \textbf{AR 1}  & \textbf{AR 3}  & \textbf{AR 5}  & \textbf{FC-2LSTM}  & \textbf{HA}  & \textbf{LASSO}   & \textbf{MLP}  & \textbf{OLS}  & \textbf{PO}  & \textbf{RS-LASSO}  & \textbf{TS-LASSO}  & \textbf{Grp-LASSO}   \\ 
    \hline
     \textbf{MAE}    & 9.27           & 9.26           & 9.27           & 9.11               & 10.18        & \textit{9.01}    & 9.17          & 11.50         & 11.81        & \textbf{8.94}      & 9.20               & 9.17                            \\ 
    \hline
     \textbf{MSE}    & 172.59         & 171.94         & 171.98         & 163.45             & 207.18       & \textit{161.29}  & 165.39        & 238.91        & 274.92       & \textbf{159.66}    & 168.01             & 165.28                        \\
    \hline
    \end{tabular}
  }

\caption{MSE and MAE restricted to highly variable sections and computed (a) for each day and (b) for each specific hour.}
\label{tab:experiments_results_variable_roads}
\end{table}
\begin{figure}[h!]

\begin{subfigure}[b]{0.45\textwidth}
  \includegraphics[width=1\textwidth]{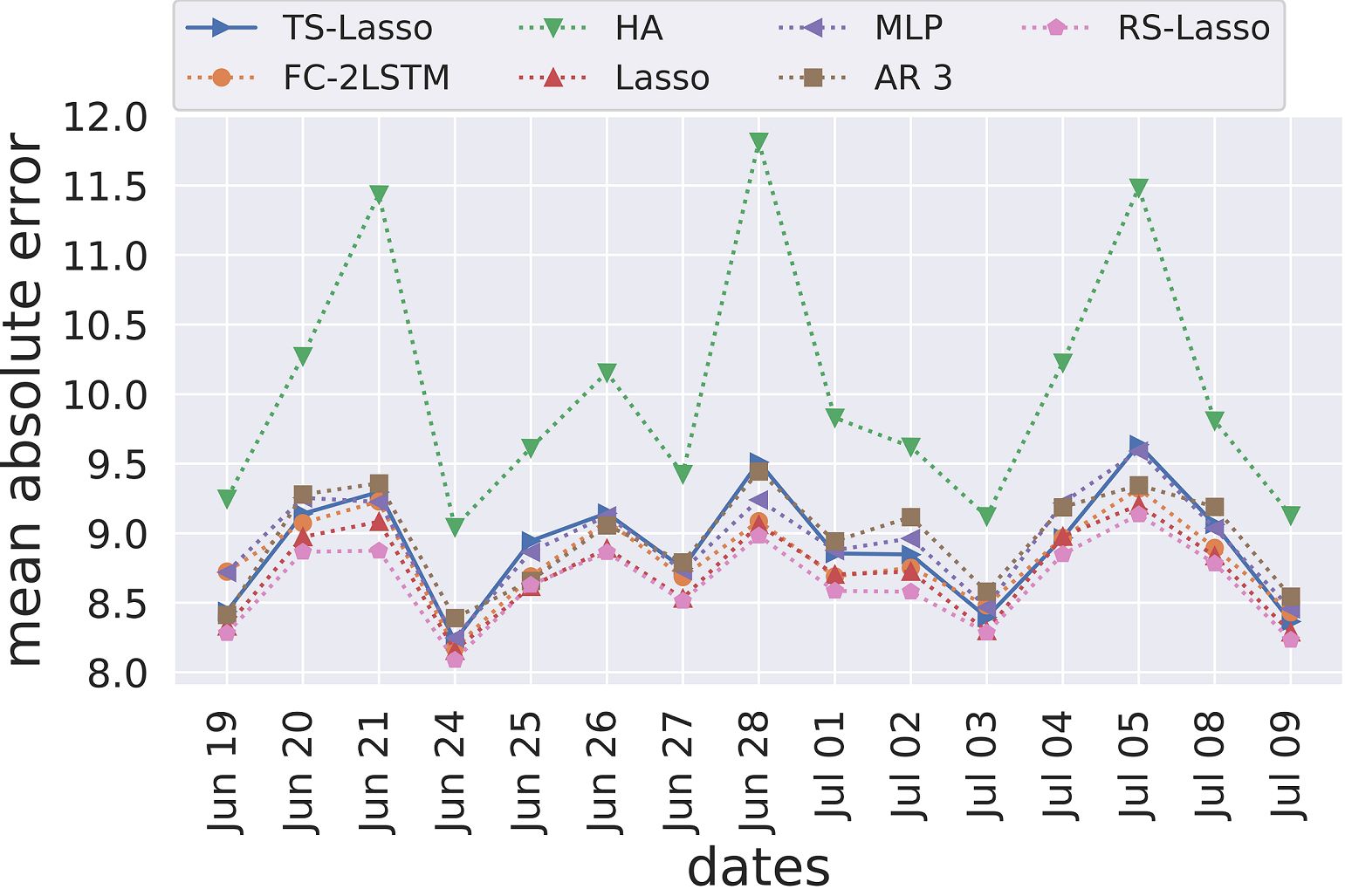}
  \caption{}
\end{subfigure}
\hfill
\begin{subfigure}[b]{0.5\textwidth}
  \includegraphics[width=1\textwidth,keepaspectratio]{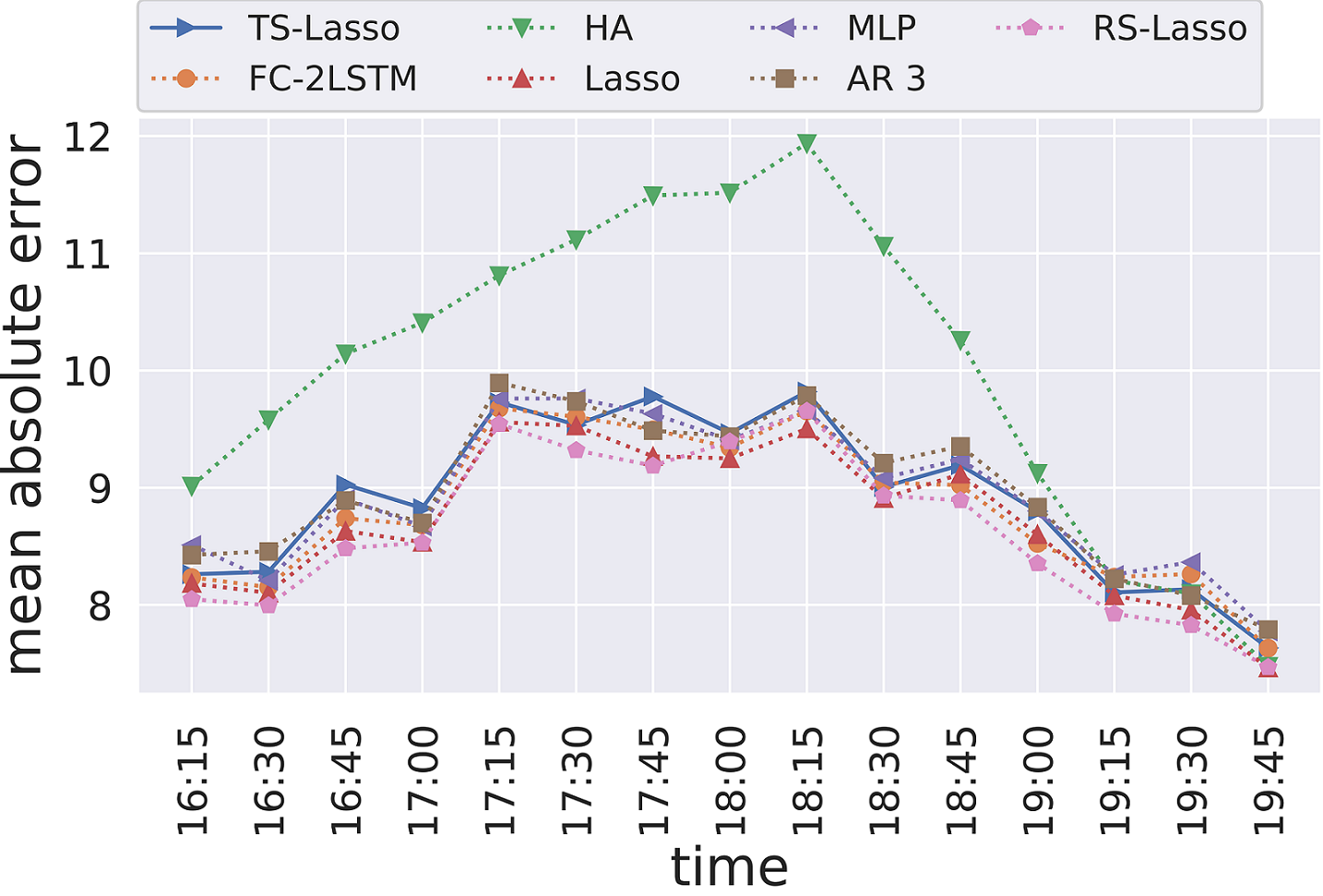}
  \caption{} 

\end{subfigure}

\caption{The prediction error for each method (a) over days, (b) over time.}
\label{fig:experiments_results_variable_roads}
\end{figure}

Table \ref{tab:experiments_results_variable_roads} and Figure \ref{fig:experiments_results_variable_roads} show the performance over highly variable sections only. All model performances deteriorate compared to the performances on the full road network. Similar observations as before can be made on these results with the exceptions that both neural networks, TS-LASSO and Grp-LASSO, perform better than the autoregressive models ($ -0.2$ MAE) . The performance gap between the models is wider. The RS-LASSO is clearly better than the LASSO ($-0.07$ MAE) compared to the observed performance on the entire network. The alternation of performance over time is still observable between the LASSO and RS-LASSO models.

\subsubsection*{Comparison of different penalties}

We now compare the performances between the LASSO, Ridge and Elastic Net penalties defined as:
\begin{itemize} 
  \item LASSO ($\ell_1$) penalty :  $w\mapsto\lambda  ||w||_1$.
  \item Ridge  ($\ell_2$) penalty : $w\mapsto \lambda  ||w||^2_2$.
  \item Elastic Net penalty:  $w\mapsto \lambda  (\alpha  ||w||_1 +  (1 - \alpha) ||w||^2_2)$.
\end{itemize} 
where $\lambda$ is the regularization parameter and $\alpha \in [0.1,0.9]$ represents some ratio between the LASSO and Ridge penalties.  For each method, $\lambda $ and $\alpha$ are determined through the application of a $5$-fold cross-validation. For Elastic Net, the best $\alpha$ was around $0.6$ giving the $\ell_1$-penalty at a slightly higher weight. 
\begin{table}[h!]
  \centering
  \begin{tabular}{lllll}
    \hline
    \textbf{Penalty} & \textbf{OLS (No penalty)} & \textbf{LASSO} & \textbf{Ridge} & \textbf{ElasticNet} \\ \hline
    \textbf{MSE}     & 162.83                    & 105.89         & 110.78         & 108.48              \\ \hline
    \textbf{MAE}     & 9.22                      & 7.00           & 7.15           & 7.09                \\ \hline
    \end{tabular}
  \caption{MSE and MAE computed over the whole network and the whole time period for the penalties LASSO, Ridge, Elastic Net.}
  \label{reg_res_compar}
\end{table}

Table \ref{reg_res_compar} shows the obtained MAE and MSE for all the models. The LASSO penalty shows a slightly better performance than the other two penalties. This can be explained by the ability of the LASSO to effectively select features for the model which is increasingly important in a sparse, high dimensional setup. Therefore, choosing the $\ell_1$-penalty for this problem seems to be efficient and also gives a better interpretation of the model's parameters.

\subsection{Graphical visualization of the LASSO model}

In this section, the aim is to show that the LASSO coefficients are interpretable and can be used to capture certain features of the architecture of the road network.

The LASSO coefficients are indicators of the traffic flow dependencies between the sections of the road network. For a section of interest, say $k$, a way to visualize these connections is to draw, on top of the network map, arcs going from section $k$ to sections for which the corresponding regression coefficient is non-zero (that belongs to the active sets $S^\star_k$). In addition, the color intensity and the width of each arc can be scaled by the value of the corresponding coefficient in order to highlight the influence of one section on another.

\begin{figure}[h!]
  
  \begin{subfigure}[b]{0.49\textwidth}
    \includegraphics[width=1\textwidth]{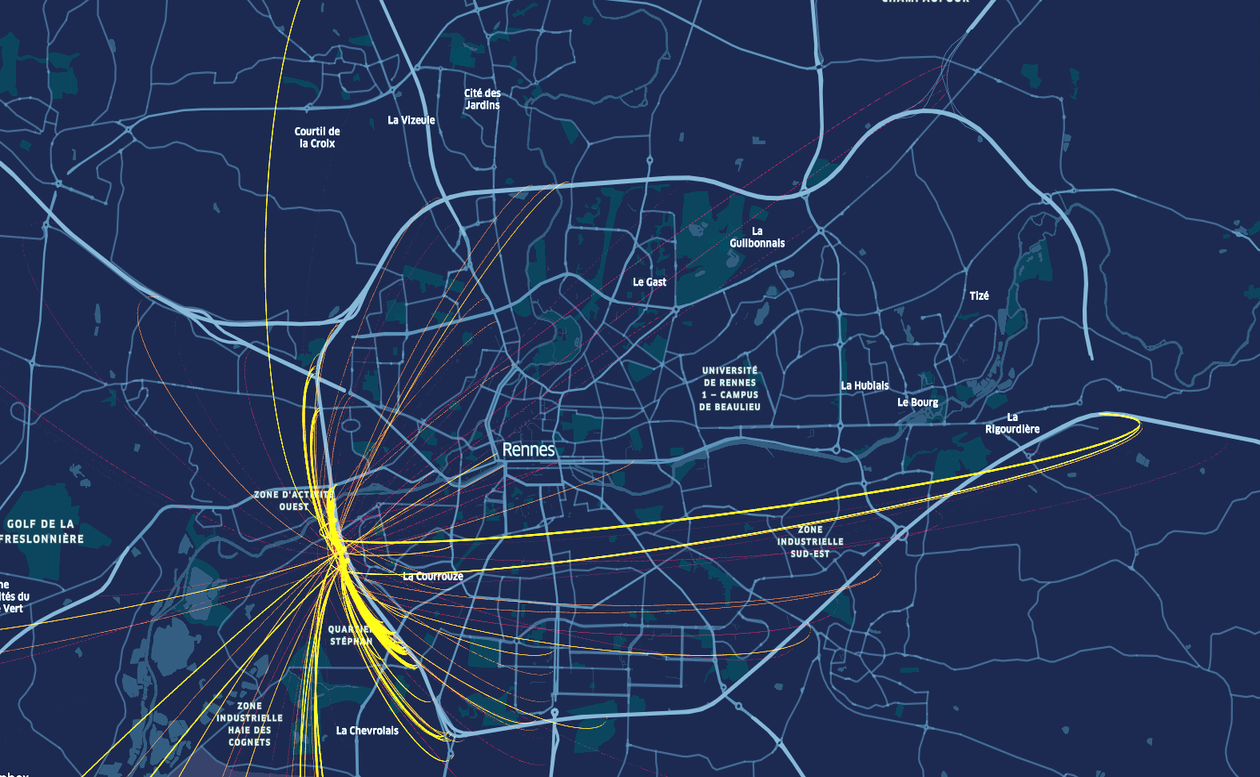}
    \caption{}
  \end{subfigure}
  \hfill
  \begin{subfigure}[b]{0.49\textwidth}
    \includegraphics[width=1\textwidth]{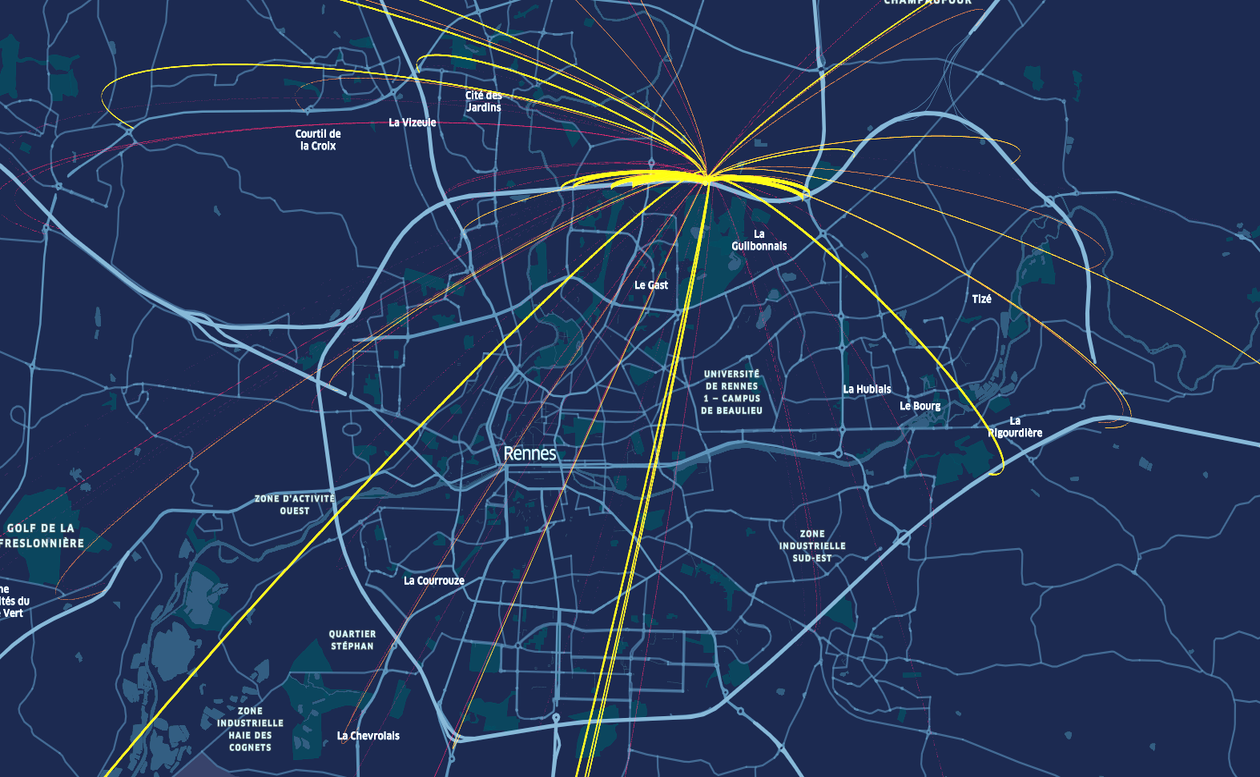}
    \caption{}
  \end{subfigure}

  \begin{subfigure}[b]{0.49\textwidth}
    \includegraphics[width=1\textwidth]{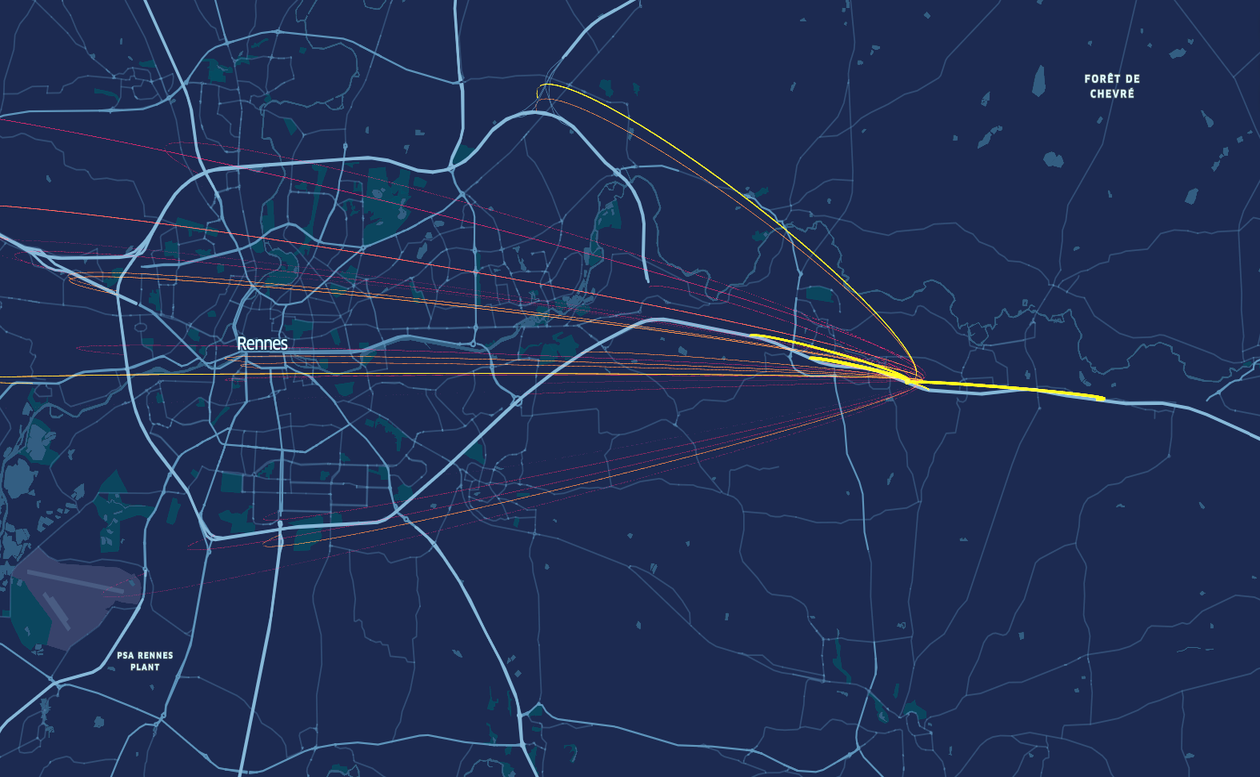}
    \caption{}
  \end{subfigure}
  \hfill
  \begin{subfigure}[b]{0.49\textwidth}
    \includegraphics[width=1\textwidth]{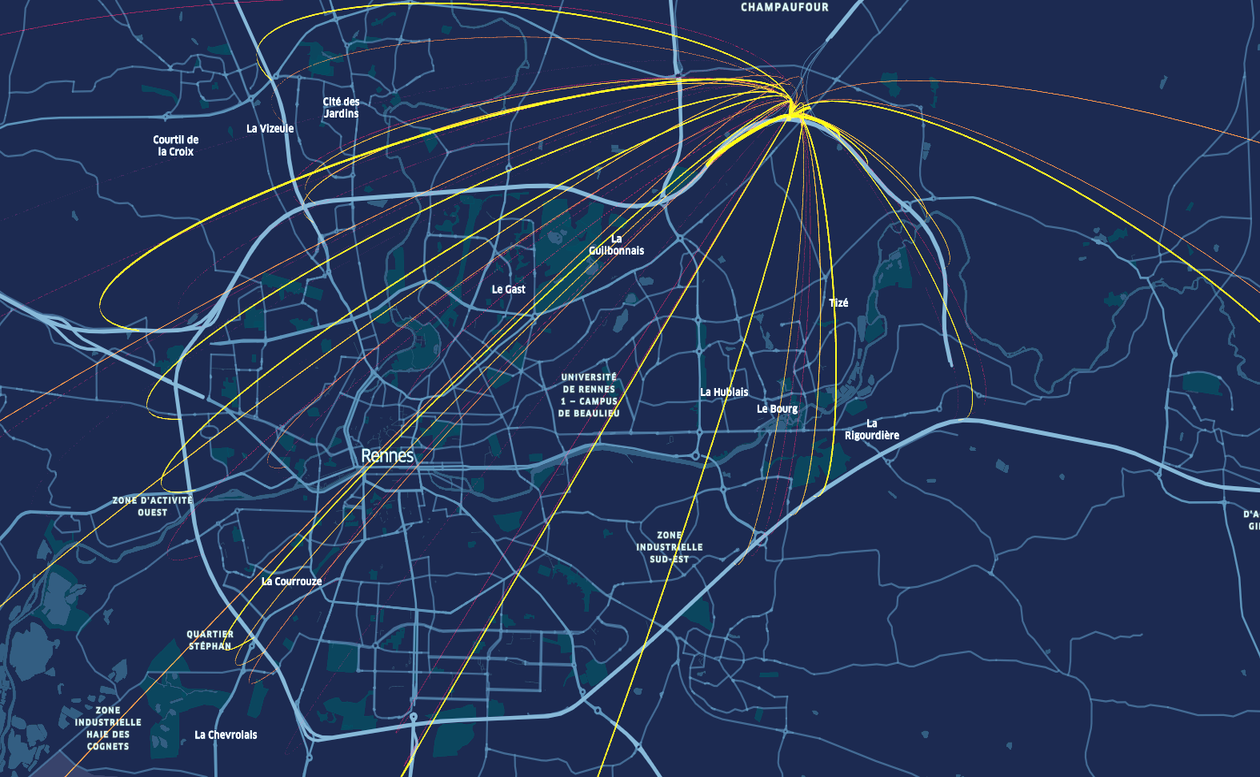}
    \caption{}
  \end{subfigure}
  \begin{subfigure}[b]{0.49\textwidth}
    \includegraphics[width=1\textwidth]{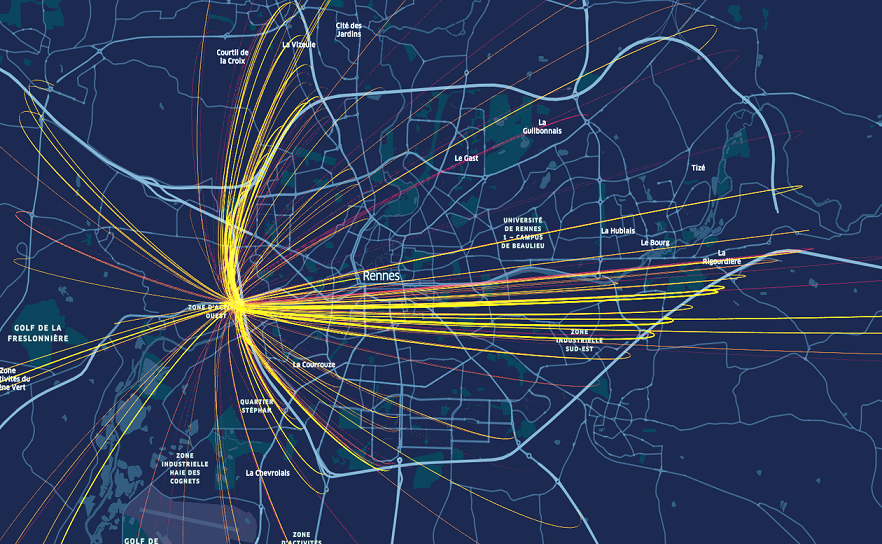}
    \caption{}
  \end{subfigure}
  \hfill
  \begin{subfigure}[b]{0.49\textwidth}
    \includegraphics[width=1\textwidth]{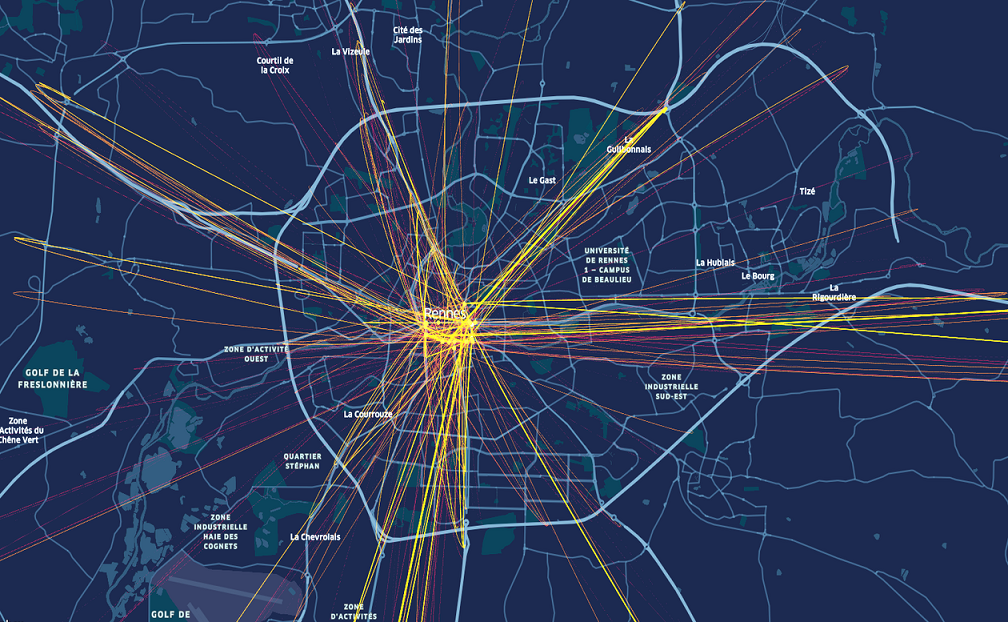}
    \caption{}
  \end{subfigure}
  \caption{
For each graph, arcs are drawn from the section of interest to the active sections deduced from the estimated matrix coefficients $\hat A^{(lasso)}$. Color intensity and width are scaled according to the value of the matrix coefficients. Different types of section are considered: (a) and (b) ring roads; (c) highway; (d,e) link roads; (f) city center.}
  \label{fig:Lasso_visualization}
  \end{figure}

The previous visualization is provided for several representative sections in Figure \ref{fig:Lasso_visualization}. In Figure \ref{fig:Lasso_visualization}, (a, b) represents sections from the Rennes ring roads, or beltways. These roads have an upper limit of $70km/h$ and circles the whole city of Rennes. We see that these sections strongly connect to adjacent sections in the ring and also to the closest highways. This is expected as these sections are physically connected to each other as they form a ring road and have links to adjacent highways. 
We also observe this kind of behavior on the highways (Figure \ref{fig:Lasso_visualization}.c) where the sections of the highway have a higher dependence on adjacent sections of the highway.
In Figure \ref{fig:Lasso_visualization}, (d, e), we show sections that are part of link roads, or bypasses. We see that these sections have a high number of connections to adjacent ring roads. These link roads usually connect the flow between relatively distant parts of the city so we also observe farther connections around the ring network. We observe similar aspects with sections from the city center as shown in Figure \ref{fig:Lasso_visualization}, (f). 

Another meaningful visualization provided by the model consists of representing the influence of each section on the overall network using the values of the estimated coefficients. 
For section $\ell$, the criterion is given by  $C_\ell = \sum_{1\leq k\le p, \hat A_{k\ell} ^{(lasso)} >0}  A_{k\ell}$. Taking into account negative coefficients implies an inverse relationship between sections, meaning that when a section is getting congested, another is getting less traffic. This may happen when there are incidents on the road but does not characterize the average influence between sections. Thus we only consider positive coefficients in building the criterion (consisting of $93\%$ of the coefficients).

\begin{figure}[h!]
  \hspace*{3cm}
  \begin{subfigure}[b]{0.5\textwidth}
    \includegraphics[width=1\textwidth]{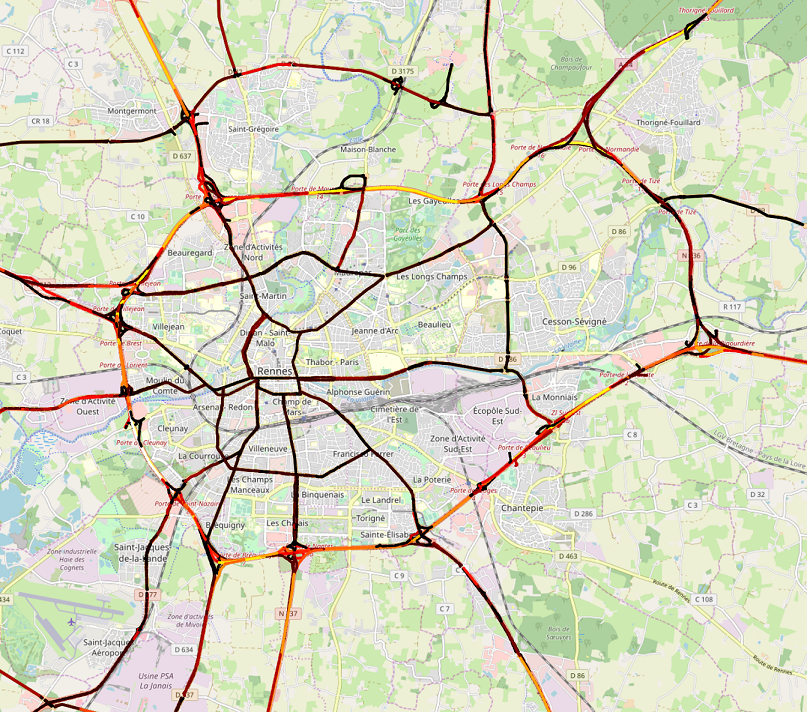}
  \end{subfigure}
  \hspace*{0.5cm}
  \begin{subfigure}[b]{0.11\textwidth}
    \centering
    \includegraphics[width=1\textwidth]{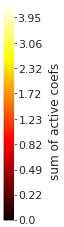}
  \end{subfigure}
  
\caption{Heatmap plot of the sections based on the influence criterion $C_\ell$.}
\label{fig:sections_influence_std}

\end{figure} 

  


Figure \ref{fig:sections_influence_std} is a heatmap of the road network where each section is color-coded according to the influence coefficient $C_\ell$.
We see that the network behavior is mainly influenced by ring road sections and highways. This is expected as traffic mainly flows on these roads as opposed to most sections in the city center, and those in the rural and suburban areas, that carries smaller quantities of traffic. 

  






\section{Simulation study}\label{sec:simu2}
In this section, we present a data generation process which produces time-series that behave similarly to road traffic data. 
We then use the output to validate the regime switching approach by comparing it to other baseline algorithms.

\subsection{Data generation process}

\subsubsection*{Model parameters}
 The intercept sequence  $(b_t)_{t=t,\ldots,T}$ is defined as follows.  For each $t\in \{1,\ldots, T\}$ and $k\in \{1,\ldots, p\}$, $(b_{t})_k = -(2.5^2-(t-17.5)^2)$.
The matrix $A $ is constructed as follows.

\begin{itemize} 

  \item Generate an Erd\"os-R\'enyi graph \citep{erdds1959random} with an average of $8$ connections for each node in the network. This results in a sparse structure for the matrix $A$.
  
  \item Generate independently the nonzero entries of the matrix $A $ according to a uniform distribution on $[-1,1]$.
  
  \item Normalize the matrix so that each line has an $\ell_2$-norm equal to $1$. 
  
\end{itemize}

 The last step above helps to obtain reasonable dynamics for the sequences of interest, for which the generation is described below. 

\subsubsection*{Generation of the sequence }

 Let $b$ be an intercept matrix and let $A$ and $A'$ be two matrices constructed as before. For each day $ \{1 \dots n\}$, the initial speed vector $W_0^{(i)}$ of size $p$ has independent entries distributed as a mixture between $3$ Gaussian distributions (to get roughly $3$ different classes of speed) given by $$0.25 \mathcal{N}(S_1,0.1S_1/2) + 0.5\mathcal{N}(S_2,0.1S_2/2) + 0.25 \mathcal{N}(S_3,0.1S_3/2), $$ with $ S_1= 45$, $S_2 = 72$ and $S_3 = 117 $. 
   Generate $(\epsilon_t^{(i)})_{t=1,\ldots T}\subset \mathbb R^p$ as an independent and identically distributed sequence with distribution $\mathcal{N}(0,I_p)$.  The data is then obtained based on 
\begin{align*}
   &W_{t+1}^{(i)}= b_t + A (W_t^{(i)} - \mathbb{E}[W_t^{(i)}]) + \epsilon_t^{(i)}, \qquad t = 1,\ldots, 11,\\
   &W_{t+1}^{(i)}= b_t + A' (W_t^{(i)} - \mathbb{E}[W_t^{(i)}]) + \epsilon_t^{(i)},\qquad t = 12,\ldots, 18 .
\end{align*}

\subsection{Results}
The RS-LASSO approach introduced in Section  \ref{sec:model_switch} results from a search of the best instant $t$ from which a different (linear) model shall be used. This search is conducted by comparing the estimated risks $\hat R_t$, $t=1,\ldots, T$, where $t$ is the instant from which a second model is estimated. 
\begin{figure}[h]
  \centering
  \includegraphics[width=0.8\textwidth]{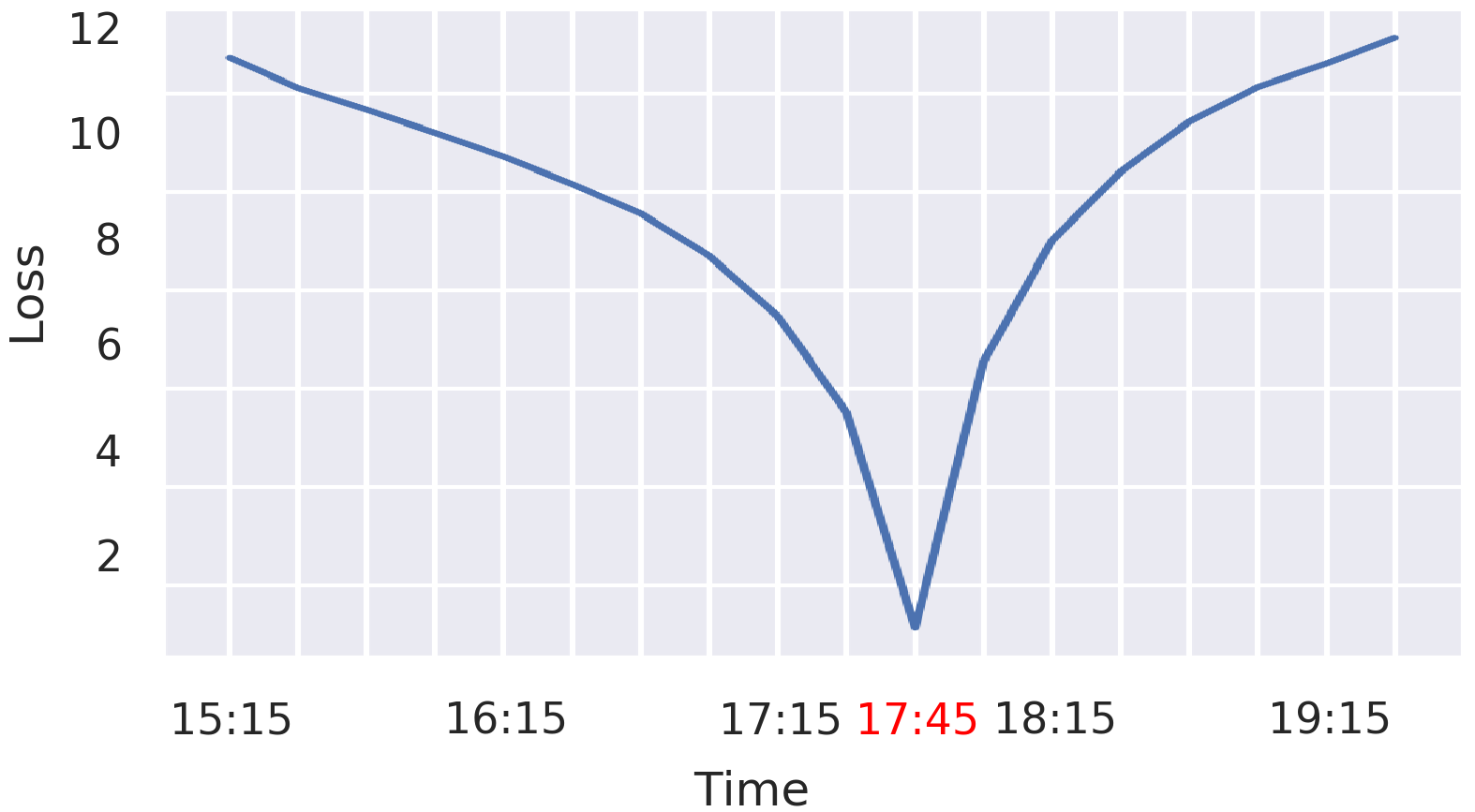}
  \caption{Evolution of the estimated risks $\hat R_t$.}
  \label{fig:rsm_split}
\end{figure}

Figure \ref{fig:rsm_split} represents the evolution of the estimated risks for a single representative dataset.  
The best change point can be easily identified as the procedure gives $\hat t = 11$. 

\begin{table}[h!]
\resizebox{\textwidth}{!}{%
\begin{tabular}{lrrrrrrrrrrrr}
\hline
\textbf{Model} & \textbf{OLS} & \textbf{MLP} & \textbf{FC-2LSTM} & \textbf{AR 1} & \textbf{AR 3} & \textbf{AR 5} & \textbf{HA} & \textbf{PO} & \textbf{LASSO} & \textbf{Grp-LASSO} & \textbf{RS-LASSO} & \textbf{TS-LASSO} \\ \hline
\textbf{MSE}         & 18.94        & 16.13        & 17.43             & 52.55         & 45.11         & 44.9          & 60.32       & 110.68      & 13.97          & 14.81              & \textbf{1.13}     & 5.30              \\ \hline
\textbf{MAE}         & 3.24         & 2.96         & 3.13              & 5.65          & 5.21          & 5.2           & 6.04        & 7.95        & 2.75           & 2.85               & \textbf{0.85}     & 1.75              \\ \hline
\end{tabular}%
}
\caption{MSE and MAE computed over the whole network and the whole time period.}
\label{tab:sim_results}
\end{table}

In Table~\ref{tab:sim_results}, we compare the RS-LASSO to the other methods based on $M=20$ independent datasets (the MSE and MAE have been averaged over the datasets). 
The RS-LASSO model obtains the best performance and clearly outperforms the other methods. The fact that RS-LASSO succeeds in identifying the exact switching time allows for the build-up of considerably good predictors for both time-windows before and after the switch. 
Nonlinear models such as MLP and LSTM obtained slightly lower performances than the LASSO and slightly better than OLS. The baseline methods HA, PO, and AR gave the worst performance on the simulated data (as well as on the real data). In short, we can distinguish between four classes of methods reflecting their level of performance. Baseline methods give the worst results. The one-model methods (LASSO, OLS, Grp-LASSO, MLP, LSTM) give reasonable results. The multiple-model TS-LASSO works better than one-model methods and finally, the regime-switching RS-LASSO performs the best (by fitting only two models to the data).


%
%
%
%
%

\begin{table}[h!]
\resizebox{\textwidth}{!}{%
\begin{tabular}{lrrll}
\hline
            & FD(RS-LASSO) & FD( LASSO) & SR(RS-LASSO) & SR( LASSO) \\ \hline
Left model  & 1.592872                & 19.755836            & 96.49\%                  & 79.11\%               \\ \hline
Right model & 1.301384                & 18.232622            & 96.37\%                  & 79.22\%               \\ \hline
\end{tabular}%
}
\caption{Frobenius distance and support recovery rate for LASSO and RS-LASSO.}
\label{tab:norm_results}
\end{table}

In order to further investigate the RS-LASSO model, we compare the estimated matrices $\hat A $ and $\hat A '$ to the ones used in the data generation process. 
Table \ref{tab:norm_results} shows the Frobenius distance (FD) $\| \hat A - A \|_F+ \| \hat A' - A' \|_F$ along with the support recovery rates $(SR)$ defined as the percentage of accordance in the non-zero coefficients (active set) between the estimated matrix and the truth. 
In contrast with the LASSO, we observe a very small Frobenius error for RS-LASSO. In the meantime, the support recovery of RS-LASSO is more than $96\%$ compared to around $79\%$ for the LASSO. This confirms the favorable performance of RS-LASSO. 

\section{Conclusion}
We proposed in this paper a methodology to model high-dimensional time-series for which regeneration occurs at the end of each day. In a high-dimensional setting, we established bounds on the prediction error associated to linear predictors.  More precisely, we showed that the regularized least-squares approach provides a decrease of the prediction error bound compared to ordinary least-squares. In addition, a regime switching detection procedure has been proposed and its consistency has been established. Through simulations and using floating car data observed in an urban area, the proposed approach has been compared to several other methods. Further research might be conducted on the extension of the proposed approach to general Markov chains. In this case, the regeneration times are unobserved and occur randomly involving a completely different probabilistic setting than the one borrowed in this paper. Whereas regeneration properties are known to be powerful theoretical tools to study estimates based on Markov Chains (see e.g., \citep{bertail2019rademacher} for a recent account), their use in a prediction framework seems not to have been investigated. Another possible extension concerns the identification of multiple change points. On the theoretical side, the consistency of the proposed cross-validation procedure has been established in Section \ref{sec:model_switch} only when a unique change point occurs.
  
%

\paragraph{Acknowledgment} The authors are grateful to Maxime Bourliatoux for helpful discussions on an earlier version of this work and Bethany Cagnol for assistance in proof reading the article. This work was financially supported by the FUI 22 GEOLYTICS project. The authors would like to thank IT4PME and Coyote for providing data and for their support.

\bibliographystyle{apalike}
\bibliography{bib_raod_traffic}

\begin{appendices}

\section{Proofs of the stated results}

Before starting the proofs, we specify some useful notations. For any matrix $M \in \mathbb R^{p\times T}  $, the $\ell_1$-norm is denoted by $\|M\|_1  = \sum_ {k=1}^p\sum_ {t=1}^T |M_{k,t} | $ and the $\ell _\infty$-norm is given by $\|M\|_\infty = \max _{k=1,\ldots, p,\, t= 1,\ldots, T} | M_{k,t} |$. The $t$-th column (resp. $k$-th line) of $M$ is denoted by $M_{\{\cdot, t\}}\in \mathbb R^p $ (resp $M_{\{k,\cdot\}} \in \mathbb R^T$). Both are column vectors.

\subsection{Proof of Proposition \ref{prop:optimal_matrix}}
Denote by $L_2(P)$ the Hilbert space composed of the random elements $W$ defined on $(\Omega, \mathcal F,P)$ such that $E \|W\|_F<+\infty$. The underlying scalar product between $X$ and $Y$ is $\tr(E [X^T Y])$. Hence $\inf_{L\in \mathcal L} E [\| Y- L(X) \|_F^2 ] $ is a distance between the element $Y$ of $L_2(P)$ and the linear subspace of $L_2(P)$ made of $ L(X)$, $L\in \mathcal L$. The Hilbert projection theorem ensures that the infimum is uniquely achieved and that $L^\star(X)$ is the argument of the minimum if and only if $  \tr(E [( Y - L^\star (X) )^T L(X) ]) = 0 $ for all $L  \in \mathcal L$. Taking $A=0$, in $L( X ) = b + A X$, gives the first set of normal equations. We conclude by taking $b= 0$ and  noting that $\tr(E [( Y - L^\star (X) )^T L(X) ]) = \tr(E [ X ( Y - L^\star (X) )^T  ] A )  = 0 $ for all $A\in \mathbb R^{p\times p}$ is equivalent to  $E[ (  Y-  L^\star( X) ) X^T ]  = 0$. This is the second set of normal equations. 
\qed

\subsection{Proof of Proposition \ref{prop:risk_decomp}}
The  statement follows from developing $p R(L) =  E [\| (Y-L^\star (X) ) + (L^\star (X) - L(X) )    \|^2] $ and then using that $\tr ( E [( Y -  L^\star(X) )  ( L(X)- L^\star (X))^T] )  = 0$ which is a consequence of the normal equations given in Proposition \ref{prop:uniq_min_homo}. 
\qed

\subsubsection{Proof of Proposition \ref{first_decomp_ols}}

Let $X_c = X - E[X]$. Recall that $\hat L  (X) = \hat b + \hat A  X  $ and  $L^\star (X) = E[Y]  + A^\star X_c$. In virtue of Proposition \ref{prop:risk_decomp}, we have
\begin{align*}
p \mathcal E (\hat L) &=     E_X [\| (A^\star -\hat A  )X_c + E(Y) - \hat AE(X) - \hat b \|_F^2] ,
\end{align*}
where $E_X$ stands for the expectation with respect to $X$ only. Because, $X_c$ has mean $0$, we get
\begin{align*}
p \mathcal E (\hat L) &=  E_X [\| (A^\star -\hat A ) X_c \|_F^2] +  \| E(Y) - \hat A E(X) - \hat b \|_F^2 .
\end{align*}
Finally, since $  E_X [\| (A^\star -\hat A  ) X_c \|_F^2]  =  \| (A^\star - \hat A ) \Sigma^{1/2} \|_F^2$,  and using the triangle inequality, we find
\begin{align*}
p \mathcal E (\hat L)  &\leq \| (A^\star -\hat A ) \Sigma^{1/2} \|_F^2 +  2\| E(Y) - \overline Y^n \|_F^2 + 2\| \hat A (E(X) - \overline X^n) \|_F^2.
\end{align*}
\qed

\subsubsection{Proof of Proposition \ref{prop:ols}}

The  inequality of Proposition \ref{first_decomp_ols} applied to the OLS estimate gives
\begin{align*}
p \mathcal E (\hat L^{(ols)}) \leq  \| ( A^\star -  \hat A ^{(ols)} ) \Sigma^{1/2} \|_F^2 +  2\| \overline{Y}^n -E (Y) \|_F^2 + 2 \|  \hat A^{(ols)}  \Sigma^{1/2} \overline {Z}^n  \|_F^2.
\end{align*}
Using that $\|  \hat A^{(ols)} \Sigma^{1/2} \overline {Z}^n  \|_F \leq \| ( \hat A^{(ols)}-  A^\star) \Sigma^{1/2}\|_F \|\overline {Z}^n \|_F  + \|  A^\star  \Sigma^{1/2} \overline {Z}^n  \|_F$, we get
\begin{align*}
p \mathcal E (\hat L^{(ols)}) \leq  \|  (A^\star -  \hat A ^{(ols)})\Sigma^{1/2} \|_F^2 (1 + 4\|\overline {Z}^n \|_F^2)    +  2\| \overline {Y}^n - E(Y) \|_F^2 &\\
+ 4   \|  A^\star \Sigma^{1/2} \overline {Z}^n  \|_F^2 .&
\end{align*}
Let $\Gamma $ denote the greatest eigenvalue of $\Sigma$. By Assumption \ref{ass_4}, we have $  \limsup_{n\to \infty}  \Gamma < \infty$. Applying Lemma \ref{lemma:basic_stuff}, we deduce that
\begin{align}
\nonumber p\mathcal E (\hat L^{(ols)}) & =  \|  (A^\star -  \hat A ^{(ols)})\Sigma^{1/2} \|_F^2 \left( 1 + O_P\left(  \frac{p }{ n} \right)\right )     +  O_P\left(  \frac{p  + \|A^\star \Sigma^{1/2}   \|_F^2}{  n} \right )  \\
\nonumber &=    \|  (A^\star -  \hat A ^{(ols)})\Sigma^{1/2} \|_F^2O_P(1)    +  O_P\left(  \frac{p  + \|A^\star \Sigma^{1/2}  \|_F^2}{  n} \right )  \\
\label{first_decomp_ols_2} &=    \|  (A^\star -  \hat A ^{(ols)})\Sigma^{1/2} \|_F^2O_P(1)    +  O_P\left(  \frac{p  + \Gamma \|A^\star\|_F^2 }{  n} \right ) ,
\end{align}
where we just used that $p/n \to 0$ and $\|A^\star \Sigma^{1/2}  \|_F^2 = \sum_{k=1}^p \|  A^{\star T}_{\{k,\cdot \}}  \Sigma^{1/2} \|_2^2\leq \Gamma \|A^\star\|_F^2$. Hence the proof of Proposition \ref{prop:ols} will be complete if it is shown that
\begin{align*}
\|  (\hat A ^{(ols)} -  A^\star )\Sigma^{1/2}  \|_F^2 = O_P \left(  {\frac{ p^2\sigma^2  }{ n} }  \right)  .
\end{align*}
By definition of $\hat A^{(ols)}$, it holds that 
\begin{align*}
\sum_{i=1}^n \| \hat Y_c^{(i)}  -   \hat A ^{(ols)} \Sigma^{1/2}   \hat Z_c^{(i)} \|_F^2 \leq \sum_{i=1}^n  \| \hat Y_c^{(i)}  -  A^\star  \Sigma^{1/2}   \hat Z_c^{(i)} \|_F^2.
\end{align*}
where  
\begin{align*}
&\hat Z_{c}^{(i)} =  \Sigma^{-1/2 } ( X_{}^{(i)} - \overline{X}^n) \qquad \text{and} \qquad  \hat Y_c^{(i)} = Y^{(i)} - \overline{Y}^n .
\end{align*}
 Let $\hat \Delta^{(ols)} = ( A^\star -  \hat A ^{(ols)} ) \Sigma^{1/2} $. Developing the squared-norm in the left-hand side and making use of the centering property, we obtain
\begin{align}
\sum_{i=1}^n  \|  \hat \Delta^{(ols)} \hat Z^{(i)} \|_F^2 &\leq 2 \left| \sum_{i=1}^n  \langle \hat Y_c^{(i)}  -  A^\star \Sigma^{1/2}   \hat Z^{(i)} , \hat \Delta^{(ols)}   \hat Z^{(i)} \rangle_F\right|
\label{eq:imp_ols} = 2 \left| \sum_{i=1}^n  \langle  \epsilon^{(i)} - \overline{\epsilon}^n , \hat \Delta^{(ols)}   \hat Z^{(i)} \rangle_F\right|,
\end{align}
with 
\begin{align*}
\epsilon^{(i)} = Y^{(i)} - L^\star (X^{(i)}).
\end{align*}
We now provide a lower bound for the left-hand side of \eqref{eq:imp_ols}. Define
\begin{align*}
&\hat \Pi =  \Sigma^{1/2} \hat \Sigma \Sigma^{1/2},\\
&\hat \Sigma  = n^{-1} \sum_{i=1} ^n  (X^{(i)} - \overline{X}^n)  (X ^{(i)} - \overline{X}^n) ^T.
\end{align*}
We have $n^{-1} \sum_{i=1}^n  \| \hat \Delta^{(ols)}  \hat Z^{(i)} \|_F^2  = \tr  ( \hat \Delta^{(ols)}  \hat \Pi \hat \Delta^{(ols)T}  ) $, and because $ \hat \Pi$ is symmetric we can write $  \hat \Pi  = UDU^T$ with $U^T U =I_p$ ($I_p$ is the identity matrix of size $p\times p$) and $D = \text{diag}(d_1,\ldots, d_p)$. Hence, defining $\tilde \Delta^{(ols)} = (\tilde \Delta^{(ols)} _{\{\cdot,1\}}, \ldots, \tilde \Delta^{(ols)}_{\{\cdot,p\}} ) = \hat \Delta^{(ols)} U $, it holds that
\begin{align*}
n^{-1}  \sum_{i=1}^n  \| \hat \Delta^{(ols)}  \hat Z^{(i)}  \|_F^2  = \tr \left(\sum_{k=1} ^p  d_k \tilde \Delta_{\{\cdot,k\}}^{(ols)} \tilde \Delta_{\{\cdot,k\}}^{(ols)T} \right) &\ge \gamma_{n} \sum_{k=1} ^p  \| \tilde \Delta_{\{\cdot,k\}}^{(ols)}\|_2^2  \\
&= \gamma_n\| \hat \Delta^{(ols)} \|_F^2 . 
\end{align*}
For the right-hand side of \eqref{eq:imp_ols}, we provide the following upper bound. Using the Cauchy-Schwarz inequality, we have
 \begin{align*}
\left| \sum_{i=1}^n  \langle \epsilon^{(i)} - \overline{\epsilon}^n , \hat \Delta^{(ols)}   \hat Z^{(i)} \rangle\right|  
&=   \left|   \langle   \sum_{i=1}^n (\epsilon^{(i)} - \overline{\epsilon}^n )\hat Z^{(i)T}  , \hat \Delta^{(ols)}   \rangle\right|\\
&\le \|  \sum_{i=1}^n (\epsilon^{(i)} - \overline{\epsilon}^n )\hat Z^{(i)T}  \|_F \|\hat \Delta^{(ols)}\|_F.
\end{align*}
Together with \eqref{eq:imp_ols}, the two previous bounds imply that
\begin{align*}
\gamma_n  \|   \hat \Delta^{(ols)} \|_F \le 2  \| n^{-1}  \sum_{i=1}^n  (\epsilon^{(i)} - \overline{\epsilon}^n ) \hat Z ^{(i)T}  \|_F.
\end{align*} 
Note that, for all $u\in \mathbb R^p$ with $\|u\|_2 =1$,
\begin{align*}
 u^T \hat \Pi u = 1  +  u^T  ( \hat \Pi - I) u \geq 1 - \| \hat \Pi - I\| .
\end{align*}
where $\|\cdot\| $ stands for the spectral norm. Hence we have that
\begin{align*}
\left(1 - \| I-\hat \Pi\|\right) \|   \hat \Delta^{(ols)} \|_F \le 2  \| n^{-1}    \sum_{i=1}^n  (\epsilon^{(i)} - \overline{\epsilon}^n ) \hat Z ^{(i)T}  \|_F.
\end{align*} 
Because the term in the right hand-side is $O_{ P}(p\sigma / \sqrt n  ) $ in virtue of  Lemma \ref{lemma:residuals_frob},  we just have to show that $ \| \hat \Pi - I\|\to 0$ in probability to conclude the proof. For that 
purpose we use Lemma  \ref{lemma:eigenvalues} with $B$ given by the following inequality (which holds true in virtue of Assumptions \ref{ass_4} and \ref{ass_3}),
\begin{align*}
 \tr \left( X_c ^{T} \Sigma ^ {-1}  X_c \right) &= \sum_{t=1} ^T (X_{\{\cdot, t\}} - E (X_{\{\cdot, t\}}) ^T  \Sigma ^ {-1} (X_{\{\cdot, t\}} - E (X_{\{\cdot, t\}}) \\
 & \leq  \sum_{t=1} ^T \gamma^{-1}  \| X_{\{\cdot, t\}} - E (X_{\{\cdot, t\}} \|^2_2  \\
 & \leq TpU^2 /\gamma,
\end{align*}
where, almost surely, $U = \limsup_{n \to \infty} \max _{k\in S, \, t\in \mathcal T} | X_{k,t} - E(X_{k,t} )  |$. Because $ p\log(p) / (n\gamma )\to 0$, the upper bound given in Lemma \ref{lemma:eigenvalues} goes to $0$ and it holds that $\| \hat \Pi - I\| \to 0 $, in probability.

\qed



%

\subsubsection{Proof of Proposition \ref{prop:Lasso}}
Define $ \hat \Delta =  (A^\star -  \hat A ^{(lasso)})$. A similar decomposition to \eqref{first_decomp_ols_2} is valid for the LASSO. We have
\begin{align*}
p\mathcal E (\hat L^{(lasso)}) \leq  \| \hat \Delta \Sigma^{1/2} \|_F^2   O_P(1)    +  O_P\left(  \frac{p  + \Gamma \|A^\star\|_F^2 }{  n} \right )  ,
\end{align*}
implying that the proof reduces to $\| \hat \Delta  \Sigma^{1/2}  \|_F^2 = O_P ( (p/n)  \log(p))$. To this aim, we will provide a bound on $ \|\Sigma^{1/2} \hat \Delta_{\{k,\cdot\}} \|_2 $ for each value of $k = 1,\ldots, p$. The minimization of the LASSO problem can be done by solving $p$ minimization problems where each gives one of the lines of the estimated matrix $ \hat A ^{(lasso)}  = ( \hat A_{\{1,\cdot\}}^{(lasso)} , \ldots,  \hat A_{\{p,\cdot\}}^{(lasso)})^T$. More formally, we have
\begin{align*}
  \hat A_{\{k,\cdot\}}^{(lasso)}    \in \argmin_{\beta \in \mathbb R^{p } }  \sum_{i=1}^n  \| (Y_{k}^{(i)} - \overline{Y_k}^n)  -  (X^{(i)} - \overline{X}^n)^T  \beta \|_2^2 +  2\lambda \|\beta\| _1.
\end{align*}
Denote by $  s_\infty ^\star = \limsup_{n\to \infty }\max_{k\in \mathcal{S}} |\mathcal S_k^\star|   $ and let $U $ be such that, almost surely,
\begin{align*}
& \limsup_{n\to \infty } \max _{k\in \mathcal S, \, t\in \mathcal T} | X_{k,t} - E(X_{k,t} )  |\leq U\qquad \text{and} \qquad  \limsup_{n\to \infty } \max _{k\in \mathcal S, \, t\in \mathcal T} |\epsilon_{k,t} |\leq U.
\end{align*}
Such a $U$ exists in virtue of Assumption \ref{ass_4} and \ref{sparsity_level}. Set
\begin{align*}
 & \lambda = 8\sqrt{  T^2 U^2 \sigma^ 2 n \log( p^3   )}.
 \end{align*}
Intermediary statements, that are useful in the subsequent development, are now claimed and will be proved at the end of the proof. Let $0<\delta<1$. For $n$ large enough, we have with probability $1-\delta/2$, for all $k\in \{1,\ldots, p\}$,
\begin{align}
\label{Lasso_cone_1} &n\| \hat \Sigma^{1/2}  \hat \Delta_{\{k,\cdot\}}\|_2^2 \leq  \lambda  (  3 \|\hat \Delta_{\{k,{S}_k^\star\}}\| _1 -  \|\hat \Delta_{\{k, {S}^{\star c}_k\}}\|_1 ),
\end{align}
In particular, $\hat \Delta_{\{k,\cdot\}} \in \mathcal C( 3 , S ^ \star_k ) $. Moreover, for $n$ large enough, with probability $1-\delta/2$, 
\begin{align}
\label{Lasso_invert_prop2} 16 \|  \hat \Sigma -  \Sigma   \|_\infty  (s^\star_\infty / \gamma ^\star) \leq 1 / 2 .
\end{align}
In the following, we assume that \eqref{Lasso_cone_1} and \eqref{Lasso_invert_prop2} are satisfied. This occurs with probability $1-\delta$.   From \eqref{Lasso_cone_1}, we have $\| \hat \Delta_{k}\|_1= \| \hat \Delta_{k,{S}_k^\star }\|_1 + \| \hat \Delta_{k,  {S}_k^{\star c}  } \|_1  \leq  4 \| \hat \Delta_{k, {S}_k^\star}\|_1 $. Then, using Jensen inequality, Assumption \ref{ass:RE}, and finally \eqref{Lasso_invert_prop2}, it follows that
\begin{align*}
|\| \Sigma^{1/2}  \hat \Delta_{\{k,\cdot\}} \|_2^2 - \| \hat \Sigma^{1/2}  \hat \Delta_{\{k,\cdot\}}\|_2^2 |& =   | \hat \Delta_{\{k,\cdot\}} ^T (  \hat \Sigma -  \Sigma )  \hat \Delta_{\{k,\cdot\}} | \\
& \leq  \|  \hat \Sigma -  \Sigma   \|_\infty \| \hat \Delta_{\{k,\cdot\}}\|_1^2  \\
& \leq     16 \|  \hat \Sigma -  \Sigma   \|_\infty   \| \hat \Delta_{\{k,{S}_k^\star\}}\|_1^2 \\
&\leq    16 \|  \hat \Sigma -  \Sigma   \|_\infty  |{S}^\star_k| \| \hat \Delta_{\{k,{S}_k^\star\}}\|_2^2\\
&\leq  16 \|  \hat \Sigma -  \Sigma   \|_\infty  |{S}^\star_k| \| \hat \Delta_{\{k,\cdot\}}\|_2^2\\
&\leq  16 \|  \hat \Sigma -  \Sigma   \|_\infty ( |{S}^\star_k| / \gamma ^{\star}) \| \Sigma^{1/2} \hat \Delta_{\{k,\cdot\}}\|_2^2\\
&\leq  \| \Sigma^{1/2} \hat \Delta_{\{k,\cdot\}}\|_2^2 / 2 .
\end{align*}
In particular 
\begin{align}\label{ineq:sigma_n}
 \| \hat \Sigma^{1/2}  \hat \Delta_{\{k,\cdot\}} \|_2^2 & \leq 2 \| \Sigma^{1/2} \hat \Delta_{\{k,\cdot\}}\|_2^2  .
\end{align}
%
Invoking Assumption \ref{ass:RE} we find 
\begin{align*}
\|\hat \Delta_{\{k,\cdot\}} \|_2^2  &\leq   \|\Sigma^{1/2}  \hat \Delta_{\{k,\cdot\}} \|_2^2 / \gamma^\star   \leq  2\|\hat \Sigma^{1/2}  \hat \Delta_{\{k,\cdot\}} \|_2^2 / \gamma^\star  ,
\end{align*}
which in turn gives
\begin{align}\label{ineq:l1}
 \|\hat \Delta_{\{k,{S}_k^\star\}}\|_1 \leq \sqrt {|{S}_k^\star|} \|\hat \Delta_{\{k,{S}_k^\star\}} \|_2
 \leq \sqrt {|{S}_k^\star|} \|\hat \Delta_{\{k,\cdot\}} \|_2  &\leq \sqrt { 2 (|{S}_k^\star| / \gamma^\star ) } \|\hat \Sigma^{1/2}  \hat \Delta_{\{k,\cdot\}} \|_2 .
\end{align}
Injecting this into the right-hand side of \eqref{Lasso_cone_1}, we get
\begin{align*}
n\| \hat \Sigma^{1/2}  \hat \Delta_{\{k,\cdot\}}\|_2^2\leq
3 \lambda   \sqrt { 2\frac{|{S}_k^\star|}  { \gamma^\star}} \| \hat \Sigma^{1/2}  \hat \Delta_{\{k,\cdot\}}\|_2,
\end{align*}
which implies that 
\begin{align*}
& n\| \hat \Sigma^{1/2}  \hat \Delta_{\{k,\cdot\}}\|_2 \leq  3\lambda \sqrt  { 2 \frac{|{S}_k^\star|}  { \gamma^\star}} .
\end{align*}
We conclude the proof using \eqref{ineq:sigma_n} to obtain
\begin{align*}
 \| \hat \Delta  \Sigma^{1/2}  \|_F^2 = \sum_{k=1}^p  \| \Sigma^{1/2} \hat \Delta_{\{k,\cdot\}}  \|_2^2 \leq  \frac{36 \lambda^2 }{n^2 \gamma^{\star}} \sum_{k=1} ^p  {|{S}_k^\star|}  
 =  \frac{ Cp \sigma^ 2       \log( p  ) }{n}   ,
\end{align*}
for some $C>0$ that does not depend on $(n,p)$, and recalling that the previous happens with probability $1-\delta$ with $\delta $ arbitrarily small.

\noindent\textbf{Proof of Equation \eqref{Lasso_cone_1} and \eqref{Lasso_invert_prop2}.} As $\log(p) / n \to \infty$ and $\liminf_{p\to \infty} \sigma^2>0$, we have, for $n$ large enough,
$ n \geq 4 (U^2 / \sigma^2) \log( 12p^2 / \delta  )   \geq   \log(8p^2 / \delta ) $. Note that the last inequality implies that we can apply Lemma  \ref{lemma:unif_epsilon} and \ref{lemma:unif_sigma}.

\noindent\textbf{Proof of Equation \eqref{Lasso_cone_1}.} Recall that $ \lambda = 4\sqrt{  T^2 U^2 \sigma^ 2 n \log( 12 p^3   )}$. Note that for $p$ large enough,
 \begin{align*}
 & \lambda \geq  4\sqrt{  T^2 U^2 \sigma^ 2 n \log( 12 p^2 / \delta  )}.
\end{align*}
It follows from Lemma \ref{lemma:unif_epsilon} that with probability $1-\delta/2$:
\begin{align}\label{eq:proof_Lasso_bound1}
\lambda \geq  2 \left\|\sum_{i=1}^n (\epsilon ^{(i)} - \overline{\epsilon}^n )  (X^{(i)} - \overline{X}^n )^T  \right\|_\infty .
\end{align}
Let $k\in \{1,\ldots, p\}$. Note that because of \eqref{eq:proof_Lasso_bound1}, it holds that
\begin{align*}
\lambda \geq  2 \left\|\sum_{i=1}^n (\epsilon_{\{k,\cdot\}} ^{(i)} - \overline{\epsilon_{\{k,\cdot\}}}^n )  (X^{(i)} - \overline{X}^n )^T  \right\|_\infty .
\end{align*}
We have
 \begin{align*}
&\frac{1}{2} \sum_{i=1}^n  \|   (X ^{(i)} - \overline{X} ^n )^T \hat \Delta_{\{k,\cdot\}}\|_2^2 \\
&\leq  \left| \sum_{i=1}^n  \langle \epsilon_{\{k,\cdot\}} ^{(i)} - \overline{\epsilon_{\{k,\cdot\}}}^n   ,   (X ^{(i)} - \overline{X} ^n )^T \hat \Delta_{\{k,\cdot\}}\rangle\right| + \lambda ( \|A_{\{k,\cdot\}}^\star\|_1 - \|\hat A_{\{k,\cdot\}}\|_1 ) .
\end{align*}
First consider the scalar product term of the right-hand side. We have
\begin{align*}
&\left| \sum_{i=1}^n  \langle \epsilon_{\{k,\cdot\}} ^{(i)} ,   (X ^{(i)} - \overline{X} ^n )^T \hat \Delta_{\{k,\cdot\}}\rangle \right| \\
 &= \left|   \langle  \sum_{i=1}^n   (X ^{(i)} - \overline{X} ^n ) (\epsilon_{\{k,\cdot\}} ^{(i)} - \overline{\epsilon_{\{k,\cdot\}}}^n ) , \hat \Delta_{\{k,\cdot\}}   \rangle_F\right|\\
&\leq  \|\sum_{i=1}^n  (X ^{(i)} - \overline{X} ^n ) (\epsilon_k ^{(i)} -  \overline{\epsilon_{\{k,\cdot\}}}^n )  \|_\infty \|\hat \Delta_{\{k,\cdot\}} \|_1\\
& \leq \|\sum_{i=1}^n   (\epsilon ^{(i)}_{\{k,\cdot\}} - \overline{\epsilon_{\{k,\cdot\}}}^n  )(X ^{(i)} - \overline{X} ^n )^T  \|_\infty\|\hat \Delta_{\{k,\cdot\}} \|_1\\
&\leq (\lambda/2)    \|\hat \Delta_{\{k,\cdot\}} \|_1.
\end{align*}
Now we deal with $\|A^\star_{\{k,\cdot\}}\|_1 - \|\hat A_{\{k,\cdot\}}\|_1$.  Note that, by the triangle inequality, 
\begin{align*}
 \|\hat A_{\{k,\cdot\}}\|_1  = \| A^\star_{\{k,\cdot\}} - \hat \Delta_{\{k,\cdot\}} \| _ 1&\geq | \|A^\star_{\{k,\cdot\}} \|_1 -  \|\hat \Delta_{\{k,\cdot\}}\|_1 |\\
 &\geq \|A^\star _{\{k,\cdot\}}\|_1 - \|\hat \Delta_{\{k,\cdot\}}\| _1 .   
\end{align*}
Using the previous, restricted to the active set ${S}^\star_k$, we obtain
\begin{align*}
\|A^\star_{\{k,\cdot\}}\|_1 - \|\hat A_{\{k,\cdot\}}\|_1 &= \|A^\star_{\{k,\cdot\}}\|_1 - \|\hat A_{k,{S}^\star_k}\|_1 -  \|\hat A_{k,{S}^{\star c}_k}\|_1\\
&\leq \|A^\star_{\{k,\cdot\}}\|_1 - (\|A^\star_{{S}^\star_k} \|_1 - \|\hat \Delta_{\{k,{S}^\star_k\}}\| _1) -  \|\hat A_{k,{S}^{\star c}_k}\|_1\\
& =  \|\hat \Delta_{\{k,{S}^\star_k\}}\| _1 -  \|\hat \Delta_{\{k,  {S}^{\star c}_k\}}\|_1.
\end{align*}
All this together gives
\begin{align*}
\frac{1}{2} \sum_{i=1}^n  \|   (X ^{(i)} - \overline{X} ^n )^T \hat \Delta_{\{k,\cdot\}} \|_2^2 &\leq (\lambda /2) ( \|\hat \Delta_{\{k,\cdot\}}\|_1 +2 \|\hat \Delta_{\{k,{S}^\star_k\}}\| _1 - 2 \|\hat \Delta_{\{k,  {S}^{\star c}_k\}}\|_1 )\\
&= (\lambda /2) (  3 \|\hat \Delta_{\{k,{S}^\star_k\}}\| _1 -  \|\hat \Delta_{\{k,  {S}^{\star c}_k\}}\|_1 ),
\end{align*}
and \eqref{Lasso_cone_1} follows from remarking that
\begin{align*}
\sum_{i=1}^n  \|  \hat \Delta_{\{k,\cdot\}}^T(X ^{(i)} - \overline{X} ^n )  \|_2^2 = n \| \hat \Sigma ^{1/2} \hat \Delta_{\{k,\cdot\}}  \|_2^2.
\end{align*}

%

\noindent\textbf{Proof of Equation \eqref{Lasso_invert_prop2}.} Use that for $n$ sufficiently large, we have $ \sqrt n \geq 16^2 (  s^\star_\infty /\gamma^{\star }  )  T U^2 \sqrt{  \log( 8p^2/ \delta )  }  $ and  Lemma \ref{lemma:unif_sigma} which guarantees that $\| \hat \Sigma - \Sigma \|_\infty \leq  8 T U^2 \sqrt{  \log( 8 p^2/ \delta )  /n }$.

\qed

\subsection{Proof of Proposition \ref{prop:regime_switch1}}
The proof is done under a simplified setting which does not involve any loss of generality. Let $t\in \{1,\ldots ,T\} $. Since the number of folds  $K$ is fixed, we only need to show the convergence of the risk estimate based on each fold i.e., that $|\hat R _{I,t} - R(L ^\star_t) |  = O_P (  \sqrt { \log(p) / n }   )$ for $I\in \mathcal F$. Moreover both quantities $\hat R _{I,t}$ and $R(L_t^\star) $ are made of two similar terms, one for each period $U_t$ and $V_t$. We focus only on the first one, the details for the other being the same. Moreover since the index $t$ is fixed in the whole proof, we remove it from the notation. We note  $\hat b_{J }, \hat A_{J} , U$ instead of $\hat b_{J, t},  \hat A_{J,t}, U_t$ (recall that $J$ is the complement of $I $ in $\{1,\ldots, n\}$). Define 
\begin{align*}
 \hat \delta  : =\frac{K}{ np  }  \left\{    \sum_{i\in  I } \| Y_{\{\cdot,U \}}^{(i)}   -\hat b_{J}-   \hat A_{J}   X_{\{\cdot,U\}}^{(i)} \|_F^2\right\} &\\
 -   p^{-1} E [ \| Y_{\{\cdot,U\}}^{(1)}   -  b^\star- A^\star   X_{\{\cdot,U\}}^{(1)} \|_F^2] &,
 \end{align*}
with $(b^\star , A^\star) = \argmin _ {b\in \mathbb R^{p\times t} ,\,  A \in \mathbb R^{p\times p} }  E [ \| Y_{\{\cdot,U\}}^{(1)} - b  -  A   X_{\{\cdot,U\}}^{(1)} \|_F^2] $. We need to show that $\hat \delta=
  O_P(\sqrt{  \log(p) /  n  })$. Write 
\begin{align*}
 \hat \delta   &\leq \left| \frac{K}{ np }  \sum_{i\in I}    \{ \| Y_{\{\cdot,U\}}^{(i)}   -\hat b_{J} - \hat A_{J}   X_{\{\cdot,U\}}^{(i)} \|_F^2  -  E_1 [ \|Y_{\{\cdot,U\}}^{(1)} -\hat b_{J}  -  \hat A_{J}   X_{\{\cdot,U\}}^{(1)} \|_F^2 ]  \} \right| \\
&\qquad +  p^{-1} \left \{ E_1 [ \|Y_{\{\cdot,U\}}^{(1)} -\hat b_{J} -  \hat A_{J}   X_{\{\cdot,U\}}^{(1)} \|_F^2 ] - E [ \| Y_{\{\cdot,U\}}^{(1)}  -b^\star -  A^\star   X_{\{\cdot,U\}}^{(1)} \|_F^2] \right \} \\
& : = \hat \delta_{1} + \hat \delta_{2}, 
\end{align*}
where the introduced expectation $E_1$ is taken with respect to $(Y^{(1)}, X^{(1)})$ only. In $\hat \delta_{2} $, we recover the excess risk for the LASSO computed with $n/K$ observations. Hence,  $\hat \delta_{2}  = O_P( \log(p) / n) $ as indicated by Proposition \ref{prop:Lasso}. We now focus on $\hat \delta_{1}$.  Let $ L ^{\star}: x \mapsto b^\star + A^\star x$ be the optimal predictor as introduced in Proposition \ref{prop:optimal_matrix} (with respect to the time period $U$). Put $\hat  L_{J} (  x)  = \hat b_{J} +  \hat A_{J} x $ and $  \epsilon_{\{\cdot ,U\}}^{(i)}  = Y_{\{\cdot,U\}}^{(i)}  - L^{\star}  (  X_{\{\cdot,U\}}^{(i)} )  $. Based on Proposition \ref{prop:optimal_matrix}, we deduce that
\begin{align*}
E_1 [ \|Y_{\{\cdot,U\}}^{(1)} -  \hat  L_{J} ( X_{\{\cdot,U\}}^{(1)} )  \|_F^2 ] &= E_1 [ \| \epsilon_{\{\cdot ,U\}}^{(1)}  \|_F^2 ] +  E_1 [ \| (\hat  L_{J} -   L ^{\star} ) ( X_{\{\cdot,U\}}^{(1)} )  \|_F^2 ]. 
\end{align*}
Similarly,
\begin{align*}
&\frac{K}{ n }  \sum_{i\in I}     \|Y_{\{\cdot,U\}}^{(i)} -  \hat  L_{J} ( X_{\{\cdot,U\}}^{(i)} )  \|_F^2 \\
 &= \frac{K}{ n }  \sum_{i\in I}   \| \epsilon_{\{\cdot ,U\}}^{(i)}  \|_F^2  - \frac{2K}{ n }  \sum_{i\in I}  \tr \left(  \epsilon_{\{\cdot ,U\}}^{(i)T}   (\hat  L_{J} -   L ^{\star} ) ( X_{\{\cdot,U\}}^{(i)} )  \right)   \\
& \qquad + \frac{K}{ n }  \sum_{i\in I}   \| (\hat  L_{J} -   L ^{\star} ) ( X_{\{\cdot,U\}}^{(i)} )  \|_F^2.  
\end{align*}
Let $\hat \Delta _{J} = \hat A_{J} -  A^\star $. Note that 
\begin{align*}
 (\hat  L_{J}  - L ^{\star}  ) (  x ) &= \hat b_{J} - b^\star +  \hat \Delta _{J}  x \\
 & =\overline Y^{(J)}_{\{\cdot,U\} }   -     \hat A_{J} \overline X^{(J)}_{\{\cdot,U\} }  - E_1 Y ^{(1)} _{\{\cdot,U\} }  +       A^\star E_1 X^{(1)}_{\{\cdot,U\} }  +  \hat \Delta _{J}  x\\
 & =\overline Y^{(J)}_{\{\cdot,U\} }  - E_1 Y ^{(1)} _{\{\cdot,U\} }   -     \hat A_{J} (\overline X^{(J)}_{\{\cdot,U\} } - E_1 X^{(1)}_{\{\cdot,U\} } )    +   \hat \Delta _{J}  (  x - E_1 X^{(1)}_{\{\cdot,U\} }  ) \\ 
&  =  \hat \mu_Y^{(J)}   - \hat A_{J} \hat \mu_X^{(J)}  +  \hat \Delta _{J}   (x - E_1 X^{(1)}_{\{\cdot,U\} } ) ,
\end{align*}
with $\hat \mu_Y^{(J)} = \overline Y^{(J)}_{\{\cdot,U\}}    - E_1 Y ^{(1)} _{\{\cdot,U\}} $ and $\hat \mu_X^{(J)} =   \overline X^{(J)}_{\{\cdot,U\}}  - E_1 X^{(1)}_{\{\cdot,U\}}  $. It holds that 
\begin{align*}
\frac{2K}{ n }  \sum_{i\in I}  \tr \left(  \epsilon_{\{\cdot ,U\}}^{(i)T}  (\hat  L_{J} -   L ^{\star} ) ( X_{\{\cdot,U\}}^{(i)} ) \right)  &= \frac{2K}{ n }  \sum_{i\in I}   \tr \left(   (\hat  L_{J} -   L ^{\star} ) ( X_{\{\cdot,U\}}^{(i)} )  \epsilon_{\{\cdot ,U\}}^{(i)T}    \right)  \\
&= 2\tr \left( (\hat \mu_Y^{(J)}  - \hat A_{J} \hat \mu_X^{(J)})    \hat \mu_\epsilon^{( I)}    \right)  + 2\tr \left( \hat \Delta _{J}   \hat \mu_{\epsilon X}  ^{(I)}      \right), 
\end{align*}
with $ \hat \mu_\epsilon ^{(I)} = (K/  n  )   \sum_{i\in I} \epsilon_{\{\cdot ,U\}}^{(i)T} $ and $\hat \mu_{\epsilon X}  ^{(I)}  = ( K /  n  )   \sum_{i\in I}  ( X_{\{\cdot,U\}}^{(i)}  -E_1 X^{(1)}_{\{\cdot,U\}} )\epsilon_{\{\cdot ,U\}}^{(i)T}    $.
In addition, we have
\begin{align*}
&\| (\hat  L_{J} -   L ^{\star} ) ( X_{\{\cdot,U\}}^{(i)} )  \|_F^2   - E_1 \| (\hat  L_{J} -   L ^{\star} ) ( X_{\{\cdot,U\}}^{(1)} )  \|_F^2  \\
&=  2\tr\left( (\hat \mu_Y^{(J)}   - \hat A_{J} \hat \mu_X^{(J)})^T \hat \Delta _{J}   (  X_{\{\cdot,U\}}^{(i)}  - E_1 [ X_{\{\cdot,U\}}^{(1)}  ] )\right) \\
& \qquad +  \| \hat \Delta _{J}   (X_{\{\cdot,U\}}^{(i)} - E_1 [ X_{\{\cdot,U\}}^{(1)}  ]  )\|_F^2 - E_1 \| \hat \Delta _{J}   (X_{\{\cdot,U\}}^{(1)} - E_1 [ X_{\{\cdot,U\}}^{(1)}  ]  )  \|_F^2,
\end{align*}
which gives that
\begin{align*}
&\frac{K}{ n }  \sum_{i\in I}\| (\hat  L_{J} -   L ^{\star} ) ( X_{\{\cdot,U\}}^{(i)} )  \|_F^2   - E_1 \| (\hat  L_{J} -   L ^{\star} ) ( X_{\{\cdot,U\}}^{(1)} )  \|_F^2  \\
&=  2\tr\left(  (\hat \mu_Y^{(J)}   - \hat A_{J} \hat \mu_X^{(J)})^T  \hat \Delta _{J} \hat \mu_{ X}  ^{(I)} \right)  +  \tr(  \hat \Delta _{J}  (\hat \Sigma^{(I)}  - \Sigma)   \hat \Delta _{J}^T ), 
\end{align*}
with $\hat \mu_{ X}  ^{(I)} = (K /  n )  \sum_{i\in I} (  X_{\{\cdot,U\}}^{(i)}  - E_1 [ X_{\{\cdot,U\}}^{(1)}  ] )$, $\hat \Sigma^{(I)} =  (K /  n )  \sum_{i\in I} (  X_{\{\cdot,U\}}^{(i)}  - E_1 [ X_{\{\cdot,U\}}^{(1)}  ] )^{\otimes 2}$. All this together leads to
\begin{align}
\nonumber\hat \delta_{1}  = & \frac{ K }{ np } \sum_{i\in I}   \left\{  \| \epsilon_{\{\cdot,U\}}^{(i)}  \|_F^2 - E_1 [\| \epsilon_{\{\cdot,U\}}^{(1)} \|_F^2\| ]  \right\} \\
\label{eq:decomp_important}&\quad -\frac{2}{p}\tr \left(  (\hat \mu_Y^{(J)}   - \hat A_{J} \hat \mu_X^{(J)} )   \hat \mu_\epsilon^{( I)}        \right) -  \frac{2}{p}\tr \left( \hat \Delta _{J}  \hat \mu_{\epsilon X}^{( I)}        \right)  \\
\nonumber & \quad \quad +   \frac{2}{p}\tr\left(  (\hat \mu_Y^{(J)}   - \hat A_{J} \hat \mu_X^{(J)})^T \hat \Delta _{J} \hat \mu_{ X}  ^{(I)}  \right)  + \frac{1}{p} \tr(  \hat \Delta _{J}  (\hat \Sigma^{(I)} - \Sigma)   \hat \Delta _{J}^T ).   
\end{align}
The conclusion of the proof will come invoking the following results which will be proven later on.

\begin{proposition}\label{prop:l1_bound}
Under the assumptions of Proposition \ref{prop:Lasso}, we have
\begin{align*}
\max_{k=1,\ldots, p} \|  \hat \Delta_{J,k}  \|_1  =   O_P \left( \sqrt { \frac{     \log( p  ) }{n} }\right) ,
\end{align*}
where $\hat \Delta_{J,k} $ is $k$-th line of $\hat \Delta_{J}$, provided that $\lambda = C\sqrt { n\sigma^2 \log(p)}$, for some constant $C>0$.
\end{proposition}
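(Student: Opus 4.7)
The plan is to essentially redo the proof of Proposition \ref{prop:Lasso} but track the bound in $\ell_1$ rather than integrating the squared prediction error. The crucial observation is that the high probability event on which equations \eqref{Lasso_cone_1} and \eqref{Lasso_invert_prop2} hold is \emph{uniform} over $k\in\{1,\ldots,p\}$. Indeed, in the proof of Proposition \ref{prop:Lasso}, the control of $\lambda$ relies on an $\ell_\infty$-type bound on the whole matrix $\sum_{i=1}^n(\epsilon^{(i)}-\overline{\epsilon}^n)(X^{(i)}-\overline X^n)^T$, which in turn gives the bound row by row simultaneously. Hence, with probability $1-\delta$, for every $k$, the cone property $\hat\Delta_{J,k}\in\mathcal C(S^\star_k,3)$ holds and
$$\|\hat\Sigma^{1/2}\hat\Delta_{J,k}\|_2 \le 3\lambda\sqrt{2|S^\star_k|/\gamma^\star}/n$$
is valid simultaneously for all $k$.

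From there, I would convert the $\ell_2$ prediction bound into an $\ell_1$ bound on the estimation error in two short steps. First, invoking Assumption \ref{ass:RE} together with \eqref{ineq:sigma_n} yields the $\ell_2$ parameter bound
$$\|\hat\Delta_{J,k}\|_2 \le \sqrt{2/\gamma^\star}\,\|\hat\Sigma^{1/2}\hat\Delta_{J,k}\|_2 = O_P\!\left(\sqrt{|S_k^\star|\log(p)/n}\right),$$
uniformly in $k$. Second, the cone property gives $\|\hat\Delta_{J,k}\|_1\le 4\|\hat\Delta_{J,k,S_k^\star}\|_1\le 4\sqrt{|S_k^\star|}\,\|\hat\Delta_{J,k}\|_2$. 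Combining these two displays and using Assumption \ref{sparsity_level} to bound $|S_k^\star|$ uniformly by a constant $s^\star_\infty$, one obtains
$$\max_{k=1,\ldots,p}\|\hat\Delta_{J,k}\|_1 \le 4\sqrt{s^\star_\infty}\max_k\|\hat\Delta_{J,k}\|_2 = O_P\!\left(\sqrt{\log(p)/n}\right),$$
which is the claimed rate, since $\lambda=C\sqrt{n\sigma^2\log(p)}$ matches (up to constants) the choice used in the proof of Proposition \ref{prop:Lasso}.

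The only subtlety to verify carefully, and what I expect to be the main obstacle, is the uniformity in $k$ of equation \eqref{Lasso_cone_1}: one must check that the concentration events invoked through Lemmas \ref{lemma:unif_epsilon} and \ref{lemma:unif_sigma} already handle the maximum over all rows. This is indeed the case because these lemmas are stated in terms of the $\|\cdot\|_\infty$-norm of the relevant $p\times T$ (respectively $p\times p$) matrix, so the logarithmic factor in $p$ in the choice of $\lambda$ absorbs a union bound over the $p$ row subproblems. Once this is noted, the rest is a direct rewriting of the already established arguments with the final estimate expressed in $\ell_1$ instead of $\ell_2$ norm.
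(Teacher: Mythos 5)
Your proposal is correct and follows essentially the same route as the paper: both build directly on the proof of Proposition \ref{prop:Lasso}, using the cone property from \eqref{Lasso_cone_1}, the restricted eigenvalue condition together with \eqref{ineq:sigma_n}, Cauchy--Schwarz on the active set, and the uniform sparsity bound, with uniformity in $k$ guaranteed by the $\|\cdot\|_\infty$ concentration lemmas. The only cosmetic difference is that the paper injects \eqref{Lasso_cone_1} into \eqref{ineq:l1} to obtain a self-bounding inequality for $\|\hat\Delta_{\{k,S_k^\star\}}\|_1$, whereas you pass through the $\ell_2$ bound on $\hat\Delta_{\{k,\cdot\}}$ first; both yield the same bound $24|S_k^\star|\lambda/(n\gamma^\star)$.
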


\begin{proposition}\label{prop:other_bound}
Under the assumptions of Proposition \ref{prop:Lasso}, we have
\begin{align*}
&\max( \| \hat \mu_Y^{(J)}   \|_\infty ,\| \hat \mu_X^{(J)}   \|_\infty , \|  \hat \mu_X^{(I)}   \|_\infty , \| \hat \mu_\epsilon^{( I)}   \|_\infty,\| \hat \mu_{\epsilon X}^{( I)}   \|_\infty )    =   O_P \left( \sqrt { \frac{     \log( p  ) }{n} }\right). 
\end{align*}
\end{proposition}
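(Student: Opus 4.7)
The strategy is to treat each of the five empirical averages as a sum of independent, centered, almost-surely bounded random matrices, and to apply Hoeffding's inequality entrywise followed by a union bound over the entries. All five quantities share the same asymptotic rate because the number of entries enters the bound only through a logarithm: each of $\hat\mu_Y^{(J)}$, $\hat\mu_X^{(J)}$, $\hat\mu_X^{(I)}$, $\hat\mu_\epsilon^{(I)}$ has at most $p|U| = O(p)$ scalar entries, while $\hat\mu_{\epsilon X}^{(I)}$ has $p^2$ entries, and $\log(p^2)=2\log(p)$.

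First, I would work on the event $\Omega_n$ on which $\max_{k\in\mathcal S,\,t\in\mathcal T}|W^{(i)}_{k,t}|$ is uniformly bounded by a deterministic constant $U$ for all $i\leq n$; by Assumption~\ref{ass_4} this event has probability tending to one, so any concentration bound valid on $\Omega_n$ converts into an unconditional $O_P$ statement. On $\Omega_n$, both $X_{k,t}^{(i)}-E[X_{k,t}^{(1)}]$ and the residuals $\epsilon_{k,t}^{(i)}$ are bounded by a multiple of $U$ (the latter using the sparsity-plus-boundedness argument already used in the proof of Proposition~\ref{prop:Lasso} to construct the constant $U$ bounding $\epsilon$). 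Moreover, the normal equations of Proposition~\ref{prop:optimal_matrix} give $E[\epsilon]=0$ and $E[\epsilon_{k',t}X_{k,t}]=0$, so each entry of each of the five matrices is the average of iid, bounded, centered random variables. The mean-zero property of entries of $\hat\mu_{\epsilon X}^{(I)}$ follows in particular from
\[
E\!\left[(X_{k,t}^{(1)}-E[X_{k,t}^{(1)}])\,\epsilon_{k',t}^{(1)}\right]
\;=\;E[X_{k,t}^{(1)}\epsilon_{k',t}^{(1)}]-E[X_{k,t}^{(1)}]E[\epsilon_{k',t}^{(1)}]\;=\;0.
\]

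Now, since $|I|=|J|=n/K$ with $K$ fixed, each scalar entry of the first four matrices is an average of $\Theta(n)$ iid centered variables bounded by $CU$. Hoeffding's inequality gives a subgaussian tail $2\exp(-cnt^2)$ per entry, and a union bound over the $O(p)$ entries together with the choice $t=C\sqrt{\log(p)/n}$ (with $C$ large enough) shows that the $\ell_\infty$-norm is at most $C\sqrt{\log(p)/n}$ with probability at least $1-2p^{\,1-cC^2}\to 1$. For $\hat\mu_{\epsilon X}^{(I)}$, each entry is the average of iid bounded centered products (each bounded by $(CU)^2$); the same Hoeffding plus union bound argument applies over the $p^2$ entries and, since $\log(p^2)=2\log(p)$, yields the same $O_P(\sqrt{\log(p)/n})$ rate. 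Taking the maximum of the five resulting $O_P(\sqrt{\log(p)/n})$ bounds (a finite family) completes the proof.

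No single step is a genuine obstacle: the argument is essentially an exercise in Bernstein/Hoeffding concentration under bounded data. The only mildly delicate points are (i) verifying that the cross-term entries are centered using the two sets of normal equations jointly, and (ii) transferring the almost-sure boundedness of Assumption~\ref{ass_4} into a uniform bound usable by Hoeffding, which is done by conditioning on $\Omega_n$ and exploiting $P(\Omega_n^c)\to 0$.
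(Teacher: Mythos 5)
Your proof follows essentially the same route as the paper's. The paper's one-line proof defers to Lemma \ref{lemma:basic_stuff} and Lemma \ref{lemma:unif_epsilon}, and the content of the latter is precisely your argument: entrywise concentration (the paper's Bernstein inequality, Lemma \ref{lemma:bernstein}) for iid bounded centered summands, a union bound over the $O(p)$ (resp.\ $p^2$) entries, and absorption of the number of entries into the logarithm; substituting Hoeffding for Bernstein changes nothing since all variables are bounded.

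One justification, however, needs repair. Your displayed identity asserts $E\bigl[(X_{k,t}-E X_{k,t})\,\epsilon_{k',t}\bigr]=0$ for each fixed $t$, which does not follow from the model: because a single matrix $A^\star$ is shared over the whole time window $U$, the normal equations of Proposition \ref{prop:optimal_matrix} give $E[\epsilon_{k',t}]=0$ entrywise but give the orthogonality $E[\epsilon X^T]=0$ only in time-aggregated form, i.e. $\sum_{t\in U}E[\epsilon_{k',t}X_{k,t}]=0$; an individual term of this sum can be nonzero (consider data generated with time-varying matrices but fitted with a common $A$, so that per-$t$ correlations cancel only in the sum). The repair costs nothing: the $(k,k')$ entry of $\hat\mu_{\epsilon X}^{(I)}$ is the average over $i\in I$ of the $t$-summed variables $\xi^{(i)}=\sum_{t\in U}(X^{(i)}_{k,t}-E X_{k,t})\,\epsilon^{(i)}_{k',t}$, which are iid, bounded by $T$ times a constant, and centered by the aggregated normal equation together with $E[\epsilon]=0$; applying Hoeffding to these $\xi^{(i)}$ rather than to the individual products, your union bound over the $p^2$ entries and the rate $\sqrt{\log(p)/n}$ go through verbatim. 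This is in fact exactly how the paper's Lemma \ref{lemma:unif_epsilon} proceeds: it applies Lemma \ref{lemma:bernstein} to the matrices $A^{(i)}=\epsilon^{(i)}(X^{(i)}-E(X))^T$, whose entries are the $t$-summed products.
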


Write
\begin{align*}
&\left| \frac{ K }{ np } \sum_{i\in I}   \left\{  \| \epsilon_{\{\cdot,U\}}^{(i)}  \|_F^2 - E_1 [\| \epsilon_{\{\cdot,U\}}^{(1)} \|_F^2\| ]  \right\}  \right|  =  \left| \frac{ 1 }{ p } \sum_{k=1} ^p \xi_{k,i} \right|
\leq \max_{k=1,\ldots, p}  |\xi_{k,i} |,
\end{align*}
with $\xi_{k,i}  =     (K / n ) \sum_{i\in I}  \{\|\epsilon_{\{k,U\}}^{(i)}  \|_F^2 - E_1 [\| \epsilon_{\{k,U\}}^{(1)} \|_F^2 ]\} $.
In virtue of the union bound, the previous term is then $O_P(\sqrt { \log(p) / n } )  $.  The remaining terms are bounded as follows, using that $|\tr(AB)| \leq \|A\|_\infty \|B\|_1$. We have that
\begin{align*}
\left| \tr \left(  (\hat \mu_Y^{(J)}   - \hat A_{J} \hat \mu_X^{(J)} )   \hat \mu_\epsilon^{( I)}        \right)\right| &\leq \| (\hat \mu_Y^{(J)}   - \hat A_{J} \hat \mu_X^{(J)} ) \|_\infty \| \hat \mu_\epsilon^{( I)}  \|_1\\
&\leq ( \| \hat \mu_Y^{(J)}  \|_\infty + \max _{k=1,\ldots, p} \| \hat A_{J,k}  \|_1 \| \hat \mu_X^{(J)} \|_\infty )   p T \|\hat \mu_\epsilon^{( I)}\|_\infty\\
& = O_P \left( p   \frac{\log(p) }{n} \right). 
\end{align*}
Second, we have
\begin{align*}
\left|\tr \left( \hat \Delta _{J}  \hat \mu_{\epsilon X}^{( I)}        \right)    \right | &\leq \|\hat \Delta _{J} \|_1 \|\hat \mu_{\epsilon X}^{( I)} \|_\infty = O_P \left(  p   \frac{\log(p) }{n} \right).
\end{align*}
Third, using that $\|  \hat \mu_X^{(I)}  \hat \mu_Y^{(J)T} \|_\infty \leq T \|  \hat \mu_X^{(I)} \|_\infty \|  \hat \mu_Y^{(J)T} \|_\infty $,  it holds
\begin{align*}
&\left|\tr\left(    (\hat \mu_Y^{(J)}   - \hat A_{J} \hat \mu_X^{(J)} )  ^T \hat \Delta _{J} \hat \mu_X^{(I)} \right) \right | \\
&\leq \left|\tr\left(  \hat \mu_Y^{(J)T}  \hat \Delta _{J} \hat \mu_X^{(I)} \right) \right |+
\left|\tr\left(   \hat \mu_X^{(J)T} \hat A_{J} ^T \hat \Delta _{J} \hat \mu_X^{(I)} \right) \right |\\
&= \left|\tr\left(  \hat \mu_X^{(I)}  \hat \mu_Y^{(J)T}   \hat \Delta _{J} \right) \right |+
\left|\tr\left(   \hat \mu_X^{(I)} \hat \mu_X^{(J)T} \hat A_{J} ^T \hat \Delta _{J}  \right) \right |\\
& \leq (\|  \hat \mu_X^{(I)}  \hat \mu_Y^{(J)T} \|_\infty  +
\|  \hat \mu_X^{(I)} \hat \mu_X^{(J)T} \hat A_{J} ^T\|_\infty )  \|  \hat \Delta _{J} \|_1\\
& \leq (\|  \hat \mu_X^{(I)}  \hat \mu_Y^{(J)T} \|_\infty  +
 \|  \hat \mu_X^{(I)} \hat \mu_X^{(J)T} \|_\infty \max_{k=1,\ldots, p} \| \hat A_{J,k} \|_1 )  \|  \hat \Delta _{J} \|_1\\
 & \leq T (\|  \hat \mu_X^{(I)} \|_\infty \|  \hat \mu_Y^{(J)T} \|_\infty  +
 \|  \hat \mu_X^{(I)} \|_\infty \|\hat \mu_X^{(J)T} \|_\infty \max_{k=1,\ldots, p} \| \hat A_{J,k} \|_1 )  \|  \hat \Delta _{J} \|_1\\
 & = O_P \left( p  \left( \frac{\log(p) }{n}\right) ^{3/2} \right). 
\end{align*}
Fourth, it follows that
\begin{align*}
\left| \tr(  \hat \Delta _{J}  (\hat \Sigma^{(I)} - \Sigma)   \hat \Delta _{J} ^T )   \right|\leq \|  \hat \Sigma^{(I)} - \Sigma\|_\infty \max_{k = 1,\ldots, p} \|\hat \Delta _{J,k}\|_1^2 = O_P \left( p  \left( \frac{\log(p) }{n}\right) ^{3/2} \right). 
\end{align*}
The previous bounds can be used in \eqref{eq:decomp_important} to prove the result.

\paragraph{Proof of  Proposition \ref{prop:l1_bound}} 
Without loss of generality the proof is done for $\hat A^{(lasso)} _{n}$ instead of  $\hat A_{J}$ (the only difference being the sample size $n$ instead of $n/K$). We build upon the proof of Proposition \ref{prop:Lasso}. Inject the inequality \eqref{Lasso_cone_1} in the right-hand side of \eqref{ineq:l1} to get
\begin{align*}
\|\Delta_{\{k,{S}_k^\star\}}\|_1  \leq \sqrt {  \frac{ 6 |{S}_k^\star| \lambda }{ n \gamma^\star }    \|\Delta_{\{k,{S}_k^\star\}}\| _1}.
\end{align*}
Because $\Delta_{\{k,{S}_k^\star\}} \in \mathcal C (3,S^\star_k ) $, it follows that
\begin{align*}
\|\Delta_{\{k,\cdot\}} \|_1 \leq 4 \|\Delta_{\{k,{S}_k^\star\}} \|_1  \leq  24 \frac{|{S}_k^\star| \lambda }{ n \gamma^\star} = C' \sqrt {\frac{ \sigma^ 2       \log( p  ) }{n}} ,
\end{align*}
for some $C'>0$ that does not depend on $(n,p)$, with probability at least $1-\delta$. 

\qed

\paragraph{Proof of  Proposition \ref{prop:other_bound}} The proof follows from the application of Lemma \ref{lemma:basic_stuff} and Proposition \ref{lemma:unif_epsilon}.

\subsection{Proof of Corollary \ref{cor:regime_switch2}}
Introduce the notation $R_t = R (L^\star _t)$. Since $\hat t = t^\star$ if $ \hat R _{t^\star} < \hat R_t$ for all $t \neq t^\star$, it suffices to show that
$ P (\hat R _{t^\star} - \min _{ t\neq t^\star} \hat R_t < 0  ) \to 1$. We have that
\begin{align*}
\hat R _{t^\star} - \min_{ t\neq t^\star} \hat R _ t  \leq R_{t^\star}  - \min_{t\neq t^\star} R_t  + \epsilon_n.
\end{align*}
with $\epsilon _n = 2 \max_{t=1,\ldots, T} |  \hat R _ t - R_t |$ and from Proposition \ref{prop:regime_switch1} $ \epsilon _n = o_P (1)$. But for $p$ large enough, $R_{t^\star} - \min_{t\neq t^\star} R_t < - \epsilon <0$. As a consequence, 
$ P (\hat R _{t^\star} - \min _{ t\neq t^\star} \hat R_t < 0  ) \geq  P (\epsilon_ n < \epsilon) \to 1$.
\qed

\section{Intermediate results}

Define the standardized predictors $Z = \Sigma^{-1/2} ( X  - E(X) ) $, and for any $i=1,\ldots, n$, $Z^{(i)} = \Sigma^{-1/2} ( X^{(i)}  - E(X) ) $. 

\begin{lemma}\label{lemma:basic_stuff}
Suppose that Assumptions \ref{ass_1}, \ref{ass_2}, \ref{ass_4} and \ref{ass_3} are fulfilled.  It holds that
\begin{align*}
\|   \overline{Y}^n - E(Y) \|_F  = O_P \left( \sqrt {p/n}  \right) .
\end{align*}
Moreover, for any $A\in \mathbb R^{ q\times p}$, it holds
\begin{align*}
\|  A \overline {Z}^n   \|_F  = O_P \left( \sqrt {\|A\|_F^2 /n}  \right) .
\end{align*}

\end{lemma}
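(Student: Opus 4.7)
The plan is to prove both bounds via a direct second-moment calculation followed by Markov's inequality; no concentration tools are needed, because Assumption \ref{ass_2} provides i.i.d.\ copies and Assumption \ref{ass_4} provides uniform boundedness.

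For the first bound, I would use that the $Y^{(i)}$ are i.i.d.\ copies of $Y$, so $E[\overline{Y}^n] = E[Y]$ and an entrywise expansion of the squared Frobenius norm gives
\[
E\|\overline{Y}^n - E(Y)\|_F^2 = \sum_{k \in \mathcal S,\; t=1,\ldots,T} \var\bigl(\overline{Y}^n_{k,t}\bigr) = n^{-1}\sum_{k,t} \var(Y_{k,t}).
\]
Since the entries of $Y$ are drawn from the same $W$ matrix whose entries are uniformly bounded under Assumption \ref{ass_4}, each summand is bounded by a constant independent of $(n,k,t)$. Because $T$ is fixed, the sum is $O(p)$, so $E\|\overline{Y}^n - E(Y)\|_F^2 = O(p/n)$, and Markov's inequality delivers $\|\overline{Y}^n - E(Y)\|_F = O_P(\sqrt{p/n})$.

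For the second bound, the key preliminary observation is that although $Z$ is a $p\times T$ matrix, the paper's definition $\Sigma = E[(X-E(X))(X-E(X))^T]$ yields
\[
E[Z Z^T] = \Sigma^{-1/2}\,\Sigma\,\Sigma^{-1/2} = I_p.
\]
The $Z^{(i)}$ are then i.i.d.\ copies of $Z$ with mean zero. Writing
\[
\|A\overline{Z}^n\|_F^2 = n^{-2}\sum_{i,j} \tr\bigl(A Z^{(i)} Z^{(j)T} A^T\bigr)
\]
and taking expectations, the $i\neq j$ terms vanish by independence and centering, leaving $E\|A\overline{Z}^n\|_F^2 = n^{-1} E[\tr(A Z Z^T A^T)] = n^{-1}\tr(AA^T) = n^{-1}\|A\|_F^2$. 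Markov's inequality then yields $\|A\overline{Z}^n\|_F = O_P(\sqrt{\|A\|_F^2/n})$.

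The hard part, if any, is purely notational: keeping the $p \times T$ matrix dimensions straight and recognizing that because $\Sigma$ sums the covariances across the $T$ time columns, one has $E[ZZ^T] = I_p$ rather than $T\cdot I_p$. Once this identity is in hand, both statements follow from the i.i.d.\ assumption by a one-line second-moment computation and a Markov bound.
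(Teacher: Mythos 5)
Your proposal is correct and follows essentially the same route as the paper: both reduce each bound to the identity $E\|\sum_i M_i\|_F^2 = nE\|M_1\|_F^2$ for i.i.d.\ centered matrices, compute the second moment (using $E[ZZ^T]=I_p$ for the second claim), and conclude by Chebyshev/Markov. Your explicit remark that $E[ZZ^T]=I_p$ rather than $T\cdot I_p$ is a helpful clarification of a point the paper uses implicitly.
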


\begin{proof}
If $M_i$ are i.i.d. centered random matrices, we have that $ E (\| \sum_{i=1}^n M_i \|_F^2) = n E( \| M_1 \|_F^2 )$.
It follows  that
\begin{align*}
E (\|   \overline{Y}^n - E(Y) \|_F^2) = n^{-1}  E[ \| {Y} - E (Y)\|_F^2 ]  \leq n^{-1} Tp \max_{t\in \mathcal T,\, k\in \mathcal{S}} \var(Y_{k,t}) . 
\end{align*}
The conclusion comes from using Chebyshev's inequality and the Assumption  \ref{ass_3}. For the second assertion, because $E[  Z   Z ^T    ] = I _ p $, we have
\begin{align*}
E( \|  A \overline {Z}^n  \|_F^2 )   = n^{-1} \tr( A E[  Z   Z ^T    ]A^T  ) = n^{-1}\| A \|_F^2    .
\end{align*}
\end{proof}

\begin{lemma}\label{lemma:residuals_frob}
Suppose that Assumptions \ref{ass_1}, \ref{ass_2}, \ref{ass_4} and \ref{ass_3} are fulfilled.  We have
\begin{align*}
&\|   \sum_{i=1}^n \epsilon^{(i)}  \|_F  = O_P \left( \sqrt {n \sigma^2 p}  \right) \\
&\|   \sum_{i=1}^n  (\epsilon^{(i)} - \overline{\epsilon}^n  ) ( Z^{(i)} -\overline{Z}^n )^T  \|_F  = O_P \left( \sqrt { n \sigma^2 p^2}   \right) .
\end{align*}
\end{lemma}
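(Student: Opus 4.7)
}
Both statements will follow from second-moment computations combined with Markov's inequality, so the game is to control $E\|\cdot\|_F^2$ in each case. Two preliminary facts are in force throughout: by Assumption~\ref{ass_2} the $n$ pairs $(\epsilon^{(i)},Z^{(i)})$ are i.i.d., and by the normal equations in Proposition~\ref{prop:optimal_matrix} we have $E[\epsilon]=0$ and $E[\epsilon X^T]=0$, the latter implying $E[\epsilon Z^T]=\Sigma^{-1/2}(E[\epsilon X^T]-E[\epsilon]E[X]^T)\Sigma^{-1/2}=0$. The first bound is then essentially immediate: independence and centering give $E\|\sum_{i=1}^n \epsilon^{(i)}\|_F^2 = n\,E\|\epsilon^{(1)}\|_F^2 = n\sum_{k,t} E[\epsilon_{k,t}^2]$, and the conditional variance bound $E[\epsilon_{k,t}^2\mid X_{\{\cdot,t\}}]\leq \sigma^2$ combined with the tower property yields $E\|\sum_i \epsilon^{(i)}\|_F^2\le npT\sigma^2$. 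Markov's inequality closes the case.

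For the second bound I would first decompose
\begin{align*}
\sum_{i=1}^n(\epsilon^{(i)}-\overline\epsilon^n)(Z^{(i)}-\overline Z^n)^T = \sum_{i=1}^n \epsilon^{(i)}Z^{(i)T} - n\,\overline\epsilon^n\overline Z^{nT},
\end{align*}
and handle the two pieces separately. For the remainder term, the sub-multiplicative bound $\|AB^T\|_F\le\|A\|_F\|B\|_F$ reduces it to controlling $n\|\overline\epsilon^n\|_F\|\overline Z^n\|_F$, which by the first bound (for $\epsilon$) and the identity $E\|Z\|_F^2=\tr(E[ZZ^T])=\tr(I_p)=p$ is $O_P(\sqrt{pT\sigma^2}\cdot\sqrt{p})=O_P(p\sigma)$, strictly dominated by the target rate $p\sqrt{n\sigma^2}$. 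The main term $\sum_i\epsilon^{(i)}Z^{(i)T}$ is an i.i.d. sum of centered matrices (since $E[\epsilon Z^T]=0$), so
\begin{align*}
E\Bigl\|\sum_{i=1}^n\epsilon^{(i)}Z^{(i)T}\Bigr\|_F^2 = n\,E\|\epsilon Z^T\|_F^2.
\end{align*}

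The hard part is evaluating $E\|\epsilon Z^T\|_F^2=\sum_{k,\ell}E\bigl[(\sum_t \epsilon_{k,t}Z_{\ell,t})^2\bigr]$, because expanding the square produces cross-in-time moments $E[\epsilon_{k,t}\epsilon_{k,t'}Z_{\ell,t}Z_{\ell,t'}]$ for which we only have a per-coordinate conditional variance hypothesis. My plan is to condition on $X$, apply Cauchy--Schwarz on the conditional moments to obtain $|E[\epsilon_{k,t}\epsilon_{k,t'}\mid X]|\leq\sigma^2$, and then Cauchy--Schwarz again on the $Z$ pair to bound $\sum_{t,t'}E[|Z_{\ell,t}Z_{\ell,t'}|]\le T\sum_t E[Z_{\ell,t}^2]$. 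Summing in $k,\ell$ and using $\sum_{\ell,t}E[Z_{\ell,t}^2]=p$ produces $E\|\epsilon Z^T\|_F^2\le T\sigma^2 p^2$, hence $E\|\sum_i\epsilon^{(i)}Z^{(i)T}\|_F^2\le nT\sigma^2 p^2$. A final application of Markov's inequality yields the announced rate $O_P(\sqrt{n\sigma^2 p^2})$, and combining with the remainder bound completes the lemma. The only real obstacle is the off-diagonal (in $t$) piece just discussed; once one commits to two consecutive Cauchy--Schwarz steps, everything else is bookkeeping.
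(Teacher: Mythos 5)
Your plan follows essentially the same route as the paper's proof: the identical decomposition $\sum_i(\epsilon^{(i)}-\overline\epsilon^n)(Z^{(i)}-\overline Z^n)^T=\sum_i\epsilon^{(i)}Z^{(i)T}-n\,\overline\epsilon^n\overline Z^{nT}$, the same treatment of the remainder via $\|\overline Z^n\|_F\|\sum_i\epsilon^{(i)}\|_F=O_P(p\sigma)$, the same use of the normal equations to kill the $i\neq j$ cross terms, and the same final rates via Markov. The one step you should repair is the claim $|E[\epsilon_{k,t}\epsilon_{k,t'}\mid X]|\leq\sigma^2$. The noise assumption of the paper only gives $E[\epsilon_{k,t}^2\mid X_{\{\cdot,t\}}]\leq\sigma^2$, i.e.\ conditioning on the single column $X_{\{\cdot,t\}}$, not on the full matrix $X$; conditioning on more information can inflate the conditional second moment on a set of positive probability, so $E[\epsilon_{k,t}^2\mid X]\leq\sigma^2$ does not follow, and your conditional Cauchy--Schwarz step is not justified as written. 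Two fixes are available. You can apply Cauchy--Schwarz unconditionally, $|E[\epsilon_{k,t}Z_{\ell,t}\cdot\epsilon_{k,t'}Z_{\ell,t'}]|\leq\sqrt{E[\epsilon_{k,t}^2Z_{\ell,t}^2]\,E[\epsilon_{k,t'}^2Z_{\ell,t'}^2]}$, and then use that $Z_{\{\cdot,t\}}$ is a function of $X_{\{\cdot,t\}}$ alone so that $E[\epsilon_{k,t}^2Z_{\ell,t}^2]=E[Z_{\ell,t}^2\,E[\epsilon_{k,t}^2\mid X_{\{\cdot,t\}}]]\leq\sigma^2E[Z_{\ell,t}^2]$; summing then recovers your bound $T\sigma^2p^2$. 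Alternatively, do what the paper does: bound $\|\epsilon Z^T\|_F\leq\sum_t\|\epsilon_{\{\cdot,t\}}Z_{\{\cdot,t\}}^T\|_F$ and apply Jensen to get $E\|\epsilon Z^T\|_F^2\leq T\sum_tE[\|Z_{\{\cdot,t\}}\|_2^2\|\epsilon_{\{\cdot,t\}}\|_2^2]$, which never produces cross-in-time moments and needs only the per-column conditioning. With either repair your argument is complete and yields the stated constants.
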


\begin{proof}
The first statement follows from
\begin{align*}
E[\|   \sum_{i=1}^n \epsilon^{(i)}  \|_F^2] = n  E \| \epsilon \|_F^2=   n  \sum_{t= 1}^T \sum_{k=1}^p   E [ \epsilon_{t,k} ^2]\leq n Tp \sigma ^2.
\end{align*}
For the second statement, start by noting that
\begin{align}\label{eq:decomp_covariance}
  \sum_{i=1}^n  (\epsilon^{(i)} - \overline{\epsilon}^n  ) ( Z^{(i)} -\overline{Z}^n )^T =   \sum_{i=1}^n  \epsilon^{(i)}   Z^{(i)T} -  n  \overline{\epsilon}^n  (\overline{Z}^{n})^T.
\end{align}
Then use the triangle inequality to get
\begin{align*}
\| \sum_{i=1}^n  (\epsilon^{(i)} - \overline{\epsilon}^n  ) ( Z^{(i)} -\overline{Z}^n )^T  \|_F&\leq \|   \sum_{i=1}^n  \epsilon^{(i)}  Z^{(i)T}  \|_F +  n \|  \overline{\epsilon}^n  (\overline{Z}^{n})^T\|_F\\
&\leq \|   \sum_{i=1}^n  \epsilon^{(i)}  Z ^{(i)T}  \|_F + \| \overline {Z}^n \| _F  \|\sum_{i=1}^n  \epsilon^{(i)} \|_F.
\end{align*}	
Using Lemma \ref{lemma:basic_stuff} and the first statement, we find that $\| \overline {Z} \| _F  \|\sum_{i=1}^n  \epsilon^{(i)} \|_F = O_P( \sigma p  ) $, Hence, showing that 
\begin{align*}
\|   \sum_{i=1}^n  \epsilon^{(i)}  Z ^{(i)T}  \|_F = O_P (\sqrt n\sigma p ),  
\end{align*}
will conclude the proof. Using Assumption \ref{ass_2} and Proposition \ref{prop:uniq_min_homo}, we find that $ E [  \tr(  \epsilon^{(i)}  Z^{(i)T} Z^{(j)} \epsilon^{(j)T}   )  ]  = 0$ for all $i\neq j$. It follows that
\begin{align*}
 E [ \| \sum_{i=1}^n  \epsilon^{(i)} Z^{(i)T}  \|_F^2  ] =    n  E [  \|  \epsilon^{(1)}  Z^{(1)T}  \|_F^2    ] .
\end{align*}
Using the triangle inequality and Jensen's inequality, we get
\begin{align*}
 E [ \| \sum_{i=1}^n  \epsilon^{(i)} Z ^{(i)T}  \|_F^2  ]  & \leq   n  E \left[   \left(\sum_{t=1}^T \|  \epsilon_{\{\cdot,t\}}^{(1)}  Z_{\{\cdot,t\}} ^{(1)T}  \|_F \right)^2    \right] \\
& \leq n T \sum_{t=1}^T   E [   \|  \epsilon_{\{\cdot,t\}}^{(1)}  Z_{\{\cdot,t\}}^{(1)T}  \|_F ^2    ]\\
& =  n T \sum_{t=1}^T  E [  \tr(  \epsilon_{\{\cdot,t\}}^{(1)} \| Z_{\{\cdot,t\}} ^{(1)}\|_2^2 \epsilon_t^{(1)T}   )  ] \\
&  =   n T \sum_{t=1}^T  E [  \| Z_{\{\cdot,t\}}^{(1)} \|_2^2 \| \epsilon_{\{\cdot,t\}}^{(1)}\|_2^2     ]  \\
&\leq nT p \sigma^2  \sum_{t=1}^T  E [  \| Z_{\{\cdot,t\}}^{(1)} \|_2^2] = nTp^2\sigma^2.
\end{align*}
The last inequality is a consequence of the definition of $\sigma^2$.
\end{proof}

Recall that
\begin{align*}
&\hat \Sigma =n^{-1} \sum_{i=1} ^n  (X^{(i)} - \overline{X}^n)  (X^{(i)} - \overline{X}^n) ^T, \\
&\hat \Pi =  \Sigma^{1/2} \hat \Sigma \Sigma^{1/2}.
\end{align*}

\begin{lemma}\label{lemma:eigenvalues}
Suppose that Assumptions \ref{ass_1} and \ref{ass_2} are fulfilled and that
\begin{align*}
B \geq \tr(  (X - E(X))^T \Sigma^{-1}  (X- E(X)) ) .
\end{align*}
Then, for any $(n,\delta)$ such that $n\geq 4 B  \log( 4 p T / \delta ) $, it holds, with probability $1-\delta$: 
\begin{align*}
\|\hat \Pi  - I _ {p} \|  \leq   4 \sqrt{ n^{-1}B  \log( 4 p  T / \delta ) } .
\end{align*}
\end{lemma}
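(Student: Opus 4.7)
The plan is to reduce the problem to a matrix concentration argument for a sum of i.i.d. random matrices after a standardization. Define $Z^{(i)} := \Sigma^{-1/2}(X^{(i)} - E(X))$, which are i.i.d. centered $p\times T$ matrices satisfying $E[Z^{(i)} Z^{(i)T}] = I_p$. The hypothesis on $B$ rewrites as $\|Z^{(i)}\|_F^2 \leq B$ almost surely, which in particular yields the spectral-norm bound $\|Z^{(i)} Z^{(i)T}\| \leq B$. Since $X^{(i)} - \bar X^n = \Sigma^{1/2}(Z^{(i)} - \bar Z^n)$, the identity $\hat\Pi = n^{-1}\sum_{i=1}^n (Z^{(i)} - \bar Z^n)(Z^{(i)} - \bar Z^n)^T = \tilde\Pi - \bar Z^n\,\bar Z^{nT}$, with $\tilde\Pi := n^{-1}\sum_{i=1}^n Z^{(i)} Z^{(i)T}$, follows by direct computation, and hence $\|\hat\Pi - I_p\| \leq \|\tilde\Pi - I_p\| + \|\bar Z^n\|^2$ in spectral norm.

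To control the first term I would apply the matrix Bernstein inequality to the i.i.d. self-adjoint centered summands $M_i := Z^{(i)}Z^{(i)T} - I_p$: the almost-sure spectral bound is $\|M_i\| \leq B$, while the PSD relation $Z^{(i)}Z^{(i)T} \preceq B\, I_p$ combined with $E[Z^{(i)}Z^{(i)T}] = I_p$ gives $E[M_i^2] \preceq B\, I_p$. The Bernstein tail then takes the form $2p\exp(-nt^2/(cB))$ in the sub-Gaussian regime, and thus $\|\tilde\Pi - I_p\| \leq C\sqrt{B\log(p/\delta)/n}$ with probability at least $1-\delta/2$.

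For the second term I would apply the rectangular matrix Bernstein inequality to the i.i.d., centered, $p\times T$ matrices $Z^{(i)}/n$. The uniform spectral bound is $\sqrt{B}/n$; since $\sum_i E[(Z^{(i)}/n)(Z^{(i)}/n)^T] = I_p/n$ and $\tr(E[Z^{(i)T} Z^{(i)}]) = E\|Z^{(i)}\|_F^2 \leq B$ yields the PSD bound $\sum_i E[(Z^{(i)}/n)^T(Z^{(i)}/n)] \preceq (B/n)\, I_T$, the relevant variance parameter is $B/n$. With dimensional factor $p+T$, this gives $\|\bar Z^n\|^2 \leq C' B\log((p+T)/\delta)/n$ with probability at least $1-\delta/2$.

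Combining both bounds via the union bound, using the elementary inequality $\max(\log(4p/\delta),\log(2(p+T)/\delta)) \leq \log(4pT/\delta)$ for $p,T\geq 1$, and invoking the hypothesis $n \geq 4B\log(4pT/\delta)$, which forces the linear-in-$1/n$ contribution $\|\bar Z^n\|^2$ to be absorbed into the dominant $1/\sqrt{n}$ term coming from $\|\tilde\Pi - I_p\|$, one arrives at $\|\hat\Pi - I_p\| \leq 4\sqrt{n^{-1}B\log(4pT/\delta)}$ with probability at least $1-\delta$. The main technical obstacle will be sharp tracking of absolute constants so that the prefactor $4$ and the dimensional argument $4pT/\delta$ inside the logarithm come out exactly as claimed; this requires careful calibration of the constants in each application of matrix Bernstein and in the way the lower bound on $n$ is used to absorb the second-order term.
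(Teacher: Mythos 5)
Your proposal is correct and follows essentially the same route as the paper's proof: the identical decomposition $\hat\Pi = \tilde\Pi - \overline{Z}^n\,\overline{Z}^{nT}$ followed by the triangle inequality, matrix Bernstein applied to the centered summands $Z^{(i)}Z^{(i)T}-I_p$ with variance parameter of order $B$ per summand, rectangular matrix Bernstein applied to $Z^{(i)}$ for the $\|\overline{Z}^n\|^2$ term, and absorption of the second-order term via the hypothesis $n\ge 4B\log(4pT/\delta)$. The only differences are in minor constant bookkeeping (the paper bounds $\|Z Z^T - I_p\|\le 2B$ by the triangle inequality rather than by $B$ directly), which does not affect the argument.
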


\begin{proof}
Remark that
\begin{align*}
 \hat \Pi  =  \tilde \Pi  -   \overline {Z }^n \, \overline {Z }^{nT},   
\end{align*}
with $  \tilde \Pi = n^{-1} \sum_{i=1} ^n  (X - EX )  (X - EX ) ^T$. Apply the triangle inequality to get
\begin{align*}
  \| \hat \Pi - I _{p} \|   \leq \|  \tilde \Pi -I _{p}  \|   +   \| \overline {Z }^n \, \overline {Z }^{nT}\| = \|  \tilde \Pi -I_{ p }  \|   +   \| \overline {Z }^n \|_2^2 .
\end{align*}
We shall apply Lemma \ref{lemma:matrix_bernstein} to control each of the two terms in the previous upper bound. First consider the case where $S^{(i)} = (Z^{(i)}   Z^{(i)T}   - I _{p}) / n $. We need to specify the value for $L$ and $v $ that we can use.
Note that 
\begin{align}\label{useful_control_eigen}
\|Z^{(i)}   Z^{(i)T } \|  =\|\sum _{ t = 1 }^T Z^{(i)} _ {\{\cdot, t\}}  Z^{(i)T}_ {\{\cdot,t\}}  \| \leq \sum _{ t = 1 }^T \|Z_{\{\cdot,t\}} ^{(i)} Z_{\{\cdot,t\}} ^{(i)T} \|\leq  \sum _{ t = 1 }^T \| Z_{\{\cdot,t\}} ^{(i)} \|_2^2 \leq   B.
\end{align}
Using the triangle inequality and Jensen inequality, we have, for any $i=1,\ldots, n $,
\begin{align*}
\|S_i\| &\leq   n^{-1} (\|Z  ^{(i)} Z  ^{(i)T }\|  +  E [\| Z  Z  ^{T } \|] )\leq 2 Bn^{-1}.
\end{align*}
Consequently we can take $L = 2B n^{-1}$ for the value of $L$. Moreover, we have 
\begin{align*}
\| \sum_{i=1} ^n E [ ( Z^{(i)} Z^{(i)T} -I_p )^T  ( Z^{(i)} Z^{(i)T} -I_p )  ]  \| &=  \| \sum_{i=1} ^n (E [ ( Z^{(i)} Z^{(i)T}  Z^{(i)} Z^{(i)T}]  -  I_p)    \|\\
& = n  \| E [ Z Z^T Z Z^T]  -  I_p\|\\
&\leq  n\| E [  Z Z^{T}  Z Z^{T}]  \|\\
&= n \sup_{\|u\| = 1}  E [ u^T  Z Z^{T}  Z Z^{T} u ]\\
&= n \sup_{\|u\| = 1}  E [\|  Z Z^{T} u\|^2_2 ]. 
\end{align*}
From the classic inequality $\| A^{1/2} u\|_2^2 \leq \| A\| \|u\|_2^2 $, we deduce that $ \| A u\|_2^2 \leq \| A\| \| A^{1/2} u\|_2^2  $. Applying this with $ A =  Z Z ^{T}$ and using \eqref{useful_control_eigen}, we obtain
\begin{align*}
 \|  Z Z^{T}  u \|_2^2\leq \| Z Z^{T}  \| \|  (Z  Z^{T})^{1/2} u \|_2^2\leq B \|  (Z  Z^{ T})^{1/2} u \|_2^2 .
\end{align*}
Taking the expectation in the previous inequality, we get $E[\|  Z Z^{T}  u \|_2^2 ] \leq B \|u\|_2^2$. Hence
\begin{align*}
\| \sum_{i=1} ^n E [ ( Z^{(i)} Z^{(i)T} -I_p )^T  ( Z^{(i)} Z^{(i)T} -I_p )  ]  \| \leq nB.
\end{align*}
Hence we can take $n^{-1} B$ for the value of $v$. Lemma \ref{lemma:matrix_bernstein} gives that
\begin{align*}
\mathbb P \left( \|  \tilde \Pi -I _{ p }  \|   > t \right) \leq  2 p    \exp \left(\frac{-t^{2} }{ 2n^{-1}B (1 +  2 t / 3) }\right).
\end{align*}
Consequently, with probability $1-\delta / 2$, it holds that
\begin{align*}
 \|  \tilde \Pi -I _{p}  \| \leq  \sqrt{  4n^{-1}B  \log( 4 p  / \delta ) },
\end{align*}
provided that $ {  4n^{-1}B   \log( 4 p  / \delta ) } \leq 9 / 4$. Now we apply Lemma \ref{lemma:matrix_bernstein} with $ S^{(i)} = Z^{(i)} /n $ to provide a bound on $\| \overline {Z }^n \|$. By \eqref{useful_control_eigen}, we have that
\begin{align*}
&\|Z \| = \|Z Z^T  \|^{1/2}\leq B^{1/2},\\
& \max\left\{ \|   E [ Z Z ^{T}   ]\|, \|   E [ Z^{T} Z   ]\| \right\} = \max\left\{  \|I_{p}\| , p \right\} = p\leq B. 
\end{align*}
Using $p + T\leq 2pT$, it follows that
\begin{align*}
\mathbb P\left(  \|\overline {Z }^n \|>t \right)   \leq 2 p T  \exp \left(\frac{-t^{2} }{ 2(B n^{-1} +   t  B^{1/2} n^{-1} / 3) }\right).
\end{align*}
By taking $t = \sqrt{  4n^{-1}B  \log( 4 pT / \delta ) } $ we get that $\|\overline {Z }^n \|> t$ with probability smaller than
 \begin{align*}
  2 p T \exp \left(\frac{- ( \log( 4 p T/ \delta ) )  }{  ( 1 +   \sqrt {4n^{-1} \log( 4 p T / \delta )  } / 3) }\right)\leq   \delta/2.
 \end{align*}
provided that $ 4n^{-1} \log( 4 p T / \delta ) \leq 9$. Hence we have shown that with probability $1-\delta$,
 \begin{align*}
 \|\Delta\| + \|\overline {Z }^n \|&\leq \sqrt{  4n^{-1}B   \log( 4 p  / \delta ) } + {  4n^{-1}B   \log( 4 p T / \delta ) }\\
 &\leq \sqrt{  4n^{-1}B \log( 4 p T / \delta ) } ( 1 +   \sqrt{ 4n^{-1}B   \log( 4 p T / \delta ) } ).
 \end{align*}
 Use that $4n^{-1}B   \log( 4 p T / \delta )\leq 1$ to conclude.
\end{proof}

\begin{lemma}\label{lemma:unif_epsilon}
Suppose that Assumptions \ref{ass_1} and \ref{ass_2} are fulfilled. Let $ U $ be such that $ \max _{k\in \mathcal S, \, t\in \mathcal T} | X_{k,t} - E(X_{k,t} )  |\leq U$  and $\max _{k\in \mathcal S, \, t\in \mathcal T} | \epsilon_{k,t}    |\leq U$, almost surely. If $T\leq p$ and  $   n \geq 4 (U^2 / \sigma^2) \log( 6p^2 / \delta  )  $,  we have with probability $1-\delta$:
\begin{align*}
\left\|\sum_{i=1}^n (\epsilon  ^{(i)} - \overline{\epsilon}^n )  (X^{(i)} - \overline{X}^n )^T  \right\|_\infty \leq 
4\sqrt{  T^2 U^2 \sigma^ 2 n \log( 6 p^2 / \delta  )}.
\end{align*}
\end{lemma}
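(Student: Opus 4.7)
The plan is first to isolate the principal centered sum and a lower-order cross term. Using that $\sum_i(\epsilon^{(i)} - \overline\epsilon^n) = 0$ and $\sum_i(X^{(i)} - \overline X^n) = 0$, I would verify the identity
\begin{align*}
\sum_{i=1}^n (\epsilon^{(i)} - \overline\epsilon^n)(X^{(i)} - \overline X^n)^T
= \sum_{i=1}^n \epsilon^{(i)} (X^{(i)} - E(X))^T - n\, \overline\epsilon^n (\overline X^n - E(X))^T =: M_1 - M_2 .
\end{align*}
Then I would bound $\|M_1\|_\infty$ by scalar Bernstein entrywise plus a union bound over the $p^2$ entries, and show that $\|M_2\|_\infty$ is of strictly smaller order under the stated lower bound on $n$.

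\textbf{Control of $M_1$.} The $(k,\ell)$-entry of $M_1$ equals $\sum_{i=1}^n W_i^{(k,\ell)}$ with $W_i^{(k,\ell)} = \sum_{t=1}^T \epsilon_{k,t}^{(i)} (X_{\ell,t}^{(i)} - E(X_{\ell,t}))$. By Assumption \ref{ass_2} these are i.i.d.\ in $i$; by the normal equations $E[\epsilon X^T]=0$ of Proposition \ref{prop:optimal_matrix} they are centered; they are almost surely bounded by $T U^2$; and by Cauchy--Schwarz followed by the conditional variance bound,
\begin{align*}
E[(W_i^{(k,\ell)})^2] \leq T \sum_{t=1}^T E[\epsilon_{k,t}^2 (X_{\ell,t} - E(X_{\ell,t}))^2] \leq T U^2 \sum_{t=1}^T E[\epsilon_{k,t}^2] \leq T^2 U^2 \sigma^2 .
\end{align*}
Applying the scalar Bernstein inequality with $u = 2\sqrt{n T^2 U^2 \sigma^2 \log(6p^2/\delta)}$, and using the hypothesis $n \geq 4 (U^2/\sigma^2) \log(6p^2/\delta)$ to dominate the linear Bernstein term $T U^2 u/3$ by the variance $n T^2 U^2 \sigma^2$, yields per-entry failure probability at most $\delta/(3 p^2)$. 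A union bound over the $p^2$ entries then gives $\|M_1\|_\infty \leq 2\sqrt{n T^2 U^2 \sigma^2 \log(6p^2/\delta)}$ with probability at least $1 - \delta/3$.

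\textbf{Control of $M_2$.} The $(k,\ell)$-entry equals $n^{-1}\sum_{t=1}^T(\sum_i \epsilon^{(i)}_{k,t})(\sum_i (X^{(i)}_{\ell,t} - E(X_{\ell,t})))$. I would apply scalar Bernstein separately to each $\epsilon$-sum and each $X$-sum (at most $2 p T \leq 2 p^2$ scalar sums, using $T \leq p$), tuning constants so that, with probability at least $1-\delta/3$, $\max_{k,t}|\sum_i \epsilon_{k,t}^{(i)}| \leq C \sqrt{n \sigma^2 \log(6p^2/\delta)}$ and $\max_{\ell,t}|\sum_i(X_{\ell,t}^{(i)} - E(X_{\ell,t}))| \leq C \sqrt{n U^2 \log(6p^2/\delta)}$. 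Multiplying, dividing by $n$ and summing over $t$ gives $\|M_2\|_\infty \leq C' T U \sigma \log(6p^2/\delta)$. Since $|\epsilon_{k,t}|\leq U$ forces $\sigma^2 \leq U^2$, the hypothesis on $n$ yields $n \geq 4 \log(6p^2/\delta)$, hence $\|M_2\|_\infty \leq 2\sqrt{n T^2 U^2 \sigma^2 \log(6p^2/\delta)}$ (after a minor constant tuning). Summing the two bounds delivers the claim with overall probability at least $1-\delta$.

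\textbf{Main obstacle.} The delicate point is the variance bound on $W_i^{(k,\ell)}$: the conditional hypothesis $E[\epsilon_{k,t}^2\mid X_{\cdot,t}] \leq \sigma^2$ only provides per-index control, whereas $W_i^{(k,\ell)}$ contains $T(T-1)$ cross terms $\epsilon_{k,t}\epsilon_{k,t'}(X_{\ell,t}-E(X_{\ell,t}))(X_{\ell,t'}-E(X_{\ell,t'}))$ with $t\neq t'$. Handling them via Cauchy--Schwarz rather than via the crude bound $|\epsilon|\leq U$ is precisely what preserves the sharp factor $\sigma^2$ in the variance instead of $U^2$; without this step the stated rate $\sqrt{\sigma^2 n \log p}$ would degrade to the useless $U\sqrt{n \log p}$. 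The remaining steps are routine book-keeping of Bernstein constants, all absorbed by the slack in the final factor $4$.
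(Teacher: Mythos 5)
Your proposal is correct and follows essentially the same route as the paper's proof: the same exact decomposition into $\sum_i \epsilon^{(i)}(X^{(i)}-E(X))^T$ minus the product-of-averages term, the same entrywise scalar Bernstein bound with union bound over $p^2$ entries, the same variance computation $T^2U^2\sigma^2$ via Cauchy--Schwarz on the $T$ inner terms combined with the conditional noise bound, and the same argument that the correction term is of lower order because $\sigma^2\leq U^2$ makes the hypothesis on $n$ imply $n\geq 4\log(6p^2/\delta)$. The only cosmetic difference is that the paper routes the correction term through $nT\|\overline{\epsilon}^n\|_\infty\|\overline{X}^n-E(X)\|_\infty$ rather than your entrywise product formula, which is the same bound.
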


\begin{proof}
We have
\begin{align*}
&\left\|\sum_{i=1}^n (\epsilon ^{(i)} - \overline{\epsilon}^n )  (X^{(i)} - \overline{X}^n )^T  \right\|_\infty\\
& = \left\|\sum_{i=1}^n \epsilon ^{(i)}    (X^{(i)} - \overline{X}^n )^T  \right\|_\infty\\
&\leq \left\| \sum_{i=1}^n  \epsilon^{(i)}   (X^{(i)} - E(X) )^T \right\|_\infty  +  n \left\|  \overline{\epsilon}^n  (\overline{X}^{n} - E(X) )^T\right\|_\infty\\
&\leq  \left\| \sum_{i=1}^n  \epsilon^{(i)}  (X^{(i)} - E(X) )^T \right\|_\infty  +  n \left\|  \sum_{t=1}^T \overline{\epsilon_{\{\cdot, t\}}}^n  (\overline{X_{\{\cdot, t\}}}^{n}- E(X_{\{\cdot, t\}}))^T\right\|_\infty\\
&\leq  \left\| \sum_{i=1}^n  \epsilon^{(i)}   (X^{(i)} - E(X) )^T \right\|_\infty  +  nT \max_{t=1,\ldots,T}  \left\|  \overline{\epsilon_{\{\cdot, t\}}}^n  (\overline{X_{\{\cdot, t\}}}^{n} - E(X_{\{\cdot, t\}}) )^T\right\|_\infty\\
&\leq \left\| \sum_{i=1}^n  \epsilon^{(i)}   (X^{(i)} - E(X) )^T \right\|_\infty  +  nT \max_{t=1,\ldots,T}  \left\|  \overline{\epsilon_{\{\cdot, t\}}}^n \|_\infty \| \overline{X_{\{\cdot, t\}}}^{n} - E(X_{\{\cdot, t\}})  \right\|_\infty\\
&\leq \left\| \sum_{i=1}^n  \epsilon^{(i)}   (X^{(i)} - E(X) )^T \right\|_\infty  +  nT  \|  \overline{\epsilon }^n \|_\infty \left\| \overline{X}^{n} - E(X)  \right\|_\infty.
\end{align*}
For the first term, we apply Lemma \ref{lemma:bernstein} with $A^{(i)} =   \epsilon^{(i)}   (X^{(i)} - E(X) )^T $ noting that, by Jensen inequality,
\begin{align*}
\var\left( (\epsilon^{(1)}   (X^{(1)} - E(X) )^T )_{k,\ell} \right)&= E \left[ (\epsilon^{(1)}   (X^{(1)} - E(X) )^T )_{k,\ell} ^2 \right] \\
&= E\left[ ( \sum_{t=1} ^T \epsilon_{k,t}  (X_{\ell,t} - EX_{\ell,t}  ) )^2 \right]  \\
&\leq  T\sum_{t=1} ^T  E( (  \epsilon_{k,t}  (X_{\ell,t} - EX_{\ell,t}  ) ) ^2 )  \\
&\leq T^2 \sigma^2 \max_{\ell \in S ,t\in \mathcal T} E( (X_{\ell,t} - EX_{\ell,t}   ) ^2 )  \\
&\leq T^2 U^2 \sigma^2.
\end{align*}
we may take $v = T^2 U^2 \sigma^2$ and $ c= T U^2$. Then using that $9 T^2 U^2 \sigma^2   n \geq 4 T^2U^4 \log( 2p^2 / (\delta / 3) ) $, we have with probability $1-\delta/3$,
 \begin{align*}
 \left\| \sum_{i=1}^n  \epsilon^{(i)}   (X^{(i)} - E(X) )^T \right\|_\infty\leq \sqrt{ 4 T^2 U^2 \sigma^ 2 n \log( 6 p^2 / \delta  )} .
 \end{align*}
 Applying again Lemma \ref{lemma:bernstein}, to the sequence $(\epsilon^{(i)})_{i=1,\ldots, n}$, taking $v = \sigma^2$ and $ c = U$ and using $9\sigma^2 n\geq 4 U ^2\log( 2p^2 / (\delta / 3) )  \geq 4 U ^2\log( 2p T / (\delta / 3) ) $, we deduce that with probability $1-\delta/3$,
\begin{align*}
\|  \overline{\epsilon }^n \|_\infty \leq \sqrt{ 4\sigma^ 2  \log( 6 p T / \delta  ) / n } .
\end{align*} 
Applying again Lemma \ref{lemma:bernstein}, to the sequence $(X^{(i)} - EX )_{i=1,\ldots, n}$, taking $v = U^2$ and $ c =U$ and using $9  n\geq 4 (U^2/\sigma^2) \log( 2p^2 / (\delta / 3) ) \geq 4  \log( 2p T / (\delta / 3) ) $, we deduce that with probability $1-\delta/3$
\begin{align*}
\| \overline{X}^{n} - E(X)  \|_\infty \leq \sqrt{ 4 U^ 2  \log( 6 p T / \delta  ) / n}.
\end{align*}
 All this together with the fact that $T\leq p$, we get
 \begin{align*}
&\left\|\sum_{i=1}^n (\epsilon  ^{(i)} - \overline{\epsilon}^n )  (X^{(i)} - \overline{X}^n )^T  \right\|_\infty \\
&\leq 
\sqrt{ 4 T^2 U^2 \sigma^ 2 n \log( 6 p^2 / \delta  )} + T \sqrt{ 4\sigma^ 2  \log( 6 p^2 / \delta  )  4 U^ 2  \log( 6 p^2 / \delta  ) }  \\
&= \sqrt{ 4 T^2 U^2 \sigma^ 2 n \log( 6 p^2 / \delta  )} \left( 1 + \sqrt{   4   \log( 6 p^2 / \delta  ) / n  } \right),
\end{align*}
and the conclusion follows from $ n\geq   4 (U^2/\sigma^2)    \log( 6 p^2 / \delta  )  \geq   4   \log( 6 p^2 / \delta  )   $.
\end{proof}

\begin{lemma}\label{lemma:unif_sigma}
Suppose that Assumptions \ref{ass_1}, \ref{ass_2}  and \ref{ass_4} are fulfilled. If $T\leq p$ and $n \geq  \log(4p^2 / \delta )$   we have with probability $1-\delta$:
\begin{align*}
\| \hat \Sigma - \Sigma \|_\infty \leq & 8 T U^2 \sqrt{  \log( 4p^2/ \delta )  /n },
\end{align*}
where $U$ is such that $ \max _{k\in S, \, t\in \mathcal T} | X_{k,t} - E(X_{k,t} )  |\leq U$ almost surely.
\end{lemma}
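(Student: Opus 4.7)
\noindent\textbf{Proof plan for Lemma \ref{lemma:unif_sigma}.} The strategy is to separate the deviation into a ``de-biased'' sample covariance piece plus a correction coming from the use of $\overline X^n$ in place of $E(X)$, and then to control each piece entry-wise via Bernstein's inequality followed by a union bound over the $p^2$ entries.

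First I would write
\begin{align*}
\hat\Sigma \;=\; \widetilde\Sigma \;-\; (\overline X^n-E(X))(\overline X^n-E(X))^T,\qquad \widetilde\Sigma := n^{-1}\sum_{i=1}^n (X^{(i)}-E(X))(X^{(i)}-E(X))^T,
\end{align*}
so that $\|\hat\Sigma-\Sigma\|_\infty \le \|\widetilde\Sigma-\Sigma\|_\infty + \|(\overline X^n-E(X))(\overline X^n-E(X))^T\|_\infty$. For the first piece, for every pair $(k,\ell)\in\mathcal S^2$ the entry $(\widetilde\Sigma-\Sigma)_{k,\ell}$ is the mean of $n$ i.i.d.\ centered random variables of the form $Z^{(i)}_{k,\ell} := \sum_{t=1}^T (X^{(i)}_{k,t}-E X_{k,t})(X^{(i)}_{\ell,t}-E X_{\ell,t}) - \Sigma_{k,\ell}$. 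Under Assumption \ref{ass_4} each summand is bounded by $TU^2$ in absolute value and has variance bounded by $T^2U^4$ (via Cauchy--Schwarz on the $T$-sum). I would apply Lemma \ref{lemma:bernstein} with $c=2TU^2$ and $v=T^2U^4$, take a union bound over the $p^2$ entries, and obtain, for $n\ge \log(4p^2/\delta)$,
\begin{align*}
\|\widetilde\Sigma-\Sigma\|_\infty \;\le\; 4TU^2\sqrt{\log(4p^2/\delta)/n}
\end{align*}
with probability at least $1-\delta/2$, after absorbing the linear-in-$1/n$ Bernstein term into the square-root term using the sample-size condition.

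Next I would control the second piece. Since $[(\overline X^n-E(X))(\overline X^n-E(X))^T]_{k,\ell} = \sum_{t=1}^T(\overline X^n_{k,t}-EX_{k,t})(\overline X^n_{\ell,t}-EX_{\ell,t})$, the entry-wise triangle inequality gives
\begin{align*}
\|(\overline X^n-E(X))(\overline X^n-E(X))^T\|_\infty \;\le\; T\,\|\overline X^n-E(X)\|_\infty^{\,2}.
\end{align*}
A direct Bernstein-plus-union-bound argument on the coordinates $(k,t)\in\mathcal S\times\mathcal T$ of $\overline X^n-E(X)$ (using $T\le p$ so $\log(2pT/\delta)\le\log(4p^2/\delta)$) yields $\|\overline X^n-E(X)\|_\infty\le 2U\sqrt{\log(4p^2/\delta)/n}$ with probability at least $1-\delta/2$. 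Hence the contribution of this second piece is of order $TU^2\log(4p^2/\delta)/n$, which under the hypothesis $n\ge\log(4p^2/\delta)$ is dominated by the square-root rate and absorbed into the constant. Summing the two pieces and choosing the constants gives the claimed bound $8TU^2\sqrt{\log(4p^2/\delta)/n}$ with probability at least $1-\delta$.

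The only mildly delicate step is the bookkeeping of the sample-size condition: one has to verify that the linear Bernstein term $cTU^2\log(4p^2/\delta)/n$ is indeed controlled by the sub-Gaussian term $TU^2\sqrt{\log(4p^2/\delta)/n}$, which is exactly what $n\ge\log(4p^2/\delta)$ provides. Everything else is a routine application of the scalar Bernstein inequality already used in Lemma \ref{lemma:unif_epsilon}, combined with a union bound over at most $p^2$ (resp.\ $pT$) entries.
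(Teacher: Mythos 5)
Your proposal is correct and follows essentially the same route as the paper: the identical decomposition of $\hat\Sigma-\Sigma$ into the centered empirical covariance term plus the rank-one correction $(\overline X^n-E(X))(\overline X^n-E(X))^T$, entry-wise Bernstein bounds with union bounds over $p^2$ (resp.\ $pT$, using $T\le p$) entries, and absorption of the linear $\log(4p^2/\delta)/n$ term into the square-root rate via the hypothesis $n\ge\log(4p^2/\delta)$. The only cosmetic difference is your slightly sharper variance proxy $T^2U^4$ in place of the paper's $(2TU^2)^2$, which does not change the argument.
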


\begin{proof}
We have
\begin{align*}
\hat \Sigma - \Sigma = n^{-1}  \sum_{i=1} ^n \left\{(X^{(i)} - E(X) )  (X^{(i)}  - E(X))^T   - \Sigma \right\} - (E(X) - \overline{X}^n)  (E(X) - \overline{X}^n) ^T.
\end{align*}
In the following, we bound each of the two previous terms under events of probability $1-\delta/2$, respectively. Apply Lemma \ref{lemma:bernstein} with $A^{(i)} = (X^{(i)} - E(X) )  (X^{(i)}  - E(X))^T   - \Sigma$. Note that
\begin{align*}
|A^{(1)}_{k,\ell}| &\leq 2 | (X^{(1)}_{\{k,\cdot\}} - E(X_{\{k,\cdot\}}) )^T  (X_{\{\ell,\cdot\}}^{(1)}  - E(X_{\{\ell,\cdot\}}))| \\
& = 2 \left| \sum_{t=1}^T (X^{(1)}_{k,t} - E(X_{k,t} ) )  (X_{\ell,t}^{(1)}  - E(X_{\ell,t})) \right|\\
&\leq 2 T U^2.
\end{align*}
and
\begin{align*}
\var (A^{(1)}_{k,\ell} ) & \leq E\left[ \left( (X^{(1)}_{\{k,\cdot\}} - E(X_{\{k,\cdot\}} ) )^T  (X_{\{\ell,\cdot\}}^{(1)}  - E(X_{\{\ell,\cdot\}}))\right)^2 \right]   \leq  (2 T U^2)^2.
\end{align*}
 Hence we apply Lemma \ref{lemma:bernstein} with $v=(2 T U^2)^2$ and $c = 2 T U^2$. We obtain, because $  9 n \geq  4 \log(4p^2 / \delta ) $, with probability $1-\delta/2$,
\begin{align*}
\left\| n^{-1}  \sum_{i=1} ^n \left\{(X^{(i)} - E(X) )  (X^{(i)}  - E(X))^T   - \Sigma \right\}   \right\|_\infty \leq 4 T U^2 \sqrt{  \log( 2p^2/ (\delta/2) )  /n }.
\end{align*}
Note that
\begin{align*}
((E(X) - \overline{X}^n)  (E(X) - \overline{X}^n) ^T)_{k,\ell}  = \sum_{t=1} ^T (E(X_{k,t}) - \overline{X_{k,t}}^n)  (E(X_{\ell,t}) - \overline{X_{\ell,t}}^n)  .  
\end{align*}
It follows that
\begin{align*}
\left| ((E(X) - \overline{X}^n)  (E(X) - \overline{X}^n) ^T)_{k,\ell} \right| \leq T \max_{t=1,\ldots, T} \max_{k=1,\ldots, p}   | E(X_{k,t}) - \overline{X_{k,t}}^n|^2   .  
\end{align*}
Applying Lemma \ref{lemma:bernstein} with $A^{(i)} = X^{(i)}  - E(X) $, $c = U $, and $v = U^2$ we get, because $9  n \geq 4 \log(2pT / (\delta/2)$, with probability $1-\delta/2$,
\begin{align*}
\max_{t=1,\ldots, T} \max_{k=1,\ldots, p}   | E(X_{k,t}) - \overline{X_{k,t}}^n|\leq  \sqrt{ 4  U^2 \log(2pT/(\delta/2)) / n }.
\end{align*}
Hence we have shown that, with probability $1-\delta$,
\begin{align*}
\| \hat \Sigma - \Sigma \|_\infty &\leq 4 T U^2 \sqrt{  \log( 4p^2/ \delta )  /n } +   4 T  U^2 \log(4pT/\delta) / n \\
&\leq 4T U^2 \sqrt{  \log( 4p^2/ \delta )  /n } ( 1 +  \sqrt{  \log(4p^2/\delta) / n} ).
\end{align*}
Invoking that $n \geq  \log(4p^2/\delta)$ we obtain the result.
\end{proof}

\section{Auxiliary results}\label{sec:aux}

\begin{lemma}[Bernstein inequality]\label{lemma:bernstein}
Suppose that $(A^{(i)})_{i\geq 1}$ is an iid sequence of centered random vectors valued in $\mathbb R^{p\times T}$. Suppose that $\|A^{(1)}\|_\infty \leq c $ a.s. and that $\var(A^{(1)}_{k,\ell}) \leq  v $.
For any $\delta \in (0,1) $, $n\geq 1$ such that $  9  {vn}  \geq     4  c^2  \log( 2 pT / \delta )   $, we have with probability $1-\delta$:
\begin{align*}
\left\|\sum_{i=1}^n A^{(i)} \right\|_\infty\leq  \sqrt { 4 n v \log( 2 pT / \delta )  }.
\end{align*}
\end{lemma}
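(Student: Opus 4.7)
The plan is to reduce to the scalar Bernstein inequality coordinate by coordinate and finish with a union bound over the $pT$ entries of the matrix. For each fixed index $(k,\ell)\in\{1,\ldots,p\}\times\{1,\ldots,T\}$, the sequence $(A^{(i)}_{k,\ell})_{1\le i\le n}$ is i.i.d., centered, almost surely bounded by $c$ in absolute value, and has variance at most $v$. Thus the classical Bernstein inequality for scalar sums yields, for every $t>0$,
\begin{align*}
\mathbb{P}\!\left(\left|\sum_{i=1}^n A^{(i)}_{k,\ell}\right|>t\right)\leq 2\exp\!\left(-\frac{t^2}{2nv+\tfrac{2}{3}ct}\right).
\end{align*}

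Next I would choose $t=\sqrt{4nv\log(2pT/\delta)}$ and show that under the hypothesis $9vn\ge 4c^2\log(2pT/\delta)$ the right-hand side above is bounded by $\delta/(pT)$. The key algebraic point is that the Bernstein denominator splits into a variance term $2nv$ and a bounded-range term $\tfrac{2}{3}ct$; the chosen $t$ makes the variance term dominate precisely when $2nv\ge \tfrac{2}{3}ct$, i.e.\ $9n^2v^2\ge 4nvc^2\log(2pT/\delta)$, which is exactly the assumption of the lemma. Under that regime the exponent is at least $t^2/(4nv)=\log(2pT/\delta)$, giving the per-coordinate tail bound $\delta/(pT)$.

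Finally, a union bound over the $pT$ coordinates of $\sum_{i=1}^n A^{(i)}$ transfers the per-coordinate estimate to the $\ell_\infty$ norm of the matrix, producing the claimed inequality with probability at least $1-\delta$. There is no real obstacle here: the only care needed is the verification that the chosen $t$ lies in the sub-Gaussian regime of Bernstein's bound, and that verification is precisely what the quantitative lower bound on $n$ in the hypothesis encodes.
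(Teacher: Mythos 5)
Your proposal is correct and follows essentially the same route as the paper: apply the scalar Bernstein inequality to each of the $pT$ coordinates, choose $t=\sqrt{4nv\log(2pT/\delta)}$, use the hypothesis $9vn\ge 4c^2\log(2pT/\delta)$ to ensure the variance term dominates the denominator (so the exponent is at least $\log(2pT/\delta)$), and conclude by a union bound. No gaps.
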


\begin{proof}
By Bernstein inequality, it follows that
\begin{align*}
\mathbb P \left( \left|\sum_{i=1}^n  A^{(i)}_{k,t} \right| >x\right) &\leq 2 \exp\left(  - \frac{x^2}{2 ( v n +  c x / 3)}  \right) .
\end{align*}
Choosing $x= \sqrt { 4 n v \log( 2 pT / \delta )  } $ and because $v n \geq    c x / 3$, we get that
\begin{align*}
\mathbb P \left( \left|\sum_{i=1}^n A^{(i)}_{k,t} \right| >x \right) &\leq \delta / (pT).
\end{align*}
Use the union bound to get that, for any $t>0$,
\begin{align*}
\mathbb P \left( \left\|\sum_{i=1}^n A^{(i)}  \right\|_\infty > x \right) \leq \sum_{1\leq k,t\leq p} \mathbb P \left( \left|\sum_{i=1}^n A^{(i)}_{k,t}\right| >x \right) \leq \delta .
\end{align*}

\end{proof}

\begin{lemma}[Matrix Bernstein inequality]
    \label{lemma:matrix_bernstein}
    Let $S^{(1)}, \ldots, S^{(n)}$ be independent, centered random matrices with common dimension $d_{1} \times d_{2},$ and assume that each one is uniformly bounded in spectral norm, i.e.,
    $$ E[S^{(i)}]=0 \text { and }\left\|S^{(i)}\right\| \leq L \quad \text { for each } i=1, \ldots, n. $$
    Let $v>0$ be such that
    $$\max \left\{\left\|\sum_{i=1}^{n} E\left({S}^{(i)} {S}^{(i)T}\right)\right\|,\left\| \sum_{i=1}^{n} E\left({S}^{(i)T} {S}^{(i)}\right)\right\|\right\}\leq v.$$
    Then, for all  $t \geq 0,$
    $$ \mathbb P \left(  \|\sum\limits_{i=1}^n S^{(i)}\| \geq t \right) \leq  \left(d_{1}+d_{2}\right) \exp \left(\frac{-t^{2} / 2}{v+L t / 3}\right). $$  
\end{lemma}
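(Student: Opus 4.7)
The plan is to follow the standard Laplace-transform-plus-Lieb-concavity route (the ``user-friendly'' argument of Tropp). Since the $S^{(i)}$ are rectangular, I would first symmetrize by the Hermitian dilation
\begin{align*}
H_i \;=\; \mathcal H(S^{(i)}) \;=\; \begin{pmatrix} 0 & S^{(i)} \\ S^{(i)T} & 0 \end{pmatrix} \in \mathbb R^{(d_1+d_2)\times(d_1+d_2)}.
\end{align*}
Two algebraic identities make this a clean reduction: $\|\mathcal H(S)\| = \|S\|$ (so $\|H_i\|\le L$ and $\|\sum_i H_i\| = \lambda_{\max}(\sum_i H_i)$, since $\mathcal H(S)$ is Hermitian and its spectrum is symmetric about $0$), and $\mathcal H(S)^2 = \mathrm{blkdiag}(SS^T, S^TS)$, so
$\bigl\|\sum_i E[H_i^2]\bigr\| = \max\bigl(\|\sum_i E[S^{(i)}S^{(i)T}]\|,\|\sum_i E[S^{(i)T}S^{(i)}]\|\bigr)\le v$.
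Thus it suffices to prove $P(\lambda_{\max}(\sum_i H_i)\ge t)\le d\cdot \exp\bigl(-\tfrac{t^2/2}{v+Lt/3}\bigr)$ for centered Hermitian $H_i$ with $\|H_i\|\le L$ and variance proxy $v$, where $d=d_1+d_2$.

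For such Hermitian sums, the matrix Markov inequality applied to the operator exponential gives, for every $\theta>0$,
\begin{align*}
P\Bigl(\lambda_{\max}\bigl(\textstyle\sum_i H_i\bigr)\ge t\Bigr) \;\le\; e^{-\theta t}\, E\bigl[\mathrm{tr}\,\exp(\theta\textstyle\sum_i H_i)\bigr].
\end{align*}
The key analytic step is the \emph{subadditivity of the matrix cumulant generating function}: by Lieb's concavity theorem (applied to $A\mapsto \mathrm{tr}\,\exp(C+\log A)$) and iterated conditioning over the independent $H_i$, one obtains the master inequality
\begin{align*}
E\bigl[\mathrm{tr}\,\exp\bigl(\theta\textstyle\sum_i H_i\bigr)\bigr] \;\le\; \mathrm{tr}\,\exp\Bigl(\textstyle\sum_i \log E[e^{\theta H_i}]\Bigr).
\end{align*}

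It remains to bound each matrix mgf. Since $\|H_i\|\le L$ and $E[H_i]=0$, a scalar Taylor estimate applied spectrally (using $e^{\theta x}\le 1+\theta x + \tfrac{\theta^2 x^2/2}{1-\theta L/3}$ for $|x|\le L$, $0<\theta<3/L$) and the operator-monotonicity of $\log$ give
\begin{align*}
\log E[e^{\theta H_i}] \;\preceq\; \frac{\theta^2/2}{1-\theta L/3}\, E[H_i^2].
\end{align*}
Summing, using that $\lambda_{\max}$ is monotone in the PSD order, and bounding the trace by $d$ times $\exp\circ\lambda_{\max}$, I obtain
\begin{align*}
P\bigl(\lambda_{\max}(\textstyle\sum_i H_i)\ge t\bigr) \;\le\; d\cdot\exp\!\Bigl(-\theta t + \tfrac{\theta^2 v/2}{1-\theta L/3}\Bigr).
\end{align*}
Optimizing in $\theta\in(0,3/L)$ with the standard Bernstein choice $\theta = t/(v+Lt/3)$ yields the stated bound. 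The main obstacle is the Lieb concavity step: independence alone is insufficient because $e^{\theta\sum_i H_i}\neq \prod_i e^{\theta H_i}$ in the noncommutative setting, so without Lieb's theorem one cannot factorize the matrix mgf and the scalar Bernstein argument does not transfer. Everything else (the dilation reduction, the scalar Taylor bound, the Bernstein optimization) is bookkeeping once that inequality is in hand.
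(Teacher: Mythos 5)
The paper never proves this lemma: it is stated in the ``Auxiliary results'' appendix as a known, citation-level result --- it is Tropp's matrix Bernstein inequality for rectangular matrices --- and is only ever used as a black box (in the proof of Lemma \ref{lemma:eigenvalues}). Your proposal is therefore not comparable to any in-paper argument, but it is a correct reconstruction of the standard proof from the literature. The key points all check out: the Hermitian dilation is linear, so $\bigl\|\sum_i S^{(i)}\bigr\| = \lambda_{\max}\bigl(\sum_i H_i\bigr)$, and the symmetry of the dilated spectrum is what lets you get away with a one-sided $\lambda_{\max}$ tail bound and the factor $d_1+d_2$ rather than $2(d_1+d_2)$; the identity $\mathcal H(S)^2 = \mathrm{blkdiag}(SS^T,\,S^TS)$ merges the two variance conditions into the single bound $\bigl\|\sum_i E[H_i^2]\bigr\|\le v$; Lieb's concavity theorem is indeed the one non-elementary ingredient that substitutes for the mgf factorization $e^{\theta\sum_i H_i}=\prod_i e^{\theta H_i}$, which fails for noncommuting matrices; and your choice $\theta = t/(v+Lt/3)$ gives $1-\theta L/3 = v/(v+Lt/3)$, hence exponent $-\theta t + \tfrac{\theta^2 v/2}{1-\theta L/3} = -\tfrac{t^2/2}{v+Lt/3}$, exactly the stated bound. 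If this were written out in full, the two steps you invoke but do not prove --- Lieb's theorem itself, and the iterated-conditioning derivation of the master inequality $E[\mathrm{tr}\exp(\theta\sum_i H_i)] \le \mathrm{tr}\exp\bigl(\sum_i \log E[e^{\theta H_i}]\bigr)$ --- are precisely the parts that explain why papers, including this one, cite this result rather than reprove it; as a blind reconstruction of how the proof goes, your outline is sound.
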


\end{appendices}

\end{document}